\documentclass[a4paper, 10pt]{amsart}
\usepackage{amsthm}
\usepackage[]{amsmath}
\usepackage{amssymb}
\usepackage{enumerate}
\usepackage{tabularx}
\usepackage[]{color}
\usepackage[left=2.2cm, right=2.2cm, top=2.5cm, bottom=2.5cm]{geometry}
\usepackage[colorlinks]{hyperref}
\usepackage{tikz}
\usepackage{multirow}
\usepackage{subcaption}
\usepackage{algorithm}
\usepackage{algorithmic}
\usepackage{xcolor}

\newcommand{\bm}[1]{\boldsymbol{#1}}

\newcommand{\lj}{[ \hspace{-2pt} [}
\newcommand{\rj}{] \hspace{-2pt} ]}
\newcommand{\mb}[1]{\mathbb{#1}}
\newcommand{\mc}[1]{\mathcal{#1}}
\newcommand{\mr}[1]{\mathrm{#1}}
\newcommand{\jump}[1]{\lj #1 \rj}
\newcommand{\aver}[1]{ \{#1\}  }
\newcommand{\wt}[1]{ \widetilde{ #1}}

\newcommand{\DGnorm}[1]{ \| #1\|_{\mr{DG}}}
\newcommand{\DGenorm}[1]{ |\!|\!| #1 |\!|\!|}
\newcommand{\parn}[1]{ \partial_{\un} #1 }
\newcommand{\circi}[1]{ {#1}_{\circ}^i}
\renewcommand{\d}[1]{\mathrm d \boldsymbol{#1}}

\def\curl{\ifmmode \mathrm{curl} \else \text{curl}\fi}
\def\Curl{\ifmmode \mathrm{Curl} \else \text{Curl}\fi}
\def\div{\ifmmode \mathrm{div} \else \text{div}\fi}
\def\Div{\ifmmode \mathrm{Div} \else \text{Div}\fi}
\def\dim{\ifmmode \mathrm{dim} \else \text{dim}\fi}
\def\MTh{\mc{T}_h}
\def\MEh{\mc{E}_h}
\def\MThG{\mc{T}_h^{\Gamma}}
\def\MThB{\mc{T}_h^{\backslash \Gamma}}
\def\MEhG{\mc{E}_h^{\Gamma}}
\def\MEhB{\mc{E}_h^{\backslash \Gamma}}
\def\MThBO{\mc{T}_h^{1, \circ}}
\def\MThBZ{\mc{T}_h^{0, \circ}}
\def\MThBI{\mc{T}_h^{i, \circ}}
\def\lap{\Delta}

\def\un{\bm{\mr n}}

\newcommand\comment[1]{}

\newcommand\substitute[2]{#2}

\newtheorem{assumption}{Assumption}
\newtheorem{theorem}{Theorem}
\newtheorem{lemma}{Lemma}

\newtheorem{remark}{Remark}

\definecolor{orange}{rgb}{1, 0.5, 0}

\allowdisplaybreaks[4]

\title[Biharmonic Interface Problem]{An arbitrary order Reconstructed
Discontinuous Approximation to Biharmonic Interface Problem \\ 
{\scriptsize This paper is dedicated to the Prof. Zhongci Shi}}

\author[Y. Chen]{Yan Chen} \address{School of Mathematical
Sciences, Peking University, Beijing 100871, P.R. China}
\email{yanc@stu.pku.edu.cn}

\author[R. Li]{Ruo Li} \address{CAPT, LMAM and School of Mathematical
Sciences, Peking University, Beijing 100871, P.R. China}
\email{rli@math.pku.edu.cn}

\author[Q.-C. Liu]{Qicheng Liu} \address{School of Mathematical
Sciences, Peking University, Beijing 100871, P.R. China}
\email{qcliu@pku.edu.cn}


\begin{document}

\maketitle

\begin{abstract}
We present an arbitrary order discontinuous Galerkin finite element
method for solving the biharmonic interface problem on the unfitted 
mesh. The approximation space is constructed by a patch reconstruction 
process with at most one degree freedom per element. The discrete 
problem is based on the symmetric interior penalty method and the 
jump conditions are weakly imposed by the Nitsche's technique. 
The $C^2$-smooth interface is allowed to intersect elements in a very 
general fashion and the stability near the interface is naturally 
ensured by the patch reconstruction. We prove the optimal 
\emph{a priori} error estimate under the energy norm and the $L^2$ 
norm. Numerical results are provided to verify the theoretical 
analysis.
\noindent \textbf{keywords}: biharmonic interface problem, 
patch reconstruction,discontinuous Galerkin method
\end{abstract}

\section{Introduction}
\label{sec_introduction}
We are concerned in this paper with the biharmonic interface problem.
The biharmonic operator is a fourth-order elliptic operator which is
frequently seen in the thin plate bending problem and the ions 
transport and distribution problem
\cite{Harari2012embeded, ZienkiewiczTaylorZhu:2015,Liu2023PNPB}. 
Recently, there are many successful finite element methods proposed 
and applied to solve biharmonic problems, see e.g. \cite{Hansbo:2002,
Mozolevski2007hp, cockburn2009hybridizable,
Burman2020cut,Li2019biharmonic}. The biharmonic interface problem
arises in the context of composite materials where the physical 
domain is separated by an interface and the coefficient is 
discontinuous across this interface. Some efforts have been made to 
address this kind of problem
\cite{Nicaise1994biharmonic,Lin2011immersed,Burman2020cut,
Cai2021Nitsche,Cai2023Nitsche}. 

The finite element methods for interface problems can be classified
into two categories: body-fitted methods and unfitted methods. 
Body-fitted methods are constructed on a mesh aligned with the 
interface. This type of methods are naturally suited to to deal with 
the discontinuity on the interface. However, generating a 
high-quality body-fitted mesh can be a nontrivial and time-consuming 
task \cite{Li2020interface, Wu2012unfitted, Liu2020interface}. In 
unfitted methods, the mesh is independent of the interface, and the 
interface can intersect elements in a very general way. As a result, 
unfitted methods have gained more attention for solving the interface 
problem.

In 2002, A. Hansbo and P. Hansbo proposed a Nitsche extended finite
element method (XFEM) for solving the two-order elliptic interface
problem. The idea of this method involves constructing two separated 
finite element spaces on both sides of the interface and using 
Nitsche’s types of penalty to weakly impose the jump conditions. 
Unfitted finite element methods based on this idea are sometimes
referred to as cut finite element methods (CutFEMs). Since then, these 
methods have been further developed and applied to a range of 
interface problems, including Stokes interface problems, 
H(div)-and H(curl)-elliptic interface problems, elasticity interface 
problems, and so on. We refer to \cite{Burman2015cutfem, 
Gurkan2019stabilized, Hansbo2014cut,He2022stabilized,Han2023interface,
Li2023reconstructed, Liu2020interface} and the references therein 
for recent advances.

For the biharmonic interface problem, Y. Cai and et al. proposed two
Nitsche-XFEMs in \cite{Cai2021Nitsche} and \cite{Cai2023Nitsche}. In
\cite{Cai2021Nitsche}, the authors used the so-called modified Morley
finite element to approximate the solution near the interface and
proved an optimal \emph{a priori} error estimate under the energy norm. 
In \cite{Cai2023Nitsche}, they derived a mixed method based on the 
Ciarlet–Raviart formulation with $(P_2, P_2)$ finite element. Due to 
the high order of the biharmonic operator, it is hard to implement a
high-order conforming space. Therefore, we aim to use the
discontinuous Galerkin (DG) method to obtain the numerical solution to
the biharmonic interface problem.

In this paper, we propose an arbitrary order discontinuous Galerkin 
CutFEM for solving the biharmonic problem with a $C^2$-smooth 
interface. The method is based on a reconstructed approximation space
that is constructed by a patch reconstruction procedure. This approach
creates an element patch for each element and solves a local least 
squares fitting problem to obtain a local high-order polynomial
\cite{Li2020interface, Li2016discontinuous, Li2019reconstructed,
Li2019sequential}. Using this new
space, we design the discrete scheme for the biharmonic problem under
the symmetric interior penalty discontinuous Galerkin (IPDG) framework. 

In penalty methods based on unfitted meshes, the small cuts around
the interface may adversely affect the stability of the discrete
system and hamper the convergence. Some stabilized strategies have to
be applied to cure the effects, such as the ghost penalty method and
the extended method, see \cite{Johansson2013high,
Burman2021unfitted, Huang2017unfitted, Burman2010ghost,
Burman2021cutfem, Yang2022an} for some examples.
Benefiting from the reconstruction, we can achieve arbitrary 
approximation accuracy with only one degree of freedom per interior 
element. Besides, we allow the interface to intersect the element in a 
very general way, and the stability in the cut element is ensured
naturally by selecting a proper element patch for it without any extra
stabilized method.

We prove the optimal convergence rates under
the energy norm the $L^2$ norm, respectively. Numerical experiments
are conducted to verify the theoretical analysis and show that our 
algorithm is simple to implement and can reach high-order accuracy.

The rest of this paper is organized as follows. In Section
\ref{sec_preliminaries}, we introduce the biharmonic interface problem
and give the basic notations about the Sobolev spaces and the
partition. We also recall two commonly used inequalities in this
section. In Section \ref{sec_space}, we establish the reconstruction
operator and the corresponding approximation space. Some basic
properties of the reconstruction are also proven in this section. In
Section \ref{sec_interface_problem}, we define the discrete
variational form for the interface problem and analyse the error
under the energy norm and the $L^2$ norm. In Section
\ref{sec_numericalresults}, we carried out some numerical examples to
validate our theoretical results and show high-order accuracy of our
method. Finally, a brief conclusion is given in Section
\ref{sec_conclusion}.


\section{preliminaries}
\label{sec_preliminaries}
Let $\Omega \subset \mb{R}^d (d = 2, 3)$ be a bounded polygonal
(polyhedral) domain with a Lipschitz boundary $\partial \Omega$. Let
$\Gamma$ be a $C^2$-smooth interface that divides the domain $\Omega$ 
into two subdomains $\Omega_0$ and $\Omega_1$, $\Gamma = \partial
\Omega_0 \cap \partial \Omega_1$, see Fig.~\ref{fig_interface_domain}
for an illustration.
\begin{figure}[htp]
  \centering
  \begin{minipage}[t]{0.46\textwidth}
    \begin{center}
      \begin{tikzpicture}
        \draw[thick] (-2.1, -1.5) rectangle (2.1,1.5);
        \draw[thick, fill=white] (0,0) circle [radius =1];
        \node at(0,0) {$\Omega_0$};
        \node at(1.5, 0.8) {$\Omega_1$};
        \draw[thick, ->] (-0.8,0.6) -- (-1.3,1.1);
        \node at(-0.9, 1) {$\un$};
        \node[left] at(-1,0) {$\Gamma$};
        \node[right] at (2.1, 0) {$\Omega$};
      \end{tikzpicture}
    \end{center}
  \end{minipage}
  \begin{minipage}[t]{0.46\textwidth}
    \begin{center}
      \begin{tikzpicture}[scale=1.2]
        \draw[thick] (0, 0) -- (2, 0) -- (2, 2) -- (0, 2) -- (0, 0);
        \draw[thick] (0, 2) -- (0.6, 2.7) -- (2.6, 2.7) -- (2, 2);
        \draw[thick] (2, 0) -- (2.6, 0.7) -- (2.6, 2.7);
        \draw[thick, dashed] (0, 0) -- (0.6, 0.7) -- (2.6, 0.7);
        \draw[thick, dashed] (0.6, 0.7) -- (0.6, 2.7);
        \draw[thick] (1.3, 1.3) circle [radius =0.5];
        \draw[thick] (0.8, 1.3) to [out=-35, in = 215] (1.8, 1.3);
        \draw[thick, dashed] (0.8, 1.3) to [out=35, in = 145] (1.8, 1.3);
        \node[right] at (2.6, 1.7) {$\Omega$};
        \node[below] at (1.3, 0.85) {$\Gamma$};
      \end{tikzpicture}
    \end{center}
  \end{minipage}
  \caption{The sample domain for $d=2$ (left) / $d=3$ (right).}
  \label{fig_interface_domain}
\end{figure}
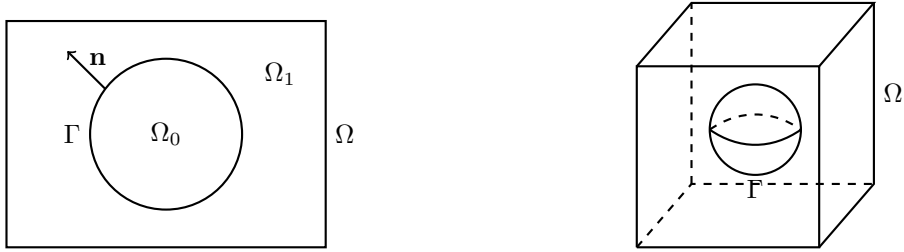
In this paper, we consider the following biharmonic interface problem
\begin{equation}
 \left\{
  \begin{aligned}
    &\lap(\beta \lap u) = f, \quad \text{ in } \Omega_0 \cup \Omega_1, 
    \\
    & u = g_1, \,\, \parn{u} = g_2, \quad \text{ on } \partial \Omega,
    \\
    & \jump{u} = a_1 \un_0,\,\, \jump{\nabla u} = a_2, \\
    & \jump{\beta \lap u} = a_3 \un_0, \quad \qquad \qquad 
    \text{ on } \Gamma, \\
    & \jump{\nabla(\beta \lap u)} = a_4, 
  \end{aligned}
 \right.
 \label{eq_interface}
\end{equation}
where $\beta$ is a positive constant function on $\Omega$ which may be
discontinuous across the interface $\Gamma$. $\jump{\cdot}$ denotes 
the jump operator, which is defined as
\begin{equation}
  \begin{aligned}
\jump{v} &:= v|_{\Omega_0} \un_0 + v|_{\Omega_1} \un_1, \quad \text{ for
scalar-valued function}, \\ 
\jump{\bm{q}} &:= \un_0 \cdot \bm{q}|_{\Omega_0} + 
\un_1 \cdot \bm{q}|_{\Omega_1}, \quad \text{for vector-valued
function},
\end{aligned}
\label{eq_jumpopG}
\end{equation}
where $\un_0$ and $\un_1$ denotes the unit normal on $\Gamma$
orienting from $\Omega_0$ towards $\Omega_1$ and $\Omega_1$ towards
$\Omega_0$, respectively.

Given a bounded domain $D \subset \Omega$, we follow the standard
definitions to the space $L^2(D), L^2(D)^d$, the spaces $H^q(D),
H^q(D)^d$ with the regular exponent $q \geq 0$. For $D_0, D_1 \subset
\mb{R}^d$, we define the Sobolev space $H^q(D_0 \cup D_1)$ as
functions in $D_0 \cup D_1$ such that $u|_{D_i} \in H^q(D_i), \,\,
i=0,1$, with the norm and seminorm
\begin{displaymath}
  \| \cdot \|_{H^q(D_0 \cup D_1)} := \left( \sum_{i=0,1} \| \cdot
  \|_{H^q(D_i)}^2 \right)^{1/2}, \,\, | \cdot |_{H^q( D_0 \cup D_1)} 
  := \left( \sum_{i=0,1} | \cdot |_{H^q(D_i)}^2 \right)^{1/2},
\end{displaymath}
respectively. We assume that under some regular conditions of the data 
$f, g_1, g_2, a_1, a_2, a_3$ and $a_4$, the interface problem 
\eqref{eq_interface} admits an unique solution in 
$H^4(\Omega_0 \cup \Omega_1)$. We refer to \cite{Blum1980boundary} for
detailed regularity results.

We denote by $\MTh$ a regular and quasi-uniform partition $\Omega$
into disjoint open triangles (tetrahedra). The grid is not required to
be fitted to the interface. Let $\MEh$ denote the set of all $d-1$
dimensional faces of $\MTh$, and we decompose $\MEh$ into $\MEh = 
\MEh^\circ \cup \MEh^b$, where $\MEh^\circ$ and $\MEh^b$ are the 
sets of interior faces and boundary faces, respectively. We let 
\begin{displaymath}
  h_K := \text{diam}(K), \quad \forall K \in \MTh, \quad h_e :=
  \text{diam}(e), \quad \forall e \in \MEh, 
\end{displaymath}
and define $h := \max_{K \in \MTh} h_K$.
The quasi-uniformity of the mesh $\MTh$ is in the sense that there
exists a constant $\nu > 0$ such that $h \leq \nu \min_{K \in
\MTh} \rho_K$, where $\rho_K$ is the diameter of the
largest ball inscribed in $K$. One can get the inverse inequality and
the trace inequality from the regularity of the mesh, which are
commonly used in the analysis.
\begin{lemma}
  There exists a constant $C$ independent of the mesh size $h$, such that
  \begin{equation}
    \| v \|^2_{L^2(\partial K)} \leq C \left( h_K^{-1} \| v 
    \|^2_{L^2(K)} + h_K \| \nabla v \|_{L^2(K)}^2 \right), \quad
    \forall v \in H^1(K).
    \label{eq_trace}
  \end{equation}
  \label{le_trace}
\end{lemma}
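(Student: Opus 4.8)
The statement to prove is the standard trace inequality, Lemma~\ref{le_trace}. Here is how I would approach it.

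\medskip

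The plan is to reduce the estimate on a general element $K$ to the corresponding estimate on a fixed reference element $\wh{K}$ via an affine change of variables, and to establish the reference estimate by a trace theorem. First I would invoke the regularity (shape-regularity and quasi-uniformity) of $\MTh$: for each $K \in \MTh$ there is an affine map $F_K : \wh{K} \to K$, $\wh{\bm x} \mapsto B_K \wh{\bm x} + \bm b_K$, with the standard bounds $\| B_K \| \lesssim h_K$, $\| B_K^{-1} \| \lesssim h_K^{-1}$, and $|\det B_K| \sim h_K^d$; these follow from the assumption $h \le \nu \min_K \rho_K$. For $v \in H^1(K)$ set $\wh v := v \circ F_K \in H^1(\wh K)$. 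Then a direct scaling of the $L^2$ norms gives $\| v \|_{L^2(\partial K)}^2 \sim h_K^{d-1} \| \wh v \|_{L^2(\partial \wh K)}^2$ (using that the surface Jacobian of $F_K$ restricted to a face scales like $h_K^{d-1}$), $\| v \|_{L^2(K)}^2 \sim h_K^d \| \wh v \|_{L^2(\wh K)}^2$, and, by the chain rule $\wh\nabla \wh v = B_K^T (\nabla v)\circ F_K$, the estimate $\| \nabla v \|_{L^2(K)}^2 \sim h_K^{d-2}\| \wh\nabla \wh v \|_{L^2(\wh K)}^2$, with the implied constants depending only on $d$ and the shape-regularity parameter $\nu$.

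\medskip

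Next I would establish the inequality on the fixed reference element: since $\wh K$ is a fixed Lipschitz domain, the continuity of the trace operator $H^1(\wh K) \to L^2(\partial \wh K)$ yields a constant $\wh C = \wh C(\wh K)$ with
\begin{equation}
  \| \wh v \|_{L^2(\partial \wh K)}^2 \le \wh C \left( \| \wh v \|_{L^2(\wh K)}^2 + \| \wh \nabla \wh v \|_{L^2(\wh K)}^2 \right), \quad \forall \wh v \in H^1(\wh K).
  \label{eq_trace_ref}
\end{equation}
Substituting the scaling relations into \eqref{eq_trace_ref} and multiplying through appropriately, the $h_K^{d-1}$ on the left cancels against the $h_K^d$ and $h_K^{d-2}$ on the right after dividing by $h_K^{d-1}$: the first term on the right contributes $h_K^{-1}\| v \|_{L^2(K)}^2$ and the second contributes $h_K \| \nabla v \|_{L^2(K)}^2$, which gives exactly \eqref{eq_trace} with $C$ depending only on $d$ and $\nu$. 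For the case where $K$ is a simplex one may take $\wh K$ the unit reference simplex once and for all, so a single $\wh C$ suffices for all elements.

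\medskip

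There is no real obstacle here — the argument is entirely standard. The only point requiring minor care is the bookkeeping of the powers of $h_K$ in the three scaling identities, in particular that the boundary term scales with $d-1$ rather than $d$ powers of $h_K$ and that the gradient term picks up the factor $h_K^{-2}$ from $\| B_K^{-1} \|^2$; getting these exponents right is what makes the two terms on the right-hand side of \eqref{eq_trace} appear with the asymmetric weights $h_K^{-1}$ and $h_K^{+1}$. Alternatively, one can avoid the reference-element map entirely and argue directly: for a simplex $K$ with face $e \subset \partial K$ and opposite vertex, use the divergence-theorem identity $\int_e |v|^2 \, \mr{d}s = \frac{1}{|e|}\,\text{(something)}$... but the scaling proof above is cleaner, so I would present that one.
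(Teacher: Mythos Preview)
Your proof is correct and is the standard reference-element scaling argument. The paper itself does not prove this lemma at all: it simply states the inequality and refers the reader to the textbook \cite{Brenner2007mathematical} for details. Your argument is precisely the one found there, so there is nothing to compare.
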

\begin{lemma}
There exists a constant $C$ independent of the mesh size $h$, such that
  \begin{equation}
    \| v \|_{H^q(K)} \leq C h_K^{-q} \| v \|_{L^2(K)}, \quad 
    \forall v \in \mb{P}_l(K),
    \label{eq_inverse}
  \end{equation}
  where $\mb{P}_l(K)$ is the space of polynomial on $K$ with the 
  degree no more than $l$.
  \label{le_inverse}
\end{lemma}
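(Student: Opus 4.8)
The plan is to pass to a fixed reference element by an affine change of variables and then exploit the equivalence of all norms on the finite-dimensional polynomial space $\mb{P}_l$. For $K \in \MTh$, let $\wh{K}$ be the reference simplex and let $F_K(\wh{x}) = B_K \wh{x} + b_K$ be the invertible affine map with $F_K(\wh{K}) = K$. Standard estimates, together with the quasi-uniformity assumption $h \le \nu \min_{K' \in \MTh} \rho_{K'}$ (which forces $\rho_K \ge h_K/\nu$), give $\|B_K\| \le C h_K$, $\|B_K^{-1}\| \le C h_K^{-1}$ and $|\det B_K| \sim h_K^d$ with $C$ depending only on $d$ and $\nu$; this is the only place the mesh geometry enters.

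Next, given $v \in \mb{P}_l(K)$, set $\wh{v} := v \circ F_K \in \mb{P}_l(\wh{K})$. Since $\mb{P}_l(\wh{K})$ is finite dimensional, every seminorm $|\cdot|_{H^m(\wh{K})}$ with $0 \le m \le q$ is dominated by the $L^2(\wh{K})$-norm, whence $\|\wh{v}\|_{H^q(\wh{K})} \le C \|\wh{v}\|_{L^2(\wh{K})}$ with $C = C(l,q,d)$. It remains to transport this estimate back to $K$. Because $F_K$ is affine, the multivariate chain rule writes each $m$-th order derivative of $v = \wh{v} \circ F_K^{-1}$ as a linear combination of $m$-th order derivatives of $\wh{v}$ with coefficients that are products of $m$ entries of $B_K^{-1}$; combining this with the substitution $x = F_K(\wh{x})$ yields $|v|_{H^m(K)}^2 \le C \|B_K^{-1}\|^{2m}\, |\det B_K|\, |\wh{v}|_{H^m(\wh{K})}^2$, while $\|\wh{v}\|_{L^2(\wh{K})}^2 = |\det B_K|^{-1}\,\|v\|_{L^2(K)}^2$. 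Chaining these with the reference estimate, the Jacobian factors cancel and the bound $\|B_K^{-1}\| \le C h_K^{-1}$ gives $|v|_{H^m(K)} \le C h_K^{-m}\|v\|_{L^2(K)}$ for every $0 \le m \le q$. Summing over $m$ and using $h_K \le \mathrm{diam}(\Omega)$ to absorb the lower-order powers into $h_K^{-q}$ produces \eqref{eq_inverse}.

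The argument is routine; the only point deserving care is the combinatorial bookkeeping in the higher-order chain rule and checking that the final constant depends on the partition solely through $\nu$ (besides $l$, $q$, $d$ and $\mathrm{diam}(\Omega)$), which is exactly what makes it independent of $h$. I would also note that Lemma~\ref{le_trace} follows from the very same affine-mapping device applied to $v \in H^1(K)$: map to $\wh{K}$, invoke the standard trace inequality $\|\wh{v}\|_{L^2(\partial \wh{K})}^2 \le C\big(\|\wh{v}\|_{L^2(\wh{K})}^2 + \|\nabla \wh{v}\|_{L^2(\wh{K})}^2\big)$ on the fixed reference element, and scale back, the surface and volume Jacobians now producing the weights $h_K^{-1}$ and $h_K$ respectively — so the two lemmas share a single proof scheme.
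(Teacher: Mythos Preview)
Your argument is correct and is the classical affine-scaling proof one finds in standard references. The paper does not supply its own proof of this lemma; it simply cites \cite{Brenner2007mathematical}, so your write-up is exactly the content being deferred there.
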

We refer to \cite{Brenner2007mathematical} for more details of these 
inequalities.

Further, we give the notations related to the interface. For any 
face $e \in \MEh$ and any element $K \in \MTh$, we define 
\begin{displaymath}
  e^i := e \cap \Omega_i, \quad K^i := K \cap \Omega_i, \quad
  (\partial K)^i := \partial K \cap \Omega_i, \quad i = 0, 1,
\end{displaymath}
and we define $\MTh^i$ and $\MEh^i$ as (see Fig.~\ref{fig_MTh0MTh1})
\begin{displaymath}
  \MTh^i := \left\{ K \in \MTh \ | \  |K^i| > 0 \right\}, \quad \MEh^i
  := \left\{ e \in \MEh \ | \  |e^i| > 0  \right\}, \quad i = 0, 1.
\end{displaymath}
We denote by $\MThG$ and $\MEhG$ the set of elements and faces that
are cut by the interface $\Gamma$, respectively, 
\begin{displaymath}
  \MThG := \left\{ K \in \MTh \ | \ K \cap \Gamma \neq \varnothing
  \right\}, \quad \MEhG := \left\{ e \in \MEh \ | \ e \cap \Gamma \neq
  \varnothing \right\}.
\end{displaymath}
We define $\MThB := \MTh \backslash \MThG$ and $\MEhB := \MEh
\backslash \MEhG$, and let $\MThBZ := \MTh^0 \backslash \MThG$,
$\MThBO := \MTh^1 \backslash \MThG$ be the sets of all interior
elements inside the domain $\Omega_0$ and $\Omega_1$, respectively. 
For any cut element $K \in \MThG$, we define $\Gamma_K := K \cap
\Gamma$.  For any element $K$, we define 
\begin{displaymath}
  \partial K^i := \begin{cases}
    \partial K, & K \in \MThBI, \\
    (\partial K)^i  \cup \Gamma_K, & K \in \MThG,\\
  \end{cases} \quad i = 0, 1.
\end{displaymath}

\begin{figure}[htp]
  \centering
  \begin{minipage}[t]{0.33\textwidth}
    \begin{tikzpicture}[scale=2.3]
      \input{figure/mth.tex}
      \draw[red, thick] (0.6, 0) arc [start angle=0, end angle=360,
            x radius=0.6, y radius=0.4]; 
    \end{tikzpicture}
  \end{minipage} 
  \hspace{-0.8cm}
  \begin{minipage}[t]{0.33\textwidth}
    \begin{tikzpicture}[scale=2.3]
      \input{figure/mth1.tex}
      \draw[red, thick] (0.6, 0) arc [start angle=0, end angle=360,
            x radius=0.6, y radius=0.4]; 
    \end{tikzpicture}
  \end{minipage}  
  \hspace{-0.8cm}
  \begin{minipage}[t]{0.33\textwidth}
    \begin{tikzpicture}[scale=2.3]
      \input{figure/mth0.tex}
      \draw[red, thick] (0.6, 0) arc [start angle=0, end angle=360,
            x radius=0.6, y radius=0.4]; 
    \end{tikzpicture}
  \end{minipage}
  \caption{The mesh $\MTh$ (left) / $\MTh^0$ (middle) / $\MTh^1$
  (right), the elements in $\MThG$ (blue).}
  \label{fig_MTh0MTh1}
\end{figure}

We make some geometrical assumptions about the mesh to ensure the
interface is well-resolved by the mesh, which is commonly used in
numerically solving interface problems \cite{Hansbo2002unfittedFEM, 
Cai2023Nitsche}. 

\begin{assumption}
  For any cut face $e \in \MEhG$, $e \cap \Gamma$ is simply connected,
  i.e., the interface does not intersect a face multiple times. 
  \label{as_mesh1}
\end{assumption}
\begin{assumption}
  For any element $K \in \MThG$, there \substitute{exists}{exist} two
  elements $K_\circ^0 \in \MThBZ, K_\circ^1 \in \MThBO$ such that
  $K_\circ^0, K_\circ^1 \in \Delta(K)$, where $\Delta(K)$ denotes the
  Moore neighbours of the element $K$, that is $\Delta(K) := \{ K' \in
  \MTh \ | \ \substitute{K' \cap K \neq \varnothing}{\overline{K'}
  \cap \overline{K} \neq \varnothing} \}.$
  \label{as_mesh2}
\end{assumption}

Such assumptions can always holds true when the mesh is fine enough. 
The Assumption \ref{as_mesh2} allows us to define two maps $M^0(\cdot)$
and $M^1(\cdot)$ such that for any element $K \in \MTh$,
\begin{equation}
  M^i(K) = \begin{cases}
    K, & K \in \MThBI, \\
    K_\circ^i, & K \in \MThG, \\
  \end{cases} \quad i = 0, 1.
  \label{eq_Mi}
\end{equation}
where $K_\circ^i$ is any chosen element that are in the Moore 
neighbours of $K$ and are included in $\Omega_i$. These maps will be 
used in the construction of the reconstruction operator.

Next, we introduce the trace operators that will be used in our 
numerical schemes. For $e \in \MEh^i$, we denote by $K^+$ and $K^-$ the
two neighbouring elements that share the boundary $e$, and $\un^+, \un^-$
the unit out normal vector on $e$, respectively. We define the jump
operator $\jump{\cdot}$ and the average operator $\aver{\cdot}$ as
\begin{equation}
  \begin{aligned}
    \jump{v} &:= v|_{K^+} \un_+ + v|_{K^-} \un_-, \quad \text{ for
    scalar-valued function}, \\
    \jump{\bm{q}} &:= \un^+ \cdot \bm{q}|_{K^+} + \un^- \cdot 
    \bm{q}|_{K^-}, \quad \text{ for vector-valued function}, \\
    \aver{v} &:= \frac{1}{2}(v|_{K^+}  + v|_{K^-}), \quad \text{ for
    scalar-valued function}, \\
    \aver{\bm{q}} &:= \frac{1}{2}(\bm{q}|_{K^+} + \bm{q}|_{K^-}), 
    \quad \text{ for vector-valued function}.
  \end{aligned}
  \label{eq_traceopei}
\end{equation}
For $e \in \MEh^b$, we let $K \in \MTh$ such that $e \in \partial K$ 
and $\un$ is the unit out normal vector. We define 
\begin{equation}
  \begin{aligned}
    &\jump{v} := v|_{K} \un \quad \text{ for scalar-valued function},
    \qquad
    \jump{\bm{q}} := \un \cdot \bm{q}|_{K} \quad 
    \text{ for vector-valued function}, \\
    &\aver{v} := v|_{K}\quad \text{ for scalar-valued function}, \qquad
    \aver{\bm{q}} := \bm{q}|_{K} \quad \text{ for vector-valued
    function}.
  \end{aligned}
  \label{eq_traceopeb}
\end{equation}
For $K \in \MThG$, we have already defined the jump operators in
\eqref{eq_jumpopG}, the average operators are defined as follows.
\begin{equation}
  \begin{aligned}
    \aver{v} &:= \frac{1}{2}(v|_{K_0} + v|_{K_1}), \quad \text{ for 
    scalar-valued function}, \\
    \aver{\bm{q}} &:= \frac{1}{2}(\bm{q}|_{K_0} + \bm{q}|_{K_1}),
    \quad \text{ for vector-valued function}.
  \end{aligned}
  \label{eq_averageG}
\end{equation}

Throughout this paper, $C$ and $C$ with subscripts denote the generic 
constants that may differ between lines but are independent of the 
mesh size and how the interface intersects the mesh.


\section{Reconstructed Discontinuous Space}
\label{sec_space}
In this section, we aim to construct the reconstructed discontinuous 
space for approximating the problem \eqref{eq_interface} by using a 
global reconstruction operator. The global operator which we denote 
by $\mc{R}$ has two components $\mc{R}^0$ and $\mc{R}^1$ with respect 
to the sets $\MTh^0$ and $\MTh^1$. To obtain these operators, we employ 
a reconstruction process for each $\MTh^i,\,\,i=0,1$ that consists of 
three steps. First, we mark the barycenter $\bm{x}_K$ for every 
interior element $K \in \MThBI$ as a collocation point, $i = 0,1$, 
see Fig.~\ref{fig_collocationpoint}.
\begin{figure}[htp]
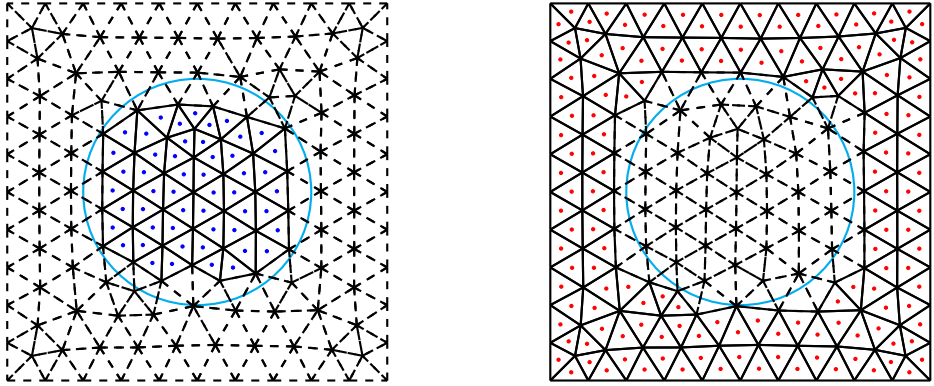

  \centering
  \begin{minipage}[tb]{0.45\textwidth}
    \centering
    \begin{tikzpicture}[scale=2.5]
      \draw[cyan, thick] (0, 0) circle [radius = 0.6];
      \input{figure/mth0p2_2.tex}
    \end{tikzpicture}
    \hspace{-1cm}
  \end{minipage} \begin{minipage}[tb]{0.45\textwidth}
    \centering
    \begin{tikzpicture}[scale=2.5]
      \draw[cyan, thick] (0, 0) circle [radius = 0.6];
      \input{figure/mth1p2_2.tex}
    \end{tikzpicture}
  \end{minipage}
  \caption{The collocation points in $\MTh^0$ (left) / $\MTh^1$
  (right). }
  \label{fig_collocationpoint}
\end{figure}

The second step is to construct the element patch for all elements.
For any element $K \in \MTh^i$, we need to construct a patch set
$S^i(K)$ which contains several elements that are also in $\MTh^i$.
For any $K \in \MThBI$, the $S^i(K)$ is selected by a recursive
algorithm. We assign a threshold $\# S$ to be the cardinality of the
element patch and begin the recursion by setting $S^i_0(K) = \{ K \}$,
and then define $S_t^i(K)$ recursively:
\begin{displaymath}
  S_t^i(K) := \bigcup_{K' \in S_{t - 1}^i(K)} \ \bigcup_{K'' \in
  \Delta(K'), \ K'' \in \MTh^i} K'', \quad
  t=0,1,\ldots
\end{displaymath}
The recursion terminates once $t$ satisfies the condition $\# S_t^i(K) 
\geq \# S$. Then we sort the elements in $S_t^i(K)$ according to the 
distance between their barycenter and the barycenter of $K$, and 
select the $\# S$ elements with the shortest distance to form the 
patch $S^i(K)$. For the element $K \in \MThG$, we need to construct 
two element patches $S^0(K) \subset \MTh^0$ and $S^1(K) \in \MTh^1$. 
From Assumption \ref{as_mesh2}, we have $M^0(K) \in \MThBZ$ and 
$M^1(K) \in \MThBO$, and we directly let $S^i(K) = S^i(M^i(K))$. 
Here we assume $K \in S^i(M^i(K))$ and this can be easily fulfilled 
for a bit large $\# S$. We denote by $I^i(K)$ the set of all 
collocation points located in $S^i(K)$,
\begin{displaymath}
  I^i(K) := \{ \bm{x}_{\wt{K}} | \wt{K} \in S^i(K) \cap \MThB \}.
\end{displaymath}

The final step is to solve a local constrained least squares fitting 
problem. The least squares problem resolves a polynomial of 
degree $m$ from a piecewise constant function in $U_h^0$, where 
\begin{displaymath}
  U_h^0 := \{v_h \in L^2(\Omega) \ | \ v_h|_K \in \mb{P}_0(K), \quad
  \forall K \in \MTh \},
\end{displaymath}
Given a function $g_h \in U_h^0$ and an integer $m \geq 1$, we consider
the following problem for each element $K \in \MTh^i, \,\, i =0,1$:
\begin{equation}
  p_{S^i(K)} = \mathop{\arg \min}_{ q \in
  \mb{P}_m(S^i(K))} \sum_{\bm{x} \in I^i(K)} | q(\bm{x}) -
  g_h(\bm{x}) |^2, \quad \text{s.t. } 
  q(\bm{x}_{M^i(K)}) = g_h(\bm{x}_{M^i(K)}).
  \label{eq_lsproblem}
\end{equation}
The constraint in \eqref{eq_lsproblem} is crucial for proving the 
linear independence result in Lemma \ref{le_lambdaid}. 
We make the following geometrical assumption on the location of 
collocation points \cite{Li2012efficient, Li2016discontinuous}: 
\begin{assumption}
  For any element patch $S^i(K)$ and any polynomial $p \in
  \mb{P}_m(S^i(K))$, $p|_{I^i(K) } = 0$ implies $p|_{S^i(K)} = 0$.
  \label{as_lsproblem}
\end{assumption}
This assumption excludes the case that the points in $I^i(K)$ are on 
an algebraic curve of degree $m$ and requires $\# I^i(K) \geq 
\dim(\mb{P}_m(\cdot))$. Under this assumption, the fitting problem 
\eqref{eq_lsproblem} admits a unique solution. We note that $p_{S^i(K)}$
depends linearly on the given function $g_h$ since it is obtained by a
least squares problem. This property inspires us to define a linear 
local reconstruction operator $\mc{R}_K^i$ for all elements in $\MTh^i$ 
by restricting the $p_{S^i(K)}$ to $K$,
\begin{displaymath}
  \begin{aligned}
    \mc{R}_K^i : U_h^0  &\rightarrow \mb{P}_m(K), \\
    g_h & \rightarrow (p_{S^i(K)})|_K, \\
  \end{aligned} \quad \forall K \in \MTh^i, \quad i=0,1.
\end{displaymath}
Based on the local operators, we can further define two
reconstruction operators piecewise as follows
\begin{displaymath}
  \begin{aligned}
    \mc{R}^i : U_h^0 & \rightarrow U_h^{m, i}, \\
    g_h & \rightarrow \mc{R}^i g_h, \\
  \end{aligned} \quad
  (\mc{R}^i g_h)|_K := \mc{R}_K^i g_h,   
  \quad \forall K \in \MTh^i, \quad i=0,1,
\end{displaymath}
where $U_h^{m,i}$ represents the image space of the operator 
$\mc{R}^i$. Obviously, $U_h^{m,i}$ is a subspace of the space of 
piecewise polynomials of degree $m$ over the partition $\MTh^i$. Next,
we investigate the basis functions of $U_h^{m,i}$. For any element 
$K \in \MTh^i$, we pick up a function $e_K \in U_h^0$ such that
\begin{displaymath}
  e_K(\bm{x}) = \begin{cases}
    1, & \bm{x} \in K, \\
    0, & \text{otherwise}. \\
  \end{cases}
\end{displaymath}
We let $\lambda_K^i := \mc{R}^i e_K, \,\, K \in \MThBI$ and we state 
that the space $U_h^{m,i}$ is spanned by $\{ \lambda_K^i \}$.
\begin{lemma}
  For $i = 0, 1$, the functions $ \{ \lambda_K^i \}(K \in
  \MThBI)$ are linearly independent and the space $U_h^{m, i} = 
  \text{span}( \{ \lambda_{K}^i \})$. 
  \label{le_lambdaid}
\end{lemma}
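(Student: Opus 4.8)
The plan is to reduce both claims to elementary linear algebra, using two facts about $\mc{R}^i$: it is linear (since the minimizer $p_{S^i(K)}$ in \eqref{eq_lsproblem} depends linearly on the data, by Assumption \ref{as_lsproblem}), and for a fixed $g_h\in U_h^0$ the reconstructed function $\mc{R}^i g_h$ is completely determined by the finitely many values $\{g_h(\bm{x}_K):K\in\MThBI\}$. The second fact I would establish by inspecting \eqref{eq_lsproblem}: for every $K'\in\MTh^i$ the local fitting problem sees $g_h$ only through its values on $I^i(K')$ and at $\bm{x}_{M^i(K')}$, and since $I^i(K')=\{\bm{x}_{\wt{K}}:\wt{K}\in S^i(K')\cap\MThB\}$ with $S^i(K')\subseteq\MTh^i$, while $M^i(K')\in\MThBI$ by construction of the maps $M^i$ (which uses Assumption \ref{as_mesh2} for the cut elements), every point involved is the barycenter of an element of $\MThBI=\MTh^i\cap\MThB$. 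Hence, setting $g_h^{\circ}:=\sum_{K\in\MThBI}g_h(\bm{x}_K)\,e_K$, the functions $g_h$ and $g_h^{\circ}$ agree at all these barycenters, so $\mc{R}^i g_h=\mc{R}^i g_h^{\circ}=\sum_{K\in\MThBI}g_h(\bm{x}_K)\,\lambda_K^i$ by linearity.

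From this identity the span statement is immediate: every $\mc{R}^i g_h\in U_h^{m,i}$ lies in $\text{span}(\{\lambda_K^i\})$, and conversely each $\lambda_K^i=\mc{R}^i e_K$ belongs to the image $U_h^{m,i}$. For linear independence I would suppose $\sum_{K\in\MThBI}c_K\lambda_K^i=0$, put $g_h:=\sum_{K\in\MThBI}c_K e_K\in U_h^0$ so that $\mc{R}^i g_h=0$, and then test on a single element $\wt{K}\in\MThBI$. Restricting gives $p_{S^i(\wt{K})}|_{\wt{K}}=(\mc{R}^i g_h)|_{\wt{K}}=0$; since $p_{S^i(\wt{K})}\in\mb{P}_m(S^i(\wt{K}))$ vanishes on the open set $\wt{K}$ it is the zero polynomial on the whole patch, so in particular $p_{S^i(\wt{K})}(\bm{x}_{\wt{K}})=0$. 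But the equality constraint in \eqref{eq_lsproblem}, evaluated with $M^i(\wt{K})=\wt{K}$ (valid because $\wt{K}\in\MThBI$), forces $p_{S^i(\wt{K})}(\bm{x}_{\wt{K}})=g_h(\bm{x}_{\wt{K}})=c_{\wt{K}}$, where the last equality uses that the elements are disjoint so $e_K(\bm{x}_{\wt{K}})=\delta_{K\wt{K}}$. Hence $c_{\wt{K}}=0$, and letting $\wt{K}$ range over $\MThBI$ finishes the argument.

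I expect the only genuinely delicate step to be the verification in the first paragraph that $I^i(K')$ and $\bm{x}_{M^i(K')}$ never reach a barycenter outside $\MThBI$; this is precisely how the patches $S^i(\cdot)$ and the maps $M^i$ were set up, so it amounts to carefully unwinding the definitions rather than proving anything new. After that, everything is bookkeeping, with the equality constraint in \eqref{eq_lsproblem} being the indispensable device that lets $p_{S^i(\wt{K})}(\bm{x}_{\wt{K}})$ recover the coefficient $c_{\wt{K}}$: without it, $\mc{R}^i$ could annihilate the local bump $e_{\wt{K}}$ and independence would fail.
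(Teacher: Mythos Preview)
Your proof is correct and follows essentially the same approach as the paper. The paper phrases linear independence a bit more directly by observing that $\lambda_K^i(\bm{x}_{K'})=\delta_{KK'}$ for $K,K'\in\MThBI$ (exactly your constraint observation) and then evaluating the vanishing sum at each barycenter; your detour through ``$p_{S^i(\wt{K})}$ vanishes on the open set $\wt{K}$, hence on the whole patch'' is unnecessary since $\bm{x}_{\wt{K}}\in\wt{K}$ already, but it does no harm.
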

\begin{proof}
  For any $K \in \MThBI$, assume there exists a group of coefficients
  $\{a_K^i\}$ such that  
  \begin{equation}
    \sum_{K \in \MThBI} a_{K}^i
    \lambda_{K}^i(\bm{x}) = 0, \quad \forall \bm{x} \in
    \mb{R}^d.
    \label{eq_alambda}
  \end{equation}
  We take $\bm{x} = \bm{x}_K$ for all $K \in \MThBI$ in 
  \eqref{eq_alambda}, from the constraint in the problem 
  \eqref{eq_lsproblem}, we have that 
  \begin{displaymath}
      \lambda_{K}^i(\bm{x}_{K'}) = \begin{cases}
      1, & K' = K, \\
      0, & \text{otherwise}, \\
    \end{cases}
  \end{displaymath}
  which infers that $a_K^i = 0$ for all $K \in \MThBI$. Thus,
  the functions $ \{ \lambda_{K}^i \}(K \in \MThBI)$ are linearly 
  independent. For any function $g_h \in U_h^0$, it can be decomposed 
  as
  \begin{displaymath}
    g_h = \sum_{K \in \MTh^i} g_h(\bm{x}_{K}) e_K.
  \end{displaymath}
  Thus, one can explicitly write $\mc{R}^i g_h$ as
  \begin{equation}
    \mc{R}^i g_h = \sum_{K \in \MTh^i} g_h(\bm{x}_K) \mc{R}^i e_K
    = \sum_{K \in \MThBI} g_h(\bm{x}_K) \lambda_K^i,
    \label{eq_Rig}
  \end{equation}
  since $\mc{R}^i e_K = 0, \quad \forall K \in \MThG$. Then we can 
  conclude that $U_h^{m,i}$ is spanned by $\{ \lambda_K^i \}$, which 
  completes the proof.
\end{proof}
From the problem \eqref{eq_lsproblem}, the basis function
$\lambda_K^i$ vanishes on the element $K'$ that $K \not\in
S^i(K')$.  This fact indicates $\lambda_K^i$ has a finite
support set that $\text{supp}(\lambda_K^i) = \{K' \in \MTh^i
\ | \ K \in S^i(K') \}$. Fig.~\ref{fig_basis} presents two examples of
the basis function. We can extend the operator $\mc{R}^i(i = 0,1)$
to act on smooth functions. For any $g \in H^{m+1} (\Omega)$, we define
a piecewise constant function $g_h \in U_h^0$ as
\begin{displaymath}
  g_h(\bm{x}_K) := g(\bm{x}_K),
\end{displaymath}
and we directly define $\mc{R}^i g := \mc{R}^i g_h$. 
In this way, any smooth function in $H^{m+1}(\Omega)$ is mapped into a
piecewise polynomial function with respect to $\MTh^i$ by the operator
$\mc{R}^i$ and for any $g \in H^{m+1}(\Omega)$, $\mc{R}^i g$
can also be written as \eqref{eq_Rig}. 
\begin{figure}[htbp]
  \centering
  \includegraphics[width=0.3\textwidth]{./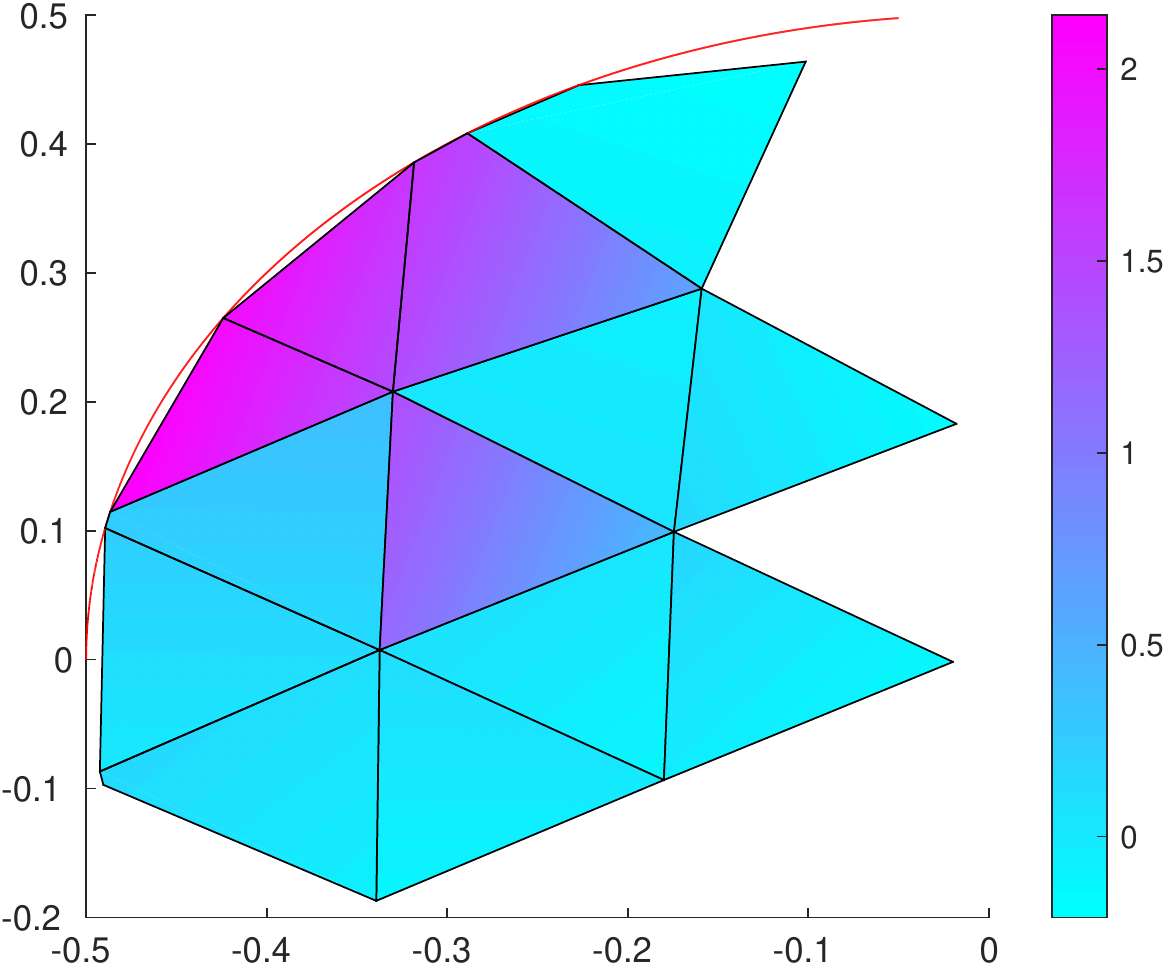}
  \hspace{1cm}
  \includegraphics[width=0.3\textwidth]{./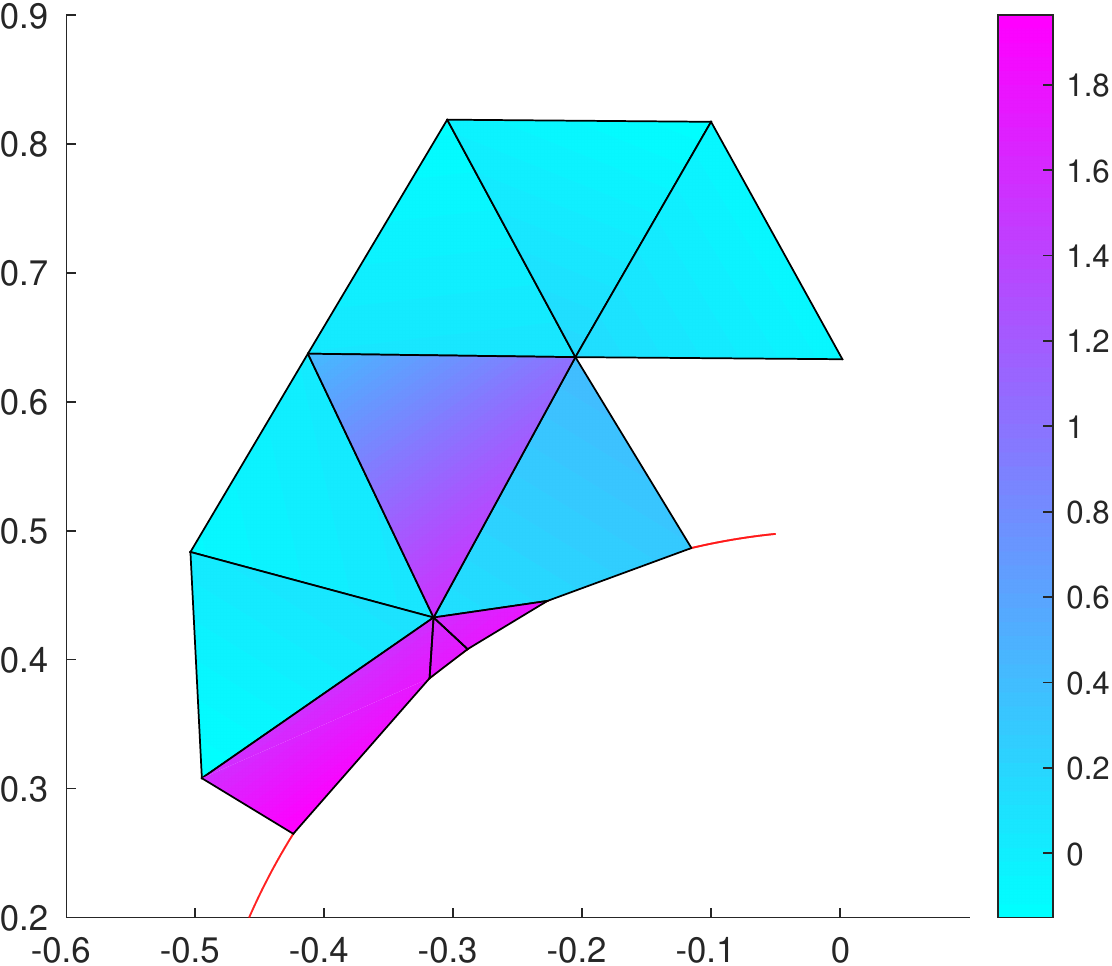}
  \caption{Two basis functions inside the interface (left) and outside
  the interface (right).}
  \label{fig_basis}
\end{figure}

Next, we will prove some approximation properties of the operator
$\mc{R}^i(i = 0, 1)$. We first define a constant $\Lambda(m, S^i(K))$
for every element patch \cite{Li2016discontinuous},
\begin{displaymath} 
  \Lambda(m, S^i(K)) := \max_{p \in
  \mb{P}_m(S^i(K))} \frac{\max_{\bm{x} \in S^i(K)} |p(\bm{x})|
  }{\max_{\bm{x} \in I^i(K)} |p(\bm{x})| }.
\end{displaymath}
Assumption \ref{as_lsproblem} actually ensures $\Lambda(m, S^i(K)) <
\infty$, and we set 
\begin{equation}
  \Lambda_m := \max_{i = 0, 1} \max_{K \in \MTh^i} \left( 1 +
  \Lambda(m, S^i(K)) \sqrt{\# I^i(K)} \right).
  \label{eq_Lambdam}
\end{equation}
\begin{lemma}
  For any element $K \in \MTh^i$, there holds
  \begin{equation}
    \|g - \mc{R}_K^i g \|_{L^{\infty}(K)} \leq 2
    \Lambda_m \inf_{q \in \mb{P}_m(S^i(K))} \| g - q
    \|_{L^\infty (S^i(K))}, \quad \forall g \in H^{m+1}(\Omega). 
    \label{eq_stability}
  \end{equation}
  \label{le_stability}
\end{lemma}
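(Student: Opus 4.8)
The plan is to combine the polynomial–reproduction property of the local reconstruction with the definition of $\Lambda_m$, the only delicate point being the equality constraint in \eqref{eq_lsproblem}. First I would record that $\mc{R}^i$ reproduces polynomials: for $q\in\mb{P}_m(S^i(K))$, feeding the piecewise constant $q_h$ (with $q_h(\bm{x}_{K'})=q(\bm{x}_{K'})$) into \eqref{eq_lsproblem} gives zero fitting residual while the constraint $p(\bm{x}_{M^i(K)})=q_h(\bm{x}_{M^i(K)})$ is trivially met, so by the uniqueness granted by Assumption \ref{as_lsproblem} the minimiser is $q$ itself, hence $\mc{R}_K^i q=q|_K$. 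Fix $K\in\MTh^i$, let $q^*\in\mb{P}_m(S^i(K))$ realise $E:=\inf_{q\in\mb{P}_m(S^i(K))}\|g-q\|_{L^\infty(S^i(K))}$, and set $\bm{x}_0:=\bm{x}_{M^i(K)}$, which lies in $S^i(K)$ since $M^i(K)\in S^i(M^i(K))=S^i(K)$. I would then introduce
\begin{displaymath}
  \tilde q := q^* + \bigl( g(\bm{x}_0) - q^*(\bm{x}_0) \bigr) \in \mb{P}_m(S^i(K)),
\end{displaymath}
so that $\tilde q(\bm{x}_0)=g(\bm{x}_0)$ and, because $|g(\bm{x}_0)-q^*(\bm{x}_0)|\le E$, one has $\|g-\tilde q\|_{L^\infty(S^i(K))}\le 2E$.

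Using linearity of $\mc{R}^i$ together with $\mc{R}_K^i\tilde q=\tilde q|_K$, I would decompose $g-\mc{R}_K^i g=(g-\tilde q)-\mc{R}_K^i(g-\tilde q)$ on $K$, so that
\begin{displaymath}
  \| g - \mc{R}_K^i g \|_{L^\infty(K)} \le \| g - \tilde q \|_{L^\infty(S^i(K))} + \| \mc{R}_K^i(g - \tilde q) \|_{L^\infty(K)}.
\end{displaymath}
The key observation is that $\mc{R}_K^i(g-\tilde q)=p'|_K$, where $p'\in\mb{P}_m(S^i(K))$ solves \eqref{eq_lsproblem} with data $(g-\tilde q)_h$, and since $(g-\tilde q)(\bm{x}_0)=0$ the constraint becomes the \emph{homogeneous} one $p'(\bm{x}_0)=0$; thus $p'$ is the $\ell^2(I^i(K))$-orthogonal projection of the data vector onto the linear subspace $\{ q\in\mb{P}_m(S^i(K)) : q(\bm{x}_0)=0 \}$ (identifying polynomials with their values on $I^i(K)$, legitimate by Assumption \ref{as_lsproblem}). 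Orthogonal projections are contractions, whence
\begin{displaymath}
  \max_{\bm{x}\in I^i(K)}|p'(\bm{x})| \le \Bigl( \sum_{\bm{x}\in I^i(K)}|p'(\bm{x})|^2 \Bigr)^{1/2} \le \Bigl( \sum_{\bm{x}\in I^i(K)}|(g-\tilde q)(\bm{x})|^2 \Bigr)^{1/2} \le \sqrt{\# I^i(K)}\,\| g - \tilde q \|_{L^\infty(S^i(K))},
\end{displaymath}
and then the definition of $\Lambda(m,S^i(K))$ yields $\| p' \|_{L^\infty(S^i(K))}\le \Lambda(m,S^i(K))\sqrt{\# I^i(K)}\,\| g-\tilde q\|_{L^\infty(S^i(K))}$.

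Combining the last three displays with \eqref{eq_Lambdam} and $\|g-\tilde q\|_{L^\infty(S^i(K))}\le 2E$ gives
\begin{displaymath}
  \| g - \mc{R}_K^i g \|_{L^\infty(K)} \le \bigl( 1 + \Lambda(m,S^i(K))\sqrt{\# I^i(K)} \bigr) \| g - \tilde q \|_{L^\infty(S^i(K))} \le 2\Lambda_m \inf_{q\in\mb{P}_m(S^i(K))} \| g - q \|_{L^\infty(S^i(K))},
\end{displaymath}
which is the claim. The step I expect to require the most care is the handling of the constraint in \eqref{eq_lsproblem}: the raw unconstrained best polynomial (or the zero polynomial) is not admissible as a competitor for the least squares problem attached to the data $(g-\tilde q)_h$, and the device of shifting $q^*$ by a constant so that $\tilde q$ interpolates $g$ at $\bm{x}_0$ is exactly what makes the constraint homogeneous and restores the contraction property of the fit — this is the source of the harmless factor $2$. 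One also has to verify the bookkeeping that $g_h$ agrees with $g$ at every collocation point (by the definition of the extension of $\mc{R}^i$), and that $K,\bm{x}_0\in S^i(K)$ and $I^i(K)\subset S^i(K)$, so that all the $L^\infty(S^i(K))$ bounds above are justified.
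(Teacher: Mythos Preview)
Your proof is correct and follows essentially the same approach as the paper: polynomial reproduction, a contraction property of the constrained least-squares fit, and the definition of $\Lambda_m$. The only cosmetic difference is that the paper derives the contraction by a first-order variational argument around the constant $g(\bm{x}_{M^i(K)})$ and subtracts this constant from $\mc{R}_K^i(w-g)$ afterwards, whereas you absorb the same constant shift into $\tilde q$ up front so that the constraint becomes homogeneous and the projection structure is visible directly; the factor $2$ and the final bound $2\Lambda_m$ arise identically in both versions.
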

\begin{proof}
  We define a polynomial space 
  \begin{displaymath}
    \wt{\mb{P}}_m(S^i(K)) := \left\{ v \in \mb{P}_m(S^i(K))
    \ | \  v(\bm{x}_{M^i(K)}) = g(\bm{x}_{M^i(K)}) \right\}.
  \end{displaymath}
  Clearly, any polynomial in $\wt{\mb{P}}_m(S^i(K))$ satisfies the
  constraint in \eqref{eq_lsproblem}. We let $p := \mc{R}^i_K g$
  , for any $\varepsilon \in \mb{R}$ and any 
  $q \in \wt{\mb{P}}_m(S^i(K))$, we have that $p + \varepsilon 
  ( q - g(\bm{x}_{M^i(K)})) \in \wt{\mb{P}}_m(S^i(K))$ and 
  \begin{displaymath}
    \sum_{\bm{x} \in I^i(K)} | p(\bm{x}) + \varepsilon (
    q(\bm{x}) - g(\bm{x}_{M^i(K)}))- g(\bm{x})
    |^{2} \geq \sum_{\bm{x} \in I^i(K)} | p(\bm{x}) -
    g(\bm{x}) |^{2}.
  \end{displaymath}
  Because $\varepsilon$ is arbitrary, the above inequality implies 
  \begin{displaymath}
    \sum_{\bm{x} \in I^i(K)} (p(\bm{x}) - g(\bm{x})) \cdot
    (q(\bm{x}) - g(\bm{x}_{M^i(K)})) = 0,
  \end{displaymath}
  for any $q \in \wt{\mb{P}}_m(S^i(K))^d$. By letting $q =
  p$ and applying Cauchy-Schwarz inequality, 
  \begin{displaymath}
    \begin{aligned}
      0 &= \sum_{\bm{x} \in I^i(K)} (g(\bm{x}) -
      g(\bm{x}_{M^i(K)}) + g(\bm{x}_{M^i(K)}) - p(\bm{x}))
      \cdot (p(\bm{x}) - g(\bm{x}_{M^i(K)}))
      \\
      & = \sum_{\bm{x} \in I^i(K)} \left( -| p(\bm{x}) -
      g(\bm{x}_{M^i(K)}) |^2 + (p(\bm{x}) -
      g(\bm{x}_{M^i(K)})) \cdot (g(\bm{x}) - g(\bm{x}_{M^i(K)})) 
      \right) \\
      & \leq -\frac{1}{2} \sum_{\bm{x} \in I^i(K)} | p(\bm{x}) -
      g(\bm{x}_{M^i(K)}) |^2 + \frac{1}{2} \sum_{\bm{x} \in I^i(K)}
      | g(\bm{x}) - g(\bm{x}_{M^i(K)}) |^2,
    \end{aligned}
  \end{displaymath}
  and we obtain
  \begin{equation}
    \sum_{\bm{x} \in I^i(K)} | \mc{R}^i_K g -
    g(\bm{x}_{M^i(K)}) |^2 \leq  \sum_{\bm{x} \in I^i(K)} 
    | g(\bm{x}) - g(\bm{x}_{M^i(K)}) |^2.
    \label{eq_pgh1}
  \end{equation}
  Moreover, for any $w \in {\mb{P}}_m(S^i(K))$, it is trivial
  to see $\mc{R}_K^i w = w$, and we derive that 
  \begin{displaymath}
    \begin{aligned}
      \| g - \mc{R}_K^i g \|_{L^{\infty}(K)} = \|g - w + w - 
      \mc{R}_K^i g \|_{L^{\infty}(K)} \leq
      \|g - w \|_{L^{\infty}(K)} +  \| w - \mc{R}_K^i g 
      \|_{L^{\infty}(K)},
    \end{aligned}
  \end{displaymath}
  and 
  \begin{displaymath}
    \begin{aligned}
      \| w - \mc{R}_K^i g \|_{L^{\infty}(K)} &= \| \mc{R}_K^i
      (w - g) \|_{L^{\infty}(K)} \\
      &\leq \| \mc{R}_K^i (w - g) - (w - g)(\bm{x}_{M^i(K)}) 
      \|_{L^\infty(K)} + | (w - g)(\bm{x}_{M^i(K)}) |. 
    \end{aligned}
  \end{displaymath}
  From \eqref{eq_pgh1} and the constant $\Lambda(m, S^i(K))$, it can
  be seen that
  \begin{displaymath}
    \begin{aligned}
      \| \mc{R}_K^i (w - g) - & (w - g)(\bm{x}_{M^i(K)})
      \|_{L^\infty(K)} \leq \Lambda(m, S^i(K)) \max_{\bm{x} \in
      I^i(K)} | \mc{R}_K^i (w - g)(\bm{x}) - (w-g)(\bm{x}_{M^i(K)})
      | \\
      &\leq \Lambda(m, S^i(K)) \left( \sum_{\bm{x} \in I^i(K)}  |
      \mc{R}_K^i (w - g)(\bm{x}) - (w - g)(\bm{x}_{M^i(K)}) |^2 
      \right)^{1/2} \\
      & \leq \Lambda(m, S^i(K)) \sqrt{\# I^i(K)} \max_{
      \bm{x} \in I^i(K)} \| (w - g)(\bm{x}) - (w - g)(\bm{x}_{M^i(K)}) 
      | \\
      & \leq 2\Lambda(m, S^i(K)) \sqrt{\# I^i(K)} \|w - g
      \|_{L^\infty(S^i(K))}.
    \end{aligned}
  \end{displaymath}
  Combining all above inequalities, we conclude that
  \begin{displaymath}
    \begin{aligned}
      \| g - \mc{R}^i_K g \|_{L^{\infty}(K)} &\leq 
      2\Lambda(m, S^i(K)) \sqrt{ \# I^i(K)} \| w - g
      \|_{L^{\infty}(S^i(K))} \\
      &+ \| w - g \|_{L^{\infty}(K)} + | (w - g)(\bm{x}_{M^i(K)}) 
      | \\
      &\leq 2(1 + \Lambda(m, S^i(K)) \sqrt{ \# I^i(K)}) \| w - g
      \|_{L^{\infty}(S^i(K))}, 
    \end{aligned}
  \end{displaymath}
  for any $w \in \mb{P}_m(S^i(K))$, which implies \eqref{eq_stability}
  and completes the proof.
\end{proof}
From the stability result \eqref{eq_stability}, we can prove the 
following approximation property.
\begin{lemma}
  For any $K \in \MTh^i(i = 0,1)$ and $g \in H^{m+1}(\Omega)$, there 
  exists a constant $C$ such that
  \begin{equation}
    \| g - \mc{R}^i g \|_{H^q(K)} \leq C \Lambda_m h_K^{m+1-q} \| g
    \|_{H^{m+1}(S^i(K))}, \quad 0 \leq q \leq m.
    \label{eq_approximation}
  \end{equation}
  \label{le_approximation}
\end{lemma}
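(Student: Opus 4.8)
The plan is to deduce the $H^q$ estimate from the $L^\infty$ stability bound \eqref{eq_stability} by inserting a polynomial approximation of $g$ on the patch and then converting it into an $H^q$ bound on $K$ through the inverse inequality \eqref{eq_inverse}. First I would record the geometric fact underlying all the scaling arguments: since $\MTh$ is quasi-uniform and the patch cardinality $\# S$ is a fixed constant, the patch $S^i(K)$ is connected, is contained in a ball of radius $C h_K$, and contains a ball of radius $c h_K$ (an inscribed ball of $M^i(K)$, whose diameter is comparable to $h_K$ because $M^i(K) \in \Delta(K)$); here $c,C$ depend only on the mesh regularity and on $\# S$, not on $h$ or on how $\Gamma$ cuts the mesh. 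In particular $\mathrm{diam}(S^i(K)) \le C h_K$, so $S^i(K)$ behaves like a single shape-regular cell of size $h_K$.

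Next I would invoke the Bramble--Hilbert lemma (via an averaged Taylor polynomial over the inscribed ball, then rescaled to $S^i(K)$): there is $w \in \mb{P}_m(S^i(K))$ with
\[
  \| g - w \|_{H^\ell(S^i(K))} \le C h_K^{m+1-\ell} \| g \|_{H^{m+1}(S^i(K))},
  \quad 0 \le \ell \le m+1,
\]
and, combining this with the Sobolev embedding $H^{m+1} \hookrightarrow L^\infty$ in dimension $d \le 3$ (valid since $m \ge 1$), also
\[
  \| g - w \|_{L^\infty(S^i(K))} \le C h_K^{m+1-d/2} \| g \|_{H^{m+1}(S^i(K))}.
\]
On $K$ I then split $g - \mc{R}^i g = (g - w) + (w - \mc{R}_K^i g)$. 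The first term is handled directly by the Bramble--Hilbert estimate with $\ell = q$. For the second term, $w - \mc{R}_K^i g \in \mb{P}_m(K)$, so the inverse inequality \eqref{eq_inverse} together with $|K|^{1/2} \le C h_K^{d/2}$ gives
\[
  \| w - \mc{R}_K^i g \|_{H^q(K)} \le C h_K^{-q} \| w - \mc{R}_K^i g \|_{L^2(K)}
  \le C h_K^{-q+d/2} \| w - \mc{R}_K^i g \|_{L^\infty(K)},
\]
and $\| w - \mc{R}_K^i g \|_{L^\infty(K)} \le \| w - g \|_{L^\infty(K)} + \| g - \mc{R}_K^i g \|_{L^\infty(K)}$. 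The first piece is the $L^\infty$ Bramble--Hilbert bound; the second piece is controlled by \eqref{eq_stability}, where choosing the competitor $q = w$ in the infimum there yields $\| g - \mc{R}_K^i g \|_{L^\infty(K)} \le 2 \Lambda_m \| g - w \|_{L^\infty(S^i(K))}$. Collecting all contributions, the factor $h_K^{d/2}$ from the inverse inequality cancels the $h_K^{-d/2}$ from the $L^\infty$ embedding, and every term is bounded by $C \Lambda_m h_K^{m+1-q} \| g \|_{H^{m+1}(S^i(K))}$, which is \eqref{eq_approximation}; note that $\Lambda_m$ enters linearly and only through the single application of \eqref{eq_stability}.

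The main obstacle is ensuring the uniformity of all constants with respect to the interface, and this is exactly why the first (geometric) step matters. Because $\mc{R}^i$ annihilates the cut elements — only collocation points in $\MThB$ appear in the representation \eqref{eq_Rig} — and because for a cut $K$ one takes $S^i(K) = S^i(M^i(K))$ with $M^i(K)$ an uncut, shape-regular element of size comparable to $h_K$, every patch entering the argument is a fixed-size agglomeration of shape-regular uncut simplices. Hence the Bramble--Hilbert and inverse-inequality constants depend only on the mesh regularity and on $\# S$, and \eqref{eq_approximation} holds with $C$ independent of $h$ and of the position of $\Gamma$.
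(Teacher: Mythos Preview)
Your proof is correct and follows essentially the same route as the paper: split $g - \mc{R}^i g$ by inserting a Bramble--Hilbert polynomial on the patch, control the polynomial difference with the inverse inequality \eqref{eq_inverse}, and feed in the $L^\infty$ stability bound \eqref{eq_stability}. The only organizational difference is that the paper first isolates the case $q=0$ (the $L^2$ estimate) and then bootstraps to $q>0$ through that, whereas you handle all $q$ in one pass by going straight to $L^\infty$; the ingredients and the scaling are identical.
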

\begin{proof}
  By Lemma \ref{le_stability}, we have
  \begin{displaymath}
    \begin{aligned}
    \| g - \mc{R}^i g \|_{L^2(K)} &\leq |K|^{1/2} \| g - \mc{R}^i g
    \|_{L^\infty(K)} \leq C h_K^{d/2} \Lambda_m \inf_{p \in 
    \mb{P}_m(S^i(K))} \|g - p \|_{L^\infty(S^i(K))} \\
    &\leq C \Lambda_m h^{m+1} \| g \|_{H^{m+1}(S^i(K))}.
    \end{aligned}
  \end{displaymath}
  The case $q > 0$ will be proved by using the inverse 
  inequality \eqref{eq_inverse}. Take a polynomial $\wt{p} \in
  \mb{P}_m(S^i(K))$, we can obtain that
  \begin{displaymath}
    \begin{aligned}
    \| g - \mc{R}^i g \|_{H^q(K)} &\leq \| g - \wt{p} \|_{H^q(K)} + \|
    \wt{p} - \mc{R}^i g \|_{H^q(K)} \\
    &\leq C \inf_{p \in \mb{P}_m(S^i(K))} \| g - p \|_{H^q(S^i(K))} + 
    Ch_K^{-q} \| \wt{p} - \mc{R}^i g \|_{L^2(K)} \\
    & \leq C h_K^{m+1-q} \| g \|_{H^{m+1}(S^i(K))} + C h_K^{-q} \| g -
    \mc{R}^m g \|_{L^2(K)} \\
    & \leq C h_K^{m+1-q} \| g \|_{H^{m+1}(S^i(K))},
    \end{aligned}
  \end{displaymath}
  which completes the proof.
\end{proof}
\begin{remark}
  In the proof of Lemma \ref{le_approximation}, we have used the fact
  that for any $g \in H^{m+1}(S^i(K))$, there exists a polynomial $p
  \in \mb{P}_m(S^i(K))$ such that
  \begin{displaymath}
    \begin{aligned}
      \| g - p \|_{H^q(S^i(K))} &\leq C h_K^{m+1-q} \| g
      \|_{H^{m+1}(S^i(K))}, \quad 1 \leq  q \leq m+1, \\
      \| g - p \|_{L^\infty(S^i(K))} &\leq C h_K^{m+1 - 2/d}
      \|g\|_{H^{m+1}(S^i(K))}.
    \end{aligned}
  \end{displaymath}
  These hold in condition that $S^i(K)$ is star-shaped. We refer to
  \cite{Dupont1980polynomial} for the details.
\end{remark}
From \eqref{eq_approximation}, it can be seen that the operator
$\mc{R}^i$ has an approximation error of degree $O(h^{m+1-q})$
provided that $\Lambda_m$ admits an upper bound independent of the
mesh size $h$. Under some mild assumptions about the element patch,
we can prove that the constant $\Lambda(m, S^i(K))$ has a uniform
upper bound.
\begin{assumption}
  For every element patch $S^i(K)(K \in \MTh, i = 0 \text{ or } 1)$,
  there exist constants $R$ and $r$ which are independent of $K$ such
  that $B_r \subset S^i(K) \subset B_R$, and $S^i(K)$ is star-shaped
  with respect to $B_r$, where $B_\rho$ is a disk with the radius
  $\rho$. 
  \label{as_patch}
\end{assumption}
Under the geometrical Assumption \ref{as_patch}, we have the 
following result.
\begin{lemma}
  For any $\varepsilon > 0$, if 
  \begin{displaymath}
    r > m \sqrt{2 C_{S^i(K)} R h (1 + 1/ \varepsilon)}, 
    \qquad
    C_{S^i(K)} =
    \begin{cases}
      3, & \text{$S^i(K)$ is cut by $\Gamma$}, \\
      1, & \text{otherwise}, \\
    \end{cases}
  \end{displaymath}
  then 
  \begin{equation}
    \Lambda(m, S^i(K)) \leq 1 + \varepsilon,
    \label{eq_LmS}
  \end{equation}
  for any patch $S^i(K)$. Particularly, if $r > 2m \sqrt{C_{S^i(K)} 
  R h}$, $\Lambda(m, S^i(K)) \leq 2$.
  \label{le_Lambdam}
\end{lemma}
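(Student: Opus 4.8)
The plan is to establish the pointwise statement: for every $p\in\mb{P}_m(S^i(K))$ normalised so that $\|p\|_{L^\infty(S^i(K))}=1$, there is a collocation point $\bm{x}_0\in I^i(K)$ with $|p(\bm{x}_0)|>1/(1+\varepsilon)$. Dividing through and taking the supremum over such $p$ yields $\Lambda(m,S^i(K))\leq 1+\varepsilon$, i.e. \eqref{eq_LmS}; choosing $\varepsilon=1$ turns the hypothesis $r>m\sqrt{2C_{S^i(K)}Rh(1+1/\varepsilon)}$ into $r>2m\sqrt{C_{S^i(K)}Rh}$ and gives the final claim $\Lambda(m,S^i(K))\leq 2$. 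So fix such a $p$ and choose $\bm{x}^*$ with $|p(\bm{x}^*)|=1$; it lies in some element $\wt{K}\in S^i(K)$.

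The first step is to locate a collocation point close to $\bm{x}^*$. If $S^i(K)$ is not cut by $\Gamma$, then every element of $S^i(K)$, in particular $\wt{K}$, lies in $\MThB$, so $\bm{x}_{\wt{K}}\in I^i(K)$ and $|\bm{x}^*-\bm{x}_{\wt{K}}|\leq\operatorname{diam}(\wt{K})\leq h = C_{S^i(K)}h$. If $S^i(K)$ is cut and $\wt{K}\in\MThG$ carries no collocation point, then by Assumption \ref{as_mesh2} together with the patch construction $S^i(K)$ contains an interior element $K'\in\MThB$ that is a Moore neighbour of $\wt{K}$; passing from $\bm{x}^*\in\wt{K}$ to a common point of $\overline{\wt{K}}$ and $\overline{K'}$ and then to the barycentre $\bm{x}_{K'}$ gives $|\bm{x}^*-\bm{x}_{K'}|\leq\operatorname{diam}(\wt{K})+\operatorname{diam}(K')\leq 2h<C_{S^i(K)}h$ with $C_{S^i(K)}=3$; set $\bm{x}_0:=\bm{x}_{K'}$. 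In both cases $\bm{x}_0\in I^i(K)$ and $|\bm{x}^*-\bm{x}_0|\leq C_{S^i(K)}h$, and the segment $[\bm{x}^*,\bm{x}_0]$ stays in $S^i(K)$ — in the non-cut case trivially inside the single element $\wt{K}$, and in the cut case because $\overline{\wt{K}}$ and $\overline{K'}$ meet.

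The second step is a Markov-type gradient estimate for the star-shaped patch. By Assumption \ref{as_patch}, $B_r\subset S^i(K)\subset B_R$ and $S^i(K)$ is star-shaped with respect to $B_r$; following \cite{Li2016discontinuous} this implies, for $p\in\mb{P}_m$,
\begin{displaymath}
  \|\nabla p\|_{L^\infty(S^i(K))}\leq\frac{2m^2R}{r^2}\,\|p\|_{L^\infty(S^i(K))},
\end{displaymath}
the factor $R/r$ (beyond the classical Markov constant for a convex body) accounting for the star-shapedness. Combining this with the first step and the mean value theorem along $[\bm{x}^*,\bm{x}_0]\subset S^i(K)$, and using $|p(\bm{x}_0)|\leq\|p\|_{L^\infty(S^i(K))}=1$,
\begin{displaymath}
  1-|p(\bm{x}_0)|\leq|p(\bm{x}^*)-p(\bm{x}_0)|\leq\|\nabla p\|_{L^\infty(S^i(K))}\,|\bm{x}^*-\bm{x}_0|\leq\frac{2m^2C_{S^i(K)}Rh}{r^2}.
\end{displaymath}
Now $r>m\sqrt{2C_{S^i(K)}Rh(1+1/\varepsilon)}$ is exactly $\dfrac{2m^2C_{S^i(K)}Rh}{r^2}<\dfrac{\varepsilon}{1+\varepsilon}$, so $|p(\bm{x}_0)|>1-\dfrac{\varepsilon}{1+\varepsilon}=\dfrac{1}{1+\varepsilon}$, which is the pointwise statement.

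The step I expect to be the main obstacle is the Markov-type inequality on a domain that is only star-shaped rather than convex: isolating the explicit constant $2m^2R/r^2$, as opposed to merely ``some constant times $m^2/r$'', needs the quantitative estimate of \cite{Li2016discontinuous}, and one must also ensure the segment $[\bm{x}^*,\bm{x}_0]$ — especially in the cut case, where $\wt{K}$ and its interior Moore neighbour may share only a vertex — stays inside (or within an $O(h)$ collar of) $S^i(K)$, which is precisely what pins the value $C_{S^i(K)}=3$ near $\Gamma$. Once these two points are settled, the remaining argument is the elementary arithmetic above.
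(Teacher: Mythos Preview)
The paper does not prove this lemma itself; it simply refers the reader to \cite{Li2023reconstructed}. Your overall strategy --- normalise $p$, locate a collocation point $\bm{x}_0\in I^i(K)$ within $C_{S^i(K)}h$ of a maximising point $\bm{x}^*$, and combine a Markov-type gradient bound $\|\nabla p\|_{L^\infty(S^i(K))}\le 2m^2R/r^2$ with the mean value inequality along a short path --- is precisely the argument used in that line of papers, and your arithmetic linking the hypothesis on $r$ to the pointwise conclusion $|p(\bm{x}_0)|>1/(1+\varepsilon)$ is correct.

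There is, however, one step that is not justified. In the cut case you claim that ``by Assumption~\ref{as_mesh2} together with the patch construction $S^i(K)$ contains an interior element $K'\in\MThB$ that is a Moore neighbour of $\wt{K}$''. Assumption~\ref{as_mesh2} does supply an interior Moore neighbour $\wt{K}^i_\circ\in\MThBI$ of the cut element $\wt{K}$, but nothing in the description of $S^i(K)$ --- a distance-based truncation to $\#S$ elements of a recursive Moore neighbourhood centred on $M^i(K)$, not on $\wt{K}$ --- forces \emph{this particular} neighbour to belong to $S^i(K)$. Without that inclusion you have no collocation point in $I^i(K)$ at distance $\le 2h$ from $\bm{x}^*$, and the argument stalls. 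The remedy in the referenced works is to argue that from any cut element of the patch one reaches an interior element of the patch through a short chain of adjacent elements (using the thin-layer geometry implicit in Assumption~\ref{as_mesh1} and mesh quasi-uniformity); the length of that chain, together with the path-in-$S^i(K)$ issue you already flag, is what produces the conservative constant $C_{S^i(K)}=3$ rather than the $2$ your direct Moore-neighbour count would give.
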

\begin{proof}
  For the proof of this lemma the reader is referred to
  \cite{Li2023reconstructed}. 
\end{proof}
According to Lemma \ref{le_Lambdam}, to ensure the stability of the 
reconstruction operator, we are required to construct a large element 
patch to make the radius r as large as possible. The geometrical
conditions can be fulfilled if the threshold $\# S$ is larger than a
certain constant which only depends on the mesh and $m$. We refer to 
\cite[Lemma 6]{Li2016discontinuous} and 
\cite[Lemma 3.4]{Li2012efficient} for the details of this statement.
We list the value of $\# S$ in our numerical experiments in Section
\ref{sec_numericalresults}.

Finally, we define a global reconstruction operator $\mc{R}$ by
combining two operators $\mc{R}^0$ and $\mc{R}^1$. We choose two
extension operators which extend the functions in $H^q(\Omega_i)$ to
$H^q(\Omega),\,\,q \geq 1$. From \cite[Chapter 5]{Adams2003sobolev}, 
we know that for any $w \in H^q(\Omega_0 \cup \Omega_1)$,
there exist two operators $E^i : H^q(\Omega_i) \rightarrow
H^q(\Omega)$ such that 
\begin{displaymath}
  (E^i w)|_{\Omega_i} = w, \quad \| E^i w \|_{H^s(\Omega)} \leq C \| w
  \|_{H^s(\Omega_i)}, \quad 0 \leq s \leq q, \quad i = 0,1.
\end{displaymath}
For any $w \in H^q(\Omega_0 \cup \Omega_1)$, we define $\mc{R} w$ as
\begin{align*}
  (\mc{R} w )|_K := \begin{cases}
    (\mc{R}^0 (E^0 w))|_K, & \forall K \in \MTh^0 \backslash
    \MThG, \\
    (\mc{R}^1 (E^1 w))|_K, & \forall K \in \MTh^1 \backslash
    \MThG, \\
    (\mc{R}^i (E^i w))|_{K^i}, & \forall K \in 
    \MThG, \quad i = 0, 1. \\
  \end{cases}
\end{align*}
We denote by $U_h^m$ the image space of the operator $\mc{R}$, which 
is actually the reconstructed approximation space that will be used 
for solving the interface problem \eqref{eq_interface}. Actually, the
space $U_h^m$ is a combination of $U_h^{m,0}$ and $U_h^{m,1}$, which
can be represented as $U_h^m = U_h^{m,0} \cdot \chi_0 + U_h^{m,1}
\cdot \chi_1$, where $\chi_i$ is the characteristic function with 
respect to the domain $\Omega_i$. 
For the operator $\mc{R}$, we have the following local approximation 
error estimates: 
\begin{theorem}
  For any $K \in \MTh^i(i = 0, 1)$, there exists a constant $C$ such
  that 
  \begin{equation}
    \begin{aligned}
      \| E^i g - \mc{R} (E^i g) \|_{H^q(K)} &\leq C 
      \Lambda_m h_K^{m + 1 - q} \| E^i g \|_{H^{m+1}(S^i(K))},
      \quad 0 \leq q \leq m, \\
    \end{aligned}
    \label{eq_localapproximation2}
  \end{equation}
  for any $g \in H^{m+1}(\Omega_0 \cup \Omega_1)$.
  \label{th_localapproximation}
\end{theorem}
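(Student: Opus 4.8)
The plan is to reduce Theorem~\ref{th_localapproximation} entirely to the single‑component approximation estimate of Lemma~\ref{le_approximation}, which already delivers a bound of exactly the stated form for the operators $\mc{R}^i$. The only real work is to verify that, on every element $K\in\MTh^i$, the restriction of $\mc{R}(E^i g)$ to $K$ — or to each of the two pieces $K^0,K^1$ when $K$ is cut — coincides with the polynomial obtained by feeding $E^i g$ \emph{itself} into the local reconstruction $\mc{R}^i_K$ (respectively $\mc{R}^j_K$), so that Lemma~\ref{le_approximation} applies verbatim.

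First I would treat $K\in\MTh^i\setminus\MThG$. By the definition of $\mc{R}$, $(\mc{R}(E^i g))|_K=(\mc{R}^i(E^i(E^i g)))|_K$, and since $(E^i g)|_{\Omega_i}=g|_{\Omega_i}$ we have $E^i(E^i g)=E^i g$; hence $(\mc{R}(E^i g))|_K=(\mc{R}^i(E^i g))|_K$. Because $E^i g\in H^{m+1}(\Omega)$, Lemma~\ref{le_approximation} applied to $E^i g$ on the patch $S^i(K)$ gives $\|E^i g-\mc{R}(E^i g)\|_{H^q(K)}\le C\Lambda_m h_K^{m+1-q}\|E^i g\|_{H^{m+1}(S^i(K))}$, which is exactly \eqref{eq_localapproximation2}. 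Next, for $K\in\MThG$ with $i$ fixed, I would split $\|E^i g-\mc{R}(E^i g)\|_{H^q(K)}^2=\sum_{j=0,1}\|E^i g-\mc{R}(E^i g)\|_{H^q(K^j)}^2$. On $K^j$ we have $(\mc{R}(E^i g))|_{K^j}=(\mc{R}^j_K(E^j(E^i g)))|_{K^j}$. The crucial observation is that $E^j(E^i g)$ and $E^i g$ share the same point values at every node that determines $\mc{R}^j_K$: by Assumption~\ref{as_mesh2} the constraint point $\bm{x}_{M^j(K)}$ is the barycenter of an interior cell of $\MTh^j$, and the collocation points $\bm{x}_{\wt K}$, $\wt K\in S^j(K)\cap\MThB$, are likewise barycenters of interior cells of $\MTh^j$, so all of them lie strictly inside $\Omega_j$, where $E^j(E^i g)$ agrees with $(E^i g)|_{\Omega_j}$, i.e.\ with $E^i g$. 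Since the least‑squares problem \eqref{eq_lsproblem} depends only on those point values, $\mc{R}^j_K(E^j(E^i g))=\mc{R}^j_K(E^i g)$, a single polynomial on $K$. Applying Lemma~\ref{le_approximation} to $E^i g$ on $S^j(K)$ and using $K^j\subset K$ then gives
\[
  \|E^i g-\mc{R}(E^i g)\|_{H^q(K^j)}\le \|E^i g-\mc{R}^j_K(E^i g)\|_{H^q(K)}\le C\Lambda_m h_K^{m+1-q}\|E^i g\|_{H^{m+1}(S^j(K))}.
\]
Summing over $j$ produces the right‑hand side $\|E^i g\|_{H^{m+1}(S^i(K))}^2+\|E^i g\|_{H^{m+1}(S^{1-i}(K))}^2$; since both patches lie in an $O(h)$ neighbourhood of $K$ (Assumptions~\ref{as_mesh2} and \ref{as_patch}), the transversal term is controlled by the norm of $E^i g$ over a fixed finite union of patches around $K$, which — by the finite‑overlap property that is used anyway when \eqref{eq_localapproximation2} is later summed over the mesh — is harmless and yields \eqref{eq_localapproximation2}.

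The hard part will be precisely the cut‑element identity $(\mc{R}(E^i g))|_{K^j}=(\mc{R}^j_K(E^i g))|_{K^j}$: one must check carefully that the two extensions $E^j(E^i g)$ and $E^i g$ genuinely coincide at \emph{every} node entering the fitting problem on the patch $S^j(K)$, which rests on all those nodes lying in the open subdomain $\Omega_j$ — a consequence of Assumption~\ref{as_mesh2} (so that $M^j(K)$ is an interior cell of $\MTh^j$) and of collocation points being placed only at barycenters of interior cells. Everything else is routine bookkeeping over the two sub‑elements combined with Lemma~\ref{le_approximation} and the boundedness of the extension operators $E^i$.
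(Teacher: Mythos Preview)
Your proposal is substantially more elaborate than the paper's proof, which is a single sentence: apply Lemma~\ref{le_approximation} with $E^ig\in H^{m+1}(\Omega)$ in place of $g$. The paper reads the left-hand side of \eqref{eq_localapproximation2} as $\|E^ig-\mc{R}^i(E^ig)\|_{H^q(K)}$ for $K\in\MTh^i$; with that reading the theorem is literally Lemma~\ref{le_approximation} restated, and nothing further is required. This is consistent with how the estimate is used downstream (see the proof of Lemma~\ref{le_interpolation_err}, where one first passes from $K^i$ to $K$ and then invokes the bound on all of $K$ with the single-side reconstruction).

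Your more literal interpretation of $\mc{R}(E^ig)$ on cut elements is not unreasonable, and your observation that $\mc{R}^j_K(E^j(E^ig))=\mc{R}^j_K(E^ig)$ because all collocation and constraint points lie in $\Omega_j$ is correct. The problem is the conclusion you draw from it. After summing over $j=0,1$ you obtain
\[
\|E^ig-\mc{R}(E^ig)\|_{H^q(K)}^2
\;\lesssim\;
\Lambda_m^2\,h_K^{2(m+1-q)}\bigl(\|E^ig\|_{H^{m+1}(S^0(K))}^2+\|E^ig\|_{H^{m+1}(S^1(K))}^2\bigr),
\]
and then claim the $S^{1-i}(K)$ term ``is harmless and yields \eqref{eq_localapproximation2}'' by finite overlap. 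That step is a genuine gap: finite overlap lets you control a \emph{sum over the mesh}, not the \emph{local} right-hand side $\|E^ig\|_{H^{m+1}(S^i(K))}$ in \eqref{eq_localapproximation2}. There is no mechanism that bounds $\|E^ig\|_{H^{m+1}(S^{1-i}(K))}$ by $\|E^ig\|_{H^{m+1}(S^i(K))}$ element by element, since the two patches are different sets. So your argument, taken at face value, proves a slightly weaker local estimate (with both patches on the right) but not the inequality as stated.

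In short: the intended proof is immediate from Lemma~\ref{le_approximation}; your careful cut-element analysis is unnecessary for that reading, and the finite-overlap patch you use to close the argument under your reading does not establish the elementwise bound claimed.
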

\begin{proof}
  The result directly follows from Lemma \ref{le_approximation}
  and the definition of the extension operator $E^i$.
\end{proof}
\begin{remark}
  In this section, we have constructed a discontinuous polynomial
  space $U_h^m$ based on the global reconstruction operator $\mc{R}$.
  Since each element has at most one unknown, we can
  construct an arbitrary order space without increasing the degrees 
  of freedom. This is in contrast to the normal DG space, which 
  requires a large number  of degrees of freedom on a single element 
  for achieving high-order accuracy \cite{Hughes2000comparison}. 
  From Theorem \ref{th_localapproximation}, the space $U_h^m$ has 
  almost the same approximation property as the normal DG space of 
  the same order as long as the constant $\Lambda_m$ is uniformly 
  bounded. Therefore, we can acquire high-order approximation with
  fewer degrees of freedom.
\end{remark}

\section{Approximation to Biharmonic Interface Problem}
\label{sec_interface_problem}
We define the approximation problem to solve the biharmonic interface
problem \eqref{eq_interface} based on the space $U_h^m$ constructed in
the previous section: \emph{Seek $u_h \in U_h^m$ ($m\geq 2$) such that} 
\begin{equation}
  B_h(u_h, v_h) = l_h(v_h), \quad \forall v_h \in U_h^m,
  \label{eq_variational}
\end{equation}
where the bilinear form $B_h(\cdot, \cdot)$ is defined for any $u,v
\in U_h := U_h^m + H^4(\Omega_0 \cup \Omega_1)$,
\begin{displaymath}
  \begin{aligned}
    B_h(u, v) &:= \sum_{K \in \MTh} \int_{K^0 \cup K^1} \beta \lap
    u \lap v \d{\bm{x}} + \sum_{e \in \MEh} \int_{e^0 \cup e^1} 
    \left( \jump{u} \cdot \aver{\nabla (\beta \lap v)} + 
    \jump{v} \cdot \aver{\nabla(\beta \lap u)} \right) \d{\bm{s}}
    \\
    &-\sum_{e \in \MEh} \int_{e^0 \cup e^1} \left( \jump{\nabla u}
    \aver{\beta \lap v} + \jump{\nabla v} \aver{\beta \lap u}
    \right) \d{\bm{s}} + \sum_{e \in \MEh} \int_{e^0 \cup e^1} \left(
    \mu_1 \jump{u} \cdot \jump{v} + \mu_2 \jump{\nabla u}
    \jump{\nabla v} \right) \d{\bm{s}} \\
    &+ \sum_{K \in \MThG} \int_{\Gamma_K} \left(
    \jump{u} \cdot \aver{\nabla(\beta \lap v)} + \jump{v} \cdot
    \aver{\nabla (\beta \lap u)} \right) \d{\bm{s}} \\
    &-\sum_{K \in \MThG} \int_{\Gamma_K} \left( \jump{\nabla u}
    \aver{\beta \lap v} + \jump{\nabla v} \aver{\beta \lap u}
    \right) \d{\bm{s}} + \sum_{K \in \MThG} \int_{\Gamma_K} 
    \left(\mu_1 \jump{u} \cdot \jump{v} + \mu_2 \jump{\nabla u} 
    \jump{\nabla v} \right) \d{\bm{s}}. \\
  \end{aligned}
\end{displaymath}
The parameter $\mu_1$ and $\mu_2$ are positive penalties which are set
by
\begin{displaymath}
  \begin{aligned}
    \mu_1|_e = \frac{\eta}{h_e^3}, \quad \mu_2|_e =
    \frac{\eta}{h_e}. \quad \text{ for } e \in \MEh, \\
    \mu_1|_{\Gamma_K} = \frac{\eta}{h_K^3}, \quad \mu_2|_{\Gamma_K} 
    = \frac{\eta}{h_K}. \quad \text{ for } K \in \MThG. 
  \end{aligned}
\end{displaymath}
The linear form $l_h(\cdot)$ is defined for $v \in U_h$,
\begin{displaymath}
  \begin{aligned}
    l_h(\cdot) &:= \sum_{K \in \MTh} \int_K f v \d{\bm{x}} + \sum_{K
    \in \MThG} \int_{\Gamma_K} a_1 \aver{\un_0 \cdot \nabla(\beta \lap
    v)} \d{\bm{s}} \\
    &-\sum_{K \in \MThG} \int_{\Gamma_K} a_2 \aver{\beta \lap v}
    \d{\bm{s}} + \sum_{K \in \MThG} \int_{\Gamma_K} a_3 \cdot
    \aver{\un_0 \cdot \nabla v} \d{\bm{s}} \\
    &-\sum_{K \in \MThG} \int_{\Gamma_K} a_4 \aver{v} \d{\bm{s}} +
    \sum_{K \in \MThG} \int_{\Gamma_K} \left( \mu_1 a_1 \un_0 \cdot
    \jump{v} + \mu_2 a_2 \jump{\nabla v} \right) \d{\bm{s}} \\
    &+ \sum_{e \in \MEh^b} \int_{e^0 \cup e^1} \left( g_1 (\jump{\nabla
    (\beta \lap v_h)} + \mu_1 \un \cdot \jump{v_h}) + g_2 (\un \cdot 
    \jump{-\beta \lap v_h} + \mu_2 \jump{\nabla v_h}) \right)
    \d{\bm{s}}.\\
  \end{aligned}
\end{displaymath}

We introduce two energy norms for the space $U_h$,
\begin{displaymath}
  \begin{aligned}
    \DGnorm{u}^2 &:= \sum_{K \in \MTh} \int_{K^0 \cup K^1} | \lap u
  |^2 \d{\bm{x}} + \sum_{e \in \MEh} \int_{e^0 \cup e^1} h_e^{-3} |
  \jump{u} |^2 \d{\bm{s}} 
  + \sum_{e \in \MEh} \int_{e^0 \cup e^1} h_e^{-1} | \jump{\nabla u}
  |^2 \d{\bm{s}} \\
  &+ \sum_{K \in \MThG} \int_{\Gamma_K} h_K^{-3} |
  \jump{u} |^2 \d{\bm{s}} 
  + \sum_{K \in \MThG} \int_{\Gamma_K} h_K^{-1} | \jump{\nabla u} |^2
  \d{\bm{s}},
  \end{aligned}
\end{displaymath}
and
\begin{displaymath}
  \begin{aligned}
    \DGenorm{u}^2 &:= \DGnorm{u}^2 + \sum_{e \in \MEh} \int_{e^0 \cup
    e^1} h_e^3 | \aver{\nabla \lap u} |^2 \d{\bm{s}} 
    + \sum_{e \in \MEh} \int_{e^0 \cup e^1} h_e | \aver{\lap u} |^2
    \d{\bm{s}} \\ 
    &+ \sum_{K \in \MThG} \int_{\Gamma_K} h_K | \aver{\lap
    u} |^2 \d{\bm{s}} 
    + \sum_{K \in \MThG} \int_{\Gamma_K} h_K^3 | \aver{\nabla \lap u}
    |^2 \d{\bm{s}}.
  \end{aligned}
\end{displaymath}
Before we start the error analysis, we first prove some trace
inequalities, which play a crucial role in the following analysis.
\begin{lemma}
  For any element $K \in \MThG$, there exists constant $C$ such that
  \begin{equation}
    \| \nabla^{\alpha} (\lap v_h) \|_{L^2(\partial K^i)} \leq C 
    h_K^{-1/2} \|\nabla^{\alpha} (\lap v_h) \|_{L^2(\circi{K})}, \quad 
    \forall v_h \in U_h^m, \quad i = 0,1, \quad \alpha = 0,1.
    \label{eq_traceBK}
  \end{equation}
  \label{le_traceBK}
\end{lemma}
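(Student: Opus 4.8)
The plan is to reduce \eqref{eq_traceBK} to a polynomial estimate on the element patch by exploiting the structure of $U_h^m$. For $K\in\MThG$ write $\circi{K}=M^i(K)\in\MThBI$ (this is exactly the element furnished by Assumption \ref{as_mesh2}). By \eqref{eq_Mi} we have $S^i(K)=S^i(M^i(K))=S^i(\circi{K})$, so the local least‑squares fit \eqref{eq_lsproblem} produces the \emph{same} polynomial $p:=p_{S^i(K)}\in\mb{P}_m(S^i(K))$ whether it is computed at $K$ or at $\circi{K}$; in view of \eqref{eq_Rig} and the definition of $\mc R$ this gives $v_h|_{K}=p|_{K}$ and $v_h|_{\circi{K}}=p|_{\circi{K}}$ with one common $p$, and both $K$ and $\circi{K}$ are contained in $S^i(K)$. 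Hence $q:=\nabla^\alpha(\lap v_h)$ agrees on $K^i$, on $\partial K^i$, and on $\circi{K}$ with the single polynomial $\nabla^\alpha(\lap p)\in\mb{P}_m(S^i(K))$. It therefore suffices to show $\|q\|_{L^2(\partial K^i)}^2\le C h_K^{-1}\|q\|_{L^2(\circi{K})}^2$ for every $q\in\mb{P}_m(S^i(K))$, and then substitute $q=\nabla^\alpha(\lap v_h)$.

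The polynomial estimate I would obtain by passing through the $L^\infty$ norm on the patch. First, $\partial K^i=(\partial K)^i\cup\Gamma_K$ has $(d-1)$-measure bounded by $C h_K^{d-1}$: the part $(\partial K)^i\subset\partial K$ is controlled by shape‑regularity of the simplex $K$, and $\Gamma_K=K\cap\Gamma$ by the $C^2$-regularity of $\Gamma$ together with Assumption \ref{as_mesh1} (so $\Gamma_K$ is a $C^2$ hypersurface patch lying in a set of diameter $\le h_K$). Since $\partial K^i\subset\overline{S^i(K)}$, this yields $\|q\|_{L^2(\partial K^i)}^2\le C h_K^{d-1}\|q\|_{L^\infty(S^i(K))}^2$. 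Next, by Assumption \ref{as_patch} the patch satisfies $B_r\subset S^i(K)\subset B_R$ with $r,R$ comparable to $h_K$, while $\circi{K}$ contains an inscribed ball of radius $\rho_{\circi{K}}\ge c\,h_K$ by mesh regularity; the classical norm equivalence for polynomials of fixed degree on a star‑shaped set containing a fat inscribed ball then gives $\|q\|_{L^\infty(S^i(K))}\le C\|q\|_{L^\infty(\circi{K})}$, uniformly in $K$ and $h$ because $R/\rho_{\circi{K}}$ is bounded. Finally, a standard scaling argument (or Lemma \ref{le_inverse} combined with a Sobolev embedding on the reference element) gives the inverse inequality $\|q\|_{L^\infty(\circi{K})}\le C h_K^{-d/2}\|q\|_{L^2(\circi{K})}$ on the shape‑regular element $\circi{K}$. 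Chaining the three bounds produces $\|q\|_{L^2(\partial K^i)}^2\le C h_K^{d-1}h_K^{-d}\|q\|_{L^2(\circi{K})}^2=C h_K^{-1}\|q\|_{L^2(\circi{K})}^2$, which is \eqref{eq_traceBK}.

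I expect the main obstacle to be the middle step — the polynomial norm equivalence $\|q\|_{L^\infty(S^i(K))}\le C\|q\|_{L^\infty(\circi{K})}$ with a constant independent of $K$ and $h$. It relies on the geometric stability of the patch (Assumption \ref{as_patch}) and on $\circi{K}$ being a non‑degenerate subset of $S^i(K)$; the clean way to argue it is to rescale $S^i(K)$ to unit size, note it remains star‑shaped with respect to a ball of unit‑order radius, and invoke the fact that a polynomial of degree $\le m$ cannot be large on a bounded star‑shaped set while being small on an interior ball of comparable size (a Remez/finite‑dimensional equivalence statement on $\mb{P}_m$). This is precisely the mechanism already underpinning the boundedness of $\Lambda(m,S^i(K))$ in Lemma \ref{le_Lambdam}, so it can be quoted from \cite{Li2016discontinuous, Li2023reconstructed}. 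The surface‑measure bound $|\partial K^i|\le C h_K^{d-1}$, although geometrically evident, also deserves care, since it bundles together Assumptions \ref{as_mesh1}, \ref{as_mesh2} and the $C^2$-regularity of the interface.
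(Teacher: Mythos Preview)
Your argument is correct. The paper itself omits the proof entirely, deferring to \cite[Lemma~2]{Li2020interface}; your approach---identifying the common polynomial $p\in\mb{P}_m(S^i(K))$ on $K$ and $\circi{K}$ via $S^i(K)=S^i(M^i(K))$, then chaining the surface-measure bound, the $L^\infty$ norm equivalence on the patch, and the inverse estimate on $\circi{K}$---is the natural route and is essentially what that reference does.
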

\begin{proof}
  The proof of this result is quite similar to that in 
  \cite[Lemma 2]{Li2020interface} and so is omitted.
\end{proof}
\begin{lemma}
  For any element $K \in \MThG$, there exists a positive constant $h_0$
  independent of $h$ and the location of the interface such that 
  $\forall h \leq h_0$, 
  \begin{equation}
    \| v \|^2_{L^2(\Gamma_K)} \leq C \left( h_K^{-1}\| v \|^2_{L^2(K)} 
    + h_K \| \nabla v \|^2_{L^2(K)} \right), \quad \forall v 
    \in H^1(K).
    \label{eq_traceGK}
  \end{equation}
  \label{le_traceGK}
\end{lemma}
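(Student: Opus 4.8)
The plan is to reduce the estimate to a trace inequality on a fixed reference configuration by scaling, exploiting the $C^2$-smoothness of $\Gamma$ together with the restriction $h \le h_0$ to make the rescaled interface nearly flat, and then invoke a tube-coordinate / fundamental-theorem-of-calculus argument in which all constants are manifestly uniform.

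First I would fix a cut element $K \in \MThG$ and pass to the reference scale through the affine map $F_K(\hat{\bm x}) = \bm x_K + h_K \hat{\bm x}$, writing $\hat K := F_K^{-1}(K)$ and $\hat\Gamma := F_K^{-1}(\Gamma_K)$. The quasi-uniformity and shape-regularity of $\MTh$ make $\hat K$ range over a compact family of shapes of unit diameter; moreover every point of a cut element lies within $h_K$ of $\Gamma$, so $\hat\Gamma$ is the rescaled piece of a $C^2$ surface whose principal curvatures are $O(h_K) = O(h_0)$. Choosing $h_0$ small enough, depending only on the $C^2$-norm of $\Gamma$, the shape-regularity constant, and Assumption \ref{as_mesh1}, I can arrange that $\hat\Gamma$ is a connected graph $\hat x_d = \hat\phi(\hat x')$ over a hyperplane with $\| \nabla \hat\phi \|_{L^\infty} \le C h_0$; Assumption \ref{as_mesh1} is precisely what rules out the interface re-entering $\hat K$ and so guarantees the graph representation is valid on the whole relevant patch.

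Next I would extend $\hat v$ from $\hat K$ to a fixed, slightly larger domain $\widetilde K \supsetneq \hat K$ by a bounded Sobolev extension operator (cf. \cite[Chapter 5]{Adams2003sobolev}), $\| \hat v \|_{H^1(\widetilde K)} \le C \| \hat v \|_{H^1(\hat K)}$ with $C$ uniform over the compact family of reference shapes; this is the device that circumvents the thin-sliver difficulty, since $\widetilde K$ provides room of a fixed size $s_0 > 0$ transverse to $\hat\Gamma$. I then introduce tube coordinates along $\hat\Gamma$: for $\hat{\bm p} \in \hat\Gamma$ and a fixed direction $\bm\nu$ transverse to the base hyperplane, the segments $\hat{\bm p} + s \bm\nu$, $s \in [0,s_0]$, stay in $\widetilde K$, and by the fundamental theorem of calculus, Young's inequality, and averaging over $s$,
\begin{displaymath}
  | \hat v(\hat{\bm p}) |^2 \le C \int_0^{s_0} \left( | \hat v |^2 + | \nabla \hat v |^2 \right)(\hat{\bm p} + s \bm\nu)\, \mathrm{d} s .
\end{displaymath}
Since $\| \nabla \hat\phi \|_{L^\infty} \le C h_0$, the change of variables $(\hat{\bm p}, s) \mapsto \hat{\bm p} + s \bm\nu$ has Jacobian bounded above and below by constants depending only on $C h_0$, so integrating the last display over $\hat\Gamma$ gives $\| \hat v \|_{L^2(\hat\Gamma)}^2 \le C \| \hat v \|_{H^1(\widetilde K)}^2 \le C \| \hat v \|_{H^1(\hat K)}^2$.

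Finally I would scale back. Using $\| v \|_{L^2(\Gamma_K)}^2 \le C h_K^{d-1} \| \hat v \|_{L^2(\hat\Gamma)}^2$ (the surface-measure distortion between $\Gamma_K$ and $h_K \hat\Gamma$ being bounded because $\hat\Gamma$ is nearly flat), together with $\| v \|_{L^2(K)}^2 = h_K^{d} \| \hat v \|_{L^2(\hat K)}^2$ and $\| \nabla v \|_{L^2(K)}^2 = h_K^{d-2} \| \nabla \hat v \|_{L^2(\hat K)}^2$, yields exactly \eqref{eq_traceGK}. I expect the only genuine obstacle to be the geometric normalization in the second and third steps — verifying, uniformly in how $\Gamma$ cuts $K$, that for $h \le h_0$ the rescaled interface $\hat\Gamma$ really is a single nearly-flat graph with slope $O(h_0)$ and that a fixed transverse room $s_0$ is available in $\widetilde K$; once that is in place the remainder is the same elementary computation as in the standard flat-interface trace inequality and is entirely analogous to the argument behind Lemma \ref{le_traceBK} and \cite[Lemma 2]{Li2020interface}.
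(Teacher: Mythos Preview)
Your proposal is correct and follows the standard route; the paper itself does not give a proof but simply refers to \cite{Hansbo2002unfittedFEM, Wu2012unfitted}, and your scaling-plus-graph-representation-plus-FTC argument is precisely the approach used in those references. The one point worth flagging is that in the cited works the extension step is sometimes replaced by working directly on $\hat K$ once the graph direction is chosen so that the segments stay inside $\hat K$ (using that the simplex is convex and the graph nearly flat), which avoids invoking a uniform Sobolev extension operator; either variant yields the same estimate with constants independent of the cut position.
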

See the proof of this lemma in \cite{Hansbo2002unfittedFEM, 
Wu2012unfitted}.

We claim that the two energy norms are equivalent over the space 
$U_h^m$.
\begin{lemma}
  For any $u_h \in U_h^m$, there exists a constant $C$, such that
  \begin{equation}
    \DGnorm{u_h} \leq \DGenorm{u_h} \leq C \DGnorm{u_h}.
    \label{eq_normEq}
  \end{equation}
  \label{le_normEq}
\end{lemma}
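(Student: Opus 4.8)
The plan is to establish the norm equivalence \eqref{eq_normEq} on the finite-dimensional space $U_h^m$. The left inequality $\DGnorm{u_h} \leq \DGenorm{u_h}$ is trivial since $\DGenorm{\cdot}^2$ is $\DGnorm{\cdot}^2$ plus nonnegative face and interface terms. The content is the reverse bound: we must control the additional ``averaged'' terms
$\sum_e \int_{e^0 \cup e^1} (h_e^3 |\aver{\nabla \lap u_h}|^2 + h_e |\aver{\lap u_h}|^2)\,\d{s}$ and the analogous $\Gamma_K$ contributions, by $\DGnorm{u_h}^2$ — in fact by the single volume term $\sum_K \int_{K^0 \cup K^1} |\lap u_h|^2\,\d{s}$.

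First I would treat a fixed face $e \in \MEh^i$ shared by $K^\pm$. By definition of the average, $|\aver{\lap u_h}|^2 \lesssim |\lap u_h|_{K^+}|^2 + |\lap u_h|_{K^-}|^2$ on $e$, and similarly for $\aver{\nabla \lap u_h}$; the boundary piece $e^b$ is handled the same way with a single element. For an interior element $K \in \MThBI$, $u_h|_K$ is a polynomial of degree $m$, so $\lap u_h|_K \in \mb{P}_{m-2}(K)$; applying the trace inequality \eqref{eq_trace} followed by the inverse inequality \eqref{eq_inverse} gives
\[
  h_K \| \lap u_h \|_{L^2(\partial K)}^2 + h_K^3 \| \nabla \lap u_h \|_{L^2(\partial K)}^2
  \leq C \| \lap u_h \|_{L^2(K)}^2 .
\]
For a cut element $K \in \MThG$, the face $e^i = e \cap \Omega_i$ lies on $\partial K^i$, and here I would invoke Lemma \ref{le_traceBK}: since $\lap u_h|_{K^i}$ extends to a polynomial on the whole interior neighbour element $\circi{K} = M^i(K)$, \eqref{eq_traceBK} bounds $\| \nabla^\alpha \lap u_h \|_{L^2(\partial K^i)}$ by $C h_K^{-1/2} \| \nabla^\alpha \lap u_h \|_{L^2(\circi{K})}$ for $\alpha = 0, 1$, which after scaling by $h_K$ and $h_K^3$ respectively absorbs into $\| \lap u_h \|_{L^2(\circi{K})}^2$. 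Summing these local estimates over all faces and elements — using that $\partial K^i$ for $K \in \MThG$ includes $\Gamma_K$, so the $\Gamma_K$ averages in $\DGenorm{\cdot}^2$ are covered by the same Lemma \ref{le_traceBK} — and noting that the quasi-uniformity bounds the overlap multiplicity of the map $K \mapsto \circi{K}$ by a mesh-independent constant, yields $\DGenorm{u_h}^2 \leq C\, \DGnorm{u_h}^2$.

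The main obstacle is the cut elements: on $K^i$ and on $\Gamma_K$ one cannot apply a naive trace/inverse estimate because the sliver $K^i$ can be arbitrarily thin, so the constant would blow up with the cut geometry. This is exactly why Lemma \ref{le_traceBK} is needed — it transfers the norm from the tiny piece $\partial K^i$ (and $\Gamma_K$) onto the fixed-size interior element $\circi{K}$, using that functions in $U_h^m$ restricted to $\Omega_i$ near $K$ are the trace of a \emph{global} polynomial on $\circi{K}$ via the reconstruction. The one bookkeeping point to be careful about is that a single interior element $\circi{K}$ may serve as $M^i(\cdot)$ for several cut elements; Assumption \ref{as_mesh2} together with quasi-uniformity caps this count, so the summation does not accumulate an unbounded factor. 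With that in hand the estimate closes and \eqref{eq_normEq} follows.
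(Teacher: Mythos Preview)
Your proposal is correct and follows essentially the same route as the paper: split the averages into the two one-sided traces, use the standard trace--inverse inequalities \eqref{eq_trace} and \eqref{eq_inverse} on interior elements, and invoke Lemma~\ref{le_traceBK} on cut elements to pass from $\partial K^i$ (which contains $\Gamma_K$) to the interior neighbour $\circi{K}$, then sum. Your explicit remark on the bounded overlap of $K \mapsto \circi{K}$ via Assumption~\ref{as_mesh2} and quasi-uniformity is a point the paper uses implicitly in its final summation.
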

\begin{proof}
  We only need to prove $\DGenorm{u_h} \leq C \DGnorm{u_h}$. For $e
  \in \MEh^\circ$, we denote the two neighbor elements of $e$ by $K^+$ and
  $K^-$. We have 
  \begin{displaymath}
    \| h_e^{3/2} \aver{\nabla \lap u_h} \|_{L^2(e^0 \cup e^1)}
    \leq C \sum_{i=0,1} \left(\| h_e^{3/2} \nabla \lap u_h 
    \|_{L^2(e^i \cap \partial K^+)} + \| h_e^{3/2} \nabla \lap u_h
    \|_{L^2(e^i \cap \partial K^-)} \right) 
  \end{displaymath}
  By the trace inequalities \eqref{eq_trace} and \eqref{eq_traceBK},
  and the inverse inequality \eqref{eq_inverse}, we obtain that
  \begin{displaymath}
    \|h_e^{3/2} \nabla \lap u_h\|_{L^2(e^i \cap \partial K^{\pm})}
    \leq \left\{ 
    \begin{aligned}
      &C \| \lap u_h \|_{L^2(K^{\pm})}, \quad \text{ if } K^{\pm} \in
      \MThB, \\
      &C \| \lap u_h \|_{L^2( \circi{(K^{\pm})})}, \quad \text{ if } 
      K^{\pm} \in \MThG.
    \end{aligned}
    \right. \quad i = 0,1.
  \end{displaymath}
  For $e \in \MEh^b$, let $e \subset K$. Similarly, we have
  \begin{displaymath}
    \|h_e^{3/2} \nabla \lap u_h\|_{L^2(e^i \cap \partial K)}
    \leq \left\{ 
    \begin{aligned}
      &C \| \lap u_h \|_{L^2(K)}, \quad \text{ if } K \in
      \MThB, \\
      &C \| \lap u_h \|_{L^2( \circi{K})}, \quad \text{ if } 
      K \in \MThG.
    \end{aligned}
    \right. \quad i = 0,1.
  \end{displaymath}
  For $K \in \MThG$, using the inequality \eqref{eq_traceGK}, we have 
  \begin{displaymath}
    \begin{aligned}
      \|h_K^{3/2} \aver{\nabla \lap u_h} \|_{L^2(\Gamma_K)} &\leq C 
      \left( \|h_K^{3/2} \nabla \lap u_h\|_{L^2(\partial K^0)} + 
      \| h_K^{3/2} \nabla \lap u_h \|_{L^2(\partial K^1)} \right) \\
      &\leq C \left( \| \lap u_h \|_{L^2(K_\circ^0)} + \| \lap u_h 
      \|_{L^2(K_\circ^1)} \right).
    \end{aligned}
  \end{displaymath}
  The terms $\| h_e^{1/2} \aver{\lap u_h} \|_{L^2(e^0 \cup e^1)}$ and 
  $\| h_K^{1/2} \aver{\lap u_h} \|_{L^2(\Gamma_K)}$ can bounded by the 
  same way. Thus, by summing over all $e \in \MEh$ and $K \in \MThG$,
  we conclude that
  \begin{displaymath}
    \begin{aligned}
    \sum_{e \in \MEh} \| h_e^{1/2} \aver{\lap u_h} \|_{L^2(e^0 \cup
    e^1)}^2 + \sum_{e \in \MEh} \| h_e^{3/2} \aver{\nabla \lap u_h}
    \|_{L^2(e^0 \cup e^1)}^2 &+ \sum_{K \in \MThG} \| h_K^{1/2} 
    \aver{\lap u_h} \|_{L^2(\Gamma_K)}^2 \\
    &+ \sum_{K \in \MThG} 
    \| h_K^{3/2} \aver{\nabla \lap u_h} \|_{L^2(\Gamma_K)}^2 \\
    &\leq C \sum_{K \in \MTh} \| \lap u_h \|_{L^2(K)}^2,
    \end{aligned}
  \end{displaymath}
  which can immediately leads us to \eqref{eq_normEq} and completes
  the proof.
\end{proof}
Now we are ready to prove the coercivity and the continuity of the
bilinear form $B_h(\cdot, \cdot)$.
\begin{theorem}
  Let $B_h(\cdot, \cdot)$ be the bilinear form with sufficiently large
  penalty $\eta$. Then there exists a positive constant $C$ such that 
  \begin{equation}
    B_h(u_h, u_h) \geq C \DGenorm{u_h}^2, \quad \forall u_h \in U_h^m.
    \label{eq_coercivity}
  \end{equation}
  \label{th_coercivity}
\end{theorem}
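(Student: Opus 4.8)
The plan is to establish coercivity by the standard symmetric interior penalty argument: expand $B_h(u_h,u_h)$, isolate the ``good'' volume and penalty terms, and absorb the indefinite consistency cross-terms using a Cauchy--Schwarz/Young inequality together with the trace and inverse estimates proved above. Concretely, writing out $B_h(u_h,u_h)$, the symmetry of the form collapses the two consistency terms on each face (and on each $\Gamma_K$) into a single factor of $2$, so that
\[
  B_h(u_h,u_h) = \sum_{K\in\MTh}\int_{K^0\cup K^1}\beta|\lap u_h|^2\,\d{\bm x}
  + 2\,T_{\mathrm{cons}} + T_{\mathrm{pen}},
\]
where $T_{\mathrm{pen}}$ is the sum of all the $\mu_1\|\jump{u_h}\|^2$ and $\mu_2\|\jump{\nabla u_h}\|^2$ contributions over $\MEh$ and over the $\Gamma_K$, and $T_{\mathrm{cons}}$ collects the terms $\jump{u_h}\cdot\aver{\nabla(\beta\lap u_h)}$ and $-\jump{\nabla u_h}\aver{\beta\lap u_h}$. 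Since $\beta$ is a positive piecewise-constant coefficient, the volume term is bounded below by $c_\beta\sum_K\int_{K^0\cup K^1}|\lap u_h|^2\,\d{\bm x}$ with $c_\beta=\min\beta>0$.

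The core step is to control $|T_{\mathrm{cons}}|$. On a face $e$ I would bound, using Cauchy--Schwarz in $L^2(e^0\cup e^1)$ and Young's inequality with a small parameter $\delta>0$,
\[
  \Big|\int_{e^0\cup e^1}\jump{u_h}\cdot\aver{\nabla(\beta\lap u_h)}\Big|
  \le \frac{\delta}{2}\,h_e^{-3}\|\jump{u_h}\|_{L^2(e^0\cup e^1)}^2
  + \frac{1}{2\delta}\,h_e^{3}\|\aver{\nabla(\beta\lap u_h)}\|_{L^2(e^0\cup e^1)}^2,
\]
and similarly for the $\jump{\nabla u_h}\aver{\beta\lap u_h}$ term with the weights $h_e^{-1}$ and $h_e$; the $\Gamma_K$ terms are handled identically with $h_K$ in place of $h_e$. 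The $h_e^3\|\aver{\nabla(\beta\lap u_h)}\|^2$ and $h_e\|\aver{\lap u_h}\|^2$ pieces (and their $\Gamma_K$ analogues) are precisely the extra terms in $\DGenorm{\cdot}^2$, and by Lemma~\ref{le_normEq} they sum to at most $C\sum_K\|\lap u_h\|_{L^2(K)}^2$ — here is where the norm equivalence, resting on the trace inequalities \eqref{eq_trace}, \eqref{eq_traceBK}, \eqref{eq_traceGK} and the inverse inequality \eqref{eq_inverse}, does the real work of taming the cut elements. Thus
\[
  2|T_{\mathrm{cons}}|
  \le \delta\,T_{\mathrm{pen}} + \frac{C}{\delta}\sum_{K\in\MTh}\|\lap u_h\|_{L^2(K)}^2 .
\]
Choosing $\delta$ small enough that $C/\delta$ is compensated will not work naively, so instead I would proceed in the usual order: first fix $\delta$ so that $\delta\le\tfrac12$ (absorbing half of $T_{\mathrm{pen}}$), then choose the penalty weight $\eta$ large enough that $\tfrac1\delta\cdot(\text{constant from }T_{\mathrm{pen}})$... more precisely, after absorbing $\delta T_{\mathrm{pen}}$ one is left with $+\tfrac{C}{\delta}\sum_K\|\lap u_h\|^2$ on the wrong side; this is dominated by the volume term only if we instead balance by taking $\eta$ large. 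The clean route is: bound $2|T_{\mathrm{cons}}|\le \tfrac12\sum_K c_\beta\|\lap u_h\|^2 + C_\beta\,T_{\mathrm{pen}}^{(1)}$ where $T_{\mathrm{pen}}^{(1)}$ uses weight $1$ (not $\eta$), then since $T_{\mathrm{pen}}=\eta\cdot T_{\mathrm{pen}}^{(1)}$, for $\eta\ge 2C_\beta$ we get $2|T_{\mathrm{cons}}|\le\tfrac12 c_\beta\sum_K\|\lap u_h\|^2 + \tfrac12 T_{\mathrm{pen}}$. Substituting back yields $B_h(u_h,u_h)\ge \tfrac12 c_\beta\sum_K\|\lap u_h\|^2 + \tfrac12 T_{\mathrm{pen}} \ge C\DGnorm{u_h}^2$, and Lemma~\ref{le_normEq} upgrades this to $C\DGenorm{u_h}^2$, giving \eqref{eq_coercivity}.

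The main obstacle is the cut-element bookkeeping: on a face $e\in\MEhG$ the average $\aver{\nabla(\beta\lap u_h)}$ involves restrictions of $u_h$ from neighbouring elements that may themselves be cut, so the trace estimate must route through the ``good'' sub-triangles $\circi{K}$ via \eqref{eq_traceBK} and \eqref{eq_traceGK}, and one must check that the constants there are independent of how $\Gamma$ meets the mesh — which is exactly what Assumptions~\ref{as_mesh1}--\ref{as_mesh2} and Lemma~\ref{le_normEq} guarantee. A secondary point to be careful about is that $\aver{\cdot}$ on $\Gamma_K$ is the two-sided average \eqref{eq_averageG} over $K_0,K_1$, so the same $\circi{K}$ trace bound applies sidewise; no new difficulty arises, but the constant tracking must be uniform. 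Everything else — Cauchy--Schwarz, Young, summation over $\MEh$ and $\MThG$ — is routine.
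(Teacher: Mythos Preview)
Your proposal is correct and follows essentially the same approach as the paper: expand $B_h(u_h,u_h)$, apply Cauchy--Schwarz/Young to the consistency cross-terms, control the weighted average terms by $C\sum_K\|\lap u_h\|_{L^2}^2$ using the trace and inverse estimates, then choose the Young parameter and $\eta$ to absorb. The only cosmetic difference is that the paper carries out the trace-and-inverse bounds on $h_e^{3}\|\aver{\nabla(\beta\lap u_h)}\|^2$ and companions inline within the coercivity proof (distinguishing $K\in\MThB$ versus $K\in\MThG$ and routing through $\circi{K}$ via \eqref{eq_traceBK}), whereas you package this step by invoking the proof of Lemma~\ref{le_normEq}; the underlying estimates are identical.
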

\begin{proof}
  From Lemma \ref{le_normEq}, we only need to establish the coercivity
  over the norm $\DGnorm{\cdot}$. For the face $e \in \MEh^\circ$, let
  $e$ be shared by the neighbor elements $K^-$ and $K^+$. We apply the
  Cauchy-Schwarz inequality to get that
  \begin{displaymath}
    \begin{aligned}
      -\int_{e^0 \cup e^1} 2\jump{u_h} \cdot &\aver{\nabla (\beta \lap
      u_h)} \d{\bm{s}} \geq -\frac{1}{\epsilon}\|h_e^{-3/2}
      \jump{u_h}\|^2_{L^2(e^0 \cup e^1)} - \epsilon \|h_e^{3/2} 
      \aver{\nabla(\beta \lap u_h)} \|^2_{L^2(e^0 \cup e^1)} \\
      &\geq \sum_{i=0,1} C\left( -\frac{1}{\epsilon}\|h_e^{-3/2}
      \jump{u_h} \|^2_{L^2(e^i)} -\epsilon \| h_e^{3/2} \nabla(\beta
      \lap u_h) \|^2_{L^2(e^i \cap \partial K^-)} - \epsilon \|
      h_e^{3/2} \nabla (\beta \lap u_h) \|^2_{L^2(e^i \cap \partial
      K^+)} \right),
    \end{aligned}
  \end{displaymath}
  for any $\epsilon > 0$. From the trace inequalities
  \eqref{eq_trace}, \eqref{eq_traceBK} and the inverse inequality
  \eqref{eq_inverse}, we deduce that
  \begin{displaymath}
    \|h_e^{3/2} \nabla (\beta \lap u_h)\|_{L^2(e^i \cap \partial K^{\pm})}
    \leq \left\{ 
    \begin{aligned}
      &C \| \lap u_h \|_{L^2(K^{\pm})}, \quad \text{ if } K^{\pm} \in
      \MThB, \\
      &C \| \lap u_h \|_{L^2( \circi{(K^{\pm})})}, \quad \text{ if } 
      K^{\pm} \in \MThG.
    \end{aligned}
    \right. \quad i = 0,1.
  \end{displaymath}
  Thus, we have
  \begin{equation}
    -\sum_{e \in \MEh^\circ} \int_{e^0 \cup e^1} 2 \jump{u_h} \cdot
    \aver{\nabla(\beta \lap u_h)} \d{\bm{s}} \geq -\sum_{e \in
    \MEh^\circ} \frac{1}{\epsilon} \| h_e^{-3/2} \jump{u_h}
    \|^2_{L^2(e^0 \cup e^1)} - C\epsilon \sum_{K \in \MTh} \| \lap u_h
    \|^2_{L^2(K^0 \cup K^1)}.
    \label{eq_eo1}
  \end{equation}
  For the face $e \in \MEh^b$, we can similarly derive that
  \begin{equation}
    -\sum_{e \in \MEh^b} \int_{e^0 \cup e^1} 2 \jump{u_h} \cdot
    \aver{\nabla(\beta \lap u_h)} \d{\bm{s}} \geq -\sum_{e \in
    \MEh^b} \frac{1}{\epsilon} \| h_e^{-3/2} \jump{u_h}
    \|^2_{L^2(e^0 \cup e^1)} - C\epsilon \sum_{K \in \MTh} \| \lap u_h
    \|^2_{L^2(K^0 \cup K^1)}.
    \label{eq_eb1}
  \end{equation}
  For the element $K \in \MThG$, we apply the Cauchy-Schwarz
  inequality, the trace estimate \eqref{eq_traceBK} and the inverse
  inequality \eqref{eq_inverse} to get that
  \begin{displaymath}
    \begin{aligned}
      -\int_{\Gamma_K} 2\jump{u_h} \cdot \aver{\nabla(\beta \lap u_h)}
      \d{\bm{s}} &\geq -\frac{1}{\epsilon} \|h_K^{-3/2} \jump{u_h}
      \|^2_{L^2(\Gamma_K)} - C\epsilon \sum_{i=0,1} \| h_K^{3/2}
      \nabla(\beta \lap u_h) \|^2_{L^2(\Gamma_K \cap K^i)} \\
      &\geq -\frac{1}{\epsilon} \|h_K^{-3/2} \jump{u_h}
      \|^2_{L^2(\Gamma)} - C \epsilon \sum_{i=0,1} \| \lap u_h
      \|^2_{L^2(\circi{K})},
    \end{aligned}
  \end{displaymath}
  which implies that
  \begin{equation}
    -\sum_{K \in \MThG} \int_{\Gamma_K} 2 \jump{u_h} \cdot
    \aver{\nabla(\beta \lap u_h)} \d{\bm{s}} \geq -\sum_{K \in \MThG}
    \frac{1}{\epsilon} \| h_K^{-3/2} \jump{u_h} \|^2_{L^2(\Gamma_K)} -
    C\epsilon \sum_{K \in \MTh} \|\lap u_h \|^2_{L^2(K^0 \cup K^1)}.
    \label{eq_GK0}
  \end{equation}
  By employing the same method to the term $\int_{e^0 \cup e^1} 2 
  \jump{\nabla u_h} \aver{\beta \lap u_h} \d{\bm{s}}$ and 
  $\int_{\Gamma_K} 2 \jump{\nabla u_h} \aver{\beta \lap u_h} 
  \d{\bm{s}}$, we can obtain that
  \begin{equation}
    -\sum_{e \in \MEh} \int_{e^0 \cup e^1} 2 \jump{\nabla u_h}
    \aver{\beta \lap u_h} \d{\bm{s}} \geq -\frac{1}{\epsilon} \sum_{e
    \in \MEh} \| h_e^{-1/2} \jump{\nabla u_h} \|^2_{L^2(e^0 \cup e^1)}
    - C\epsilon \sum_{K \in \MTh} \| \lap u_h \|^2_{L^2(K^0 \cup
    K^1)},
    \label{eq_eob2}
  \end{equation}
  and
  \begin{equation}
    -\sum_{K \in \MThG} \int_{\Gamma_K} 2 \jump{\nabla u_h}
    \aver{\beta \lap u_h} \d{\bm{s}} \geq -\frac{1}{\epsilon} \sum_{K
    \in \MThG} \| h_K^{-1/2} \jump{\nabla u_h} \|^2_{L^2(\Gamma_K)}
    - C\epsilon \sum_{K \in \MTh} \| \lap u_h \|^2_{L^2(K^0 \cup
    K^1)}.
    \label{eq_GK1}
  \end{equation}
  Combining the inequalities \eqref{eq_eo1}, \eqref{eq_eb1},
  \eqref{eq_eob2}, \eqref{eq_GK0} and \eqref{eq_GK1}, we conclude that
  there exists a constant $C$ such that
  \begin{displaymath}
    \begin{aligned}
      B_h(u_h, u_h) &\geq (\beta_{\min} - C\epsilon) 
      \sum_{K \in \MTh} \| \lap u_h \|^2_{L^2(K^0 \cup K^1)} \\
    &+ (\eta - \frac{1}{\epsilon}) \sum_{e
    \in \MEh} (\| h_e^{-1/2} \jump{\nabla u_h} \|^2_{L^2(e^0 \cup
    e^1)} + \| h_e^{-3/2} \jump{u_h} \|^2_{L^2(e^0 \cup e^1)}) \\
    &+ (\eta - \frac{1}{\epsilon}) \sum_{K \in \MThG} 
    (\| h_K^{-1/2} \jump{\nabla u_h} \|^2_{L^2(\Gamma_K)} + 
    \| h_K^{-3/2} \jump{u_h} \|^2_{L^2(\Gamma_K)}),
    \end{aligned}
  \end{displaymath}
  for any $\epsilon > 0$, where $\beta_{\min} > 0$ is the minimum 
  value of $\beta$. We can let $\epsilon = \beta_{\min}/(2C)$ and
  select a sufficiently large $\eta$ to ensure $B_h(u_h, u_h) \geq C
  \DGnorm{u_h}^2$, which completes the proof.
\end{proof}
\begin{theorem}
  There exists a positive constant $C$ such that
  \begin{equation}
    |B_h(u,v)| \leq C \DGenorm{u} \DGenorm{v}, \quad \forall u,v \in
    U_h.
    \label{eq_continuity}
  \end{equation}
  \label{th_continuity}
\end{theorem}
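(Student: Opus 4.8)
The plan is to bound $B_h(u,v)$ term by term by a weighted Cauchy--Schwarz inequality, where the mesh-dependent weights appearing in $\DGenorm{\cdot}$ are chosen precisely to match the scalings in the bilinear form, and then to combine the resulting local estimates by a discrete Cauchy--Schwarz inequality over the faces and cut elements. First I would split $B_h(u,v)$ into (i) the volume term $\sum_{K}\int_{K^0\cup K^1}\beta\lap u\lap v$, (ii) the four consistency terms involving $\jump{u}\cdot\aver{\nabla(\beta\lap v)}$, $\jump{v}\cdot\aver{\nabla(\beta\lap u)}$, $\jump{\nabla u}\aver{\beta\lap v}$ and $\jump{\nabla v}\aver{\beta\lap u}$ on faces $e\in\MEh$ and on interfaces $\Gamma_K$, $K\in\MThG$, and (iii) the penalty terms $\mu_1\jump{u}\cdot\jump{v}$ and $\mu_2\jump{\nabla u}\jump{\nabla v}$. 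The argument is manifestly symmetric in $u$ and $v$, so it suffices to produce the bound for one representative of each pair.

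For the volume term I would use $\beta\le\beta_{\max}$ and Cauchy--Schwarz to get $\sum_K\int_{K^0\cup K^1}\beta\lap u\lap v\le\beta_{\max}\big(\sum_K\|\lap u\|_{L^2(K^0\cup K^1)}^2\big)^{1/2}\big(\sum_K\|\lap v\|_{L^2(K^0\cup K^1)}^2\big)^{1/2}\le\beta_{\max}\DGnorm{u}\DGnorm{v}\le C\DGenorm{u}\DGenorm{v}$. The penalty terms are immediate, since $\mu_1|_e=\eta h_e^{-3}$ and $\mu_2|_e=\eta h_e^{-1}$ (and likewise on $\Gamma_K$) exactly reproduce the weights $h^{-3}$ and $h^{-1}$ in $\DGnorm{\cdot}$; e.g. $\sum_e\int_{e^0\cup e^1}\mu_1\jump{u}\cdot\jump{v}\le\eta\sum_e\|h_e^{-3/2}\jump{u}\|_{L^2(e^0\cup e^1)}\|h_e^{-3/2}\jump{v}\|_{L^2(e^0\cup e^1)}\le\eta\DGnorm{u}\DGnorm{v}$. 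For a consistency term such as $\int_{e^0\cup e^1}\jump{u}\cdot\aver{\nabla(\beta\lap v)}$ on an interior or boundary face I would insert the factor $h_e^{3/2}h_e^{-3/2}$ and apply Cauchy--Schwarz in $L^2(e^0\cup e^1)$: the factor $\|h_e^{-3/2}\jump{u}\|_{L^2(e^0\cup e^1)}$ is part of $\DGnorm{u}\le\DGenorm{u}$, while for $\|h_e^{3/2}\aver{\nabla(\beta\lap v)}\|_{L^2(e^0\cup e^1)}$ I would exploit that $\beta$ is piecewise constant: on each sub-face $e^i\subset\Omega_i$ the two neighbouring element pieces lie in $\Omega_i$, so $\aver{\nabla(\beta\lap v)}=\beta_i\aver{\nabla\lap v}$ there, and the quantity is bounded by $\beta_{\max}\|h_e^{3/2}\aver{\nabla\lap v}\|_{L^2(e^0\cup e^1)}$, which is exactly one of the extra terms in $\DGenorm{v}$. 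The terms with $\aver{\beta\lap v}$ are treated identically with the pairing $h_e^{1/2}h_e^{-1/2}$, matching the weight $h^{-1}$ on $\jump{\nabla u}$ and $h$ on $\aver{\lap u}$.

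The delicate part, and the one I expect to be the main obstacle, is the consistency terms supported on the interfaces $\Gamma_K$ of cut elements, where $\beta$ genuinely takes different values $\beta_0,\beta_1$ on the two sides, so $\aver{\nabla(\beta\lap v)}$ is no longer simply a multiple of $\aver{\nabla\lap v}$. Here I would bound $\|h_K^{3/2}\aver{\nabla(\beta\lap v)}\|_{L^2(\Gamma_K)}\le\tfrac{\beta_{\max}}{2}\big(\|h_K^{3/2}\nabla\lap v\|_{L^2(\partial K^0)}+\|h_K^{3/2}\nabla\lap v\|_{L^2(\partial K^1)}\big)$, using $\Gamma_K\subset\partial K^i$, and then invoke the trace inequality of Lemma \ref{le_traceBK} together with the inverse inequality \eqref{eq_inverse} to reduce each one-sided term to $C\|\lap v\|_{L^2(\circi{K})}$ for the discrete component $v_h\in U_h^m$ (and, for the smooth component in $U_h=U_h^m+H^4(\Omega_0\cup\Omega_1)$, the trace inequalities \eqref{eq_trace} and \eqref{eq_traceGK}); the terms with $\aver{\beta\lap v}$ on $\Gamma_K$ would be handled the same way with the weight $h_K^{1/2}$. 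Summing over $K\in\MThG$ and using that each interior element serves as $M^i(\cdot)=\circi{(\cdot)}$ for only boundedly many cut elements (a consequence of the mesh regularity and Assumption \ref{as_mesh2}), the resulting sum is controlled by $C\DGnorm{v}\le C\DGenorm{v}$. Assembling the face and interface contributions by the discrete Cauchy--Schwarz inequality would then yield $|B_h(u,v)|\le C\DGenorm{u}\DGenorm{v}$, completing the argument.
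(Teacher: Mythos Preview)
Your overall strategy --- a weighted Cauchy--Schwarz with the $h$-scalings matched to the definition of $\DGenorm{\cdot}$ --- is exactly what the paper does; its proof is a single display that writes out the Cauchy--Schwarz bound and stops. The volume, penalty, and face-consistency terms go through just as you describe, and your observation that on $e^i\subset\Omega_i$ the coefficient is the constant $\beta_i$ (so $\aver{\nabla(\beta\lap v)}=\beta_i\aver{\nabla\lap v}$ there) is precisely the reason the face terms are controlled by $\beta_{\max}$ times the corresponding pieces of $\DGenorm{\cdot}$.

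Where your proposal goes beyond the paper is on the $\Gamma_K$ consistency terms, and here there is a genuine gap in your argument. You correctly note that on the interface $\aver{\nabla(\beta\lap v)}=\tfrac12(\beta_0\nabla\lap v|_{K^0}+\beta_1\nabla\lap v|_{K^1})$ is \emph{not} a scalar multiple of $\aver{\nabla\lap v}$, so it is not pointwise dominated by the interface term in $\DGenorm{v}$. The paper's proof simply writes $\|h_K^{3/2}\aver{\nabla(\beta\lap u)}\|_{L^2(\Gamma_K)}$ in its intermediate bound and declares the proof complete, without addressing this. Your proposed repair --- pass to the one-sided traces $\|h_K^{3/2}\nabla\lap v|_{K^i}\|_{L^2(\Gamma_K)}$, absorb them via Lemma~\ref{le_traceBK} for the discrete part $v_h\in U_h^m$, and via \eqref{eq_trace}, \eqref{eq_traceGK} for the $H^4$ part --- does not yield a bound by $\DGenorm{v}$: for the smooth component $w$, the trace inequalities produce $h_K^2\|\nabla\lap w\|_{L^2(K)}^2+h_K^4\|\nabla^2\lap w\|_{L^2(K)}^2$, i.e.\ $H^3$/$H^4$ quantities that are not present in $\DGenorm{\cdot}$; and since the splitting $v=v_h+w$ is not canonical, estimates on the pieces cannot be converted into a bound on $\DGenorm{v}$. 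In short, your plan coincides with the paper's wherever the paper actually argues anything, you rightly flag a subtlety the paper glosses over, but your patch for that subtlety fails for general $v\in U_h$.
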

\begin{proof}
  The continuity can be proved by directly using the Cauchy-Schwarz
  inequality,
  \begin{displaymath}
    \begin{aligned}
      B_h(u,v) \leq C &\left( \sum_{K \in \MTh} \| \beta \lap u 
      \|_{L^2(K^0 \cup K^1)}^2 + \sum_{e \in \MEh} (\| h_e^{-3/2} 
      \jump{u} \|^2_{L^2(e^0 \cup e^1)} + \| h_e^{-1/2} 
      \jump{ \nabla u } \|^2_{L^2(e^0 \cup e^1)} \right. \\
      &+ \| h_e^{3/2} \aver{\nabla(\beta \lap
      u)} \|^2_{L^2(e^0 \cup e^1)} + \| h_e^{1/2} \aver{\beta \lap u}
      \|^2_{L^2(e^0 \cup e^1)}) 
      + \sum_{K \in \MThG} (\| h_K^{-3/2} \jump{u}
      \|^2_{L^2(\Gamma_K)} \\
      & \left. + \| h_K^{-1/2} \jump{ \nabla u }
      \|^2_{L^2(\Gamma_K)} + \| h_K^{3/2} \aver{\nabla(\beta \lap
      u)} \|^2_{L^2(\Gamma_K)} + \| h_K^{1/2} \aver{\beta \lap u}
      \|^2_{L^2(\Gamma_K)}) \right)^{1/2} \cdot \\
      &\left( \sum_{K \in \MTh} \| \beta \lap v 
      \|_{L^2(K^0 \cup K^1)}^2 + \sum_{e \in \MEh} (\| h_e^{-3/2} 
      \jump{v} \|^2_{L^2(e^0 \cup e^1)} + \| h_e^{-1/2} 
      \jump{ \nabla v } \|^2_{L^2(e^0 \cup e^1)} \right. \\
      &+ \| h_e^{3/2} \aver{\nabla(\beta \lap
      v)} \|^2_{L^2(e^0 \cup e^1)} + \| h_e^{1/2} \aver{\beta \lap v}
      \|^2_{L^2(e^0 \cup e^1)}) 
      + \sum_{K \in \MThG} (\| h_K^{-3/2} \jump{v}
      \|^2_{L^2(\Gamma_K)} \\
      & \left. + \| h_K^{-1/2} \jump{ \nabla v }
      \|^2_{L^2(\Gamma_K)} + \| h_K^{3/2} \aver{\nabla(\beta \lap
      v)} \|^2_{L^2(\Gamma_K)} + \| h_K^{1/2} \aver{\beta \lap v}
      \|^2_{L^2(\Gamma_K)}) \right)^{1/2},
    \end{aligned}
  \end{displaymath}
  which completes the proof.
\end{proof}
Now we verify the Galerkin orthogonality to the bilinear form
$B_h(\cdot,\cdot)$.
\begin{lemma}
  Suppose $u \in H^4(\Omega_0 \cup \Omega_1)$ is the exact solution to
  the problem \eqref{eq_interface}, and $u_h \in U_h^m$ is the
  numerical solution to the discrete problem \eqref{eq_variational},
  then 
  \begin{equation}
    B_h(u - u_h, v_h) = 0, \quad \forall v_h \in U_h^m.
    \label{eq_orthogonality}
  \end{equation}
  \label{le_orthogonality}
\end{lemma}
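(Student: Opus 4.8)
The plan is to show that the exact solution $u \in H^4(\Omega_0 \cup \Omega_1)$ satisfies the same variational identity $B_h(u,v_h) = l_h(v_h)$ that defines $u_h$; subtracting \eqref{eq_variational} then yields \eqref{eq_orthogonality} immediately. So the entire task reduces to the \emph{consistency} of the discrete scheme. First I would take an arbitrary $v_h \in U_h^m$ and start from $\sum_{K \in \MTh} \int_{K^0 \cup K^1} \beta \lap u \lap v_h \,\d{\bm{x}}$. Since $u|_{K^i} \in H^4(K^i)$ and $v_h$ is polynomial on each $K^i$, I can integrate by parts twice on every piece $K^i$. The first integration by parts moves a gradient onto $\beta \lap u$, producing interior terms $-\int_{\partial K^i} \nabla(\beta \lap u)\cdot \un\, v_h$; the second moves the remaining derivative, producing $\int_{K^i} \lap(\beta \lap u) v_h$ together with $\int_{\partial K^i} \beta \lap u\, (\nabla v_h \cdot \un)$. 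The volume term becomes $\int_{K^i} f v_h$ by the PDE $\lap(\beta \lap u) = f$, which matches the first term of $l_h$.

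Next I would reorganize the boundary contributions. Summing the face integrals over all $K$, each interior face $e \in \MEh^\circ$ is visited twice with opposite normals, so the boundary sums regroup into jump/average pairings over $e^0 \cup e^1$; the cut-element contributions regroup into integrals over $\Gamma_K$. Here I use the standard DG identity $\sum_K \int_{\partial K} w\cdot \un \,\phi = \sum_e \int_e (\jump{\phi}\cdot\aver{w} + \jump{w}\aver{\phi})$ (with the convention that on $\MEh^b$ and on $\Gamma_K$ only one side is present, matching \eqref{eq_traceopeb} and \eqref{eq_averageG}). Because $u$ is the exact solution, $\jump{u} = a_1 \un_0$, $\jump{\nabla u} = a_2$ on $\Gamma$, and $\jump{u}=0,\ \jump{\nabla u}=0$ on interior faces $e \in \MEh^\circ$; on boundary faces $\jump{u} = g_1\un$ and $\jump{\nabla u} = g_2$ via the Dirichlet data $u = g_1$, $\parn{u}=g_2$. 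Likewise $\beta\lap u$ and $\nabla(\beta\lap u)$ have single-valued traces across interior faces, so their averages equal the trace from either side, while across $\Gamma$ the jumps $\jump{\beta\lap u} = a_3 \un_0$ and $\jump{\nabla(\beta\lap u)} = a_4$ appear. Substituting these into the regrouped expression, the terms carrying $\aver{\nabla(\beta\lap u)}$, $\aver{\beta\lap u}$ weighted by $\jump{v_h}$, $\jump{\nabla v_h}$ on $\Gamma_K$ and on $\MEh^b$ reproduce exactly the $a_1,a_2,g_1,g_2$ integrals in $l_h$; the remaining symmetric terms $\jump{u}\cdot\aver{\nabla(\beta\lap v_h)}$, $\jump{\nabla u}\aver{\beta\lap v_h}$ and the penalty terms $\mu_1\jump{u}\cdot\jump{v_h}$, $\mu_2\jump{\nabla u}\jump{\nabla v_h}$ vanish on interior faces (since $\jump{u}=\jump{\nabla u}=0$ there) and on $\Gamma_K$ and $\MEh^b$ produce precisely the $a_3,a_4$ and $g_1,g_2$ penalty/symmetrization contributions in $l_h$. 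Matching everything term by term gives $B_h(u,v_h) = l_h(v_h)$.

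The main obstacle is purely bookkeeping: carefully tracking signs, normals, and which interface/boundary jump condition replaces which trace, so that every boundary term generated by the double integration by parts is accounted for by a term in $l_h$ with the correct coefficient. One subtlety worth flagging explicitly is that the integration by parts is performed on the \emph{subpieces} $K^i = K \cap \Omega_i$, whose boundary is $(\partial K)^i \cup \Gamma_K$ for cut elements (cf.\ the definition of $\partial K^i$), so the $\Gamma_K$ integrals in $B_h$ and $l_h$ arise naturally as the ``new'' boundary created by cutting; the regularity $u \in H^4(\Omega_0 \cup \Omega_1)$ together with Assumption \ref{as_mesh1} ($e \cap \Gamma$ simply connected) guarantees all these trace integrals are well defined. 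Since $v_h$ is piecewise polynomial, no integration by parts is needed on the $v_h$ side, so no extra terms appear there. This completes the argument.
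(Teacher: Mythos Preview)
Your proposal is correct and follows essentially the same route as the paper: establish consistency $B_h(u,v_h)=l_h(v_h)$ by integrating $\int_{K^i}\lap(\beta\lap u)\,v_h$ by parts twice on each subpiece $K^i$, regroup the resulting $\partial K^i$ contributions into jump/average pairs over $e^0\cup e^1$ and $\Gamma_K$, and then substitute the interface and boundary data for the jumps of $u$.

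One small bookkeeping slip to fix when you write it out: you attributed the $a_1,a_2$ contributions in $l_h$ to the terms $\aver{\nabla(\beta\lap u)}\cdot\jump{v_h}$ and $\aver{\beta\lap u}\,\jump{\nabla v_h}$. Those are the \emph{consistency} terms already present in $B_h$; they are exactly what the integration by parts produces (up to the extra $\jump{\beta\lap u}\cdot\aver{\nabla v_h}$ and $\jump{\nabla(\beta\lap u)}\aver{v_h}$ pieces on $\Gamma_K$, which yield the $a_3,a_4$ integrals in $l_h$). The $a_1,a_2,g_1,g_2$ contributions in $l_h$ come instead from the \emph{symmetrization} terms $\jump{u}\cdot\aver{\nabla(\beta\lap v_h)}$, $\jump{\nabla u}\,\aver{\beta\lap v_h}$ and the penalty terms in $B_h$, after inserting $\jump{u}=a_1\un_0$, $\jump{\nabla u}=a_2$ on $\Gamma$ and $u=g_1$, $\parn{u}=g_2$ on $\partial\Omega$. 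This is precisely the ``tracking signs and which jump condition replaces which trace'' obstacle you flagged yourself; once that table is straightened out, the argument goes through exactly as you describe.
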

\begin{proof}
  Since $u \in H^4(\Omega_0 \cup \Omega_1)$, we have that
  \begin{displaymath}
    \jump{u}|_{e^i} = \bm{0}, \quad \jump{\nabla u}|_{e^i} = 0, \quad
    \jump{\beta \lap u}|_{e^i} = \bm{0}, \quad \jump{\nabla(\beta \lap
    u)}|_{e^i} = 0, \quad \forall e \in \MEh^\circ, \,\, i = 0,1.
  \end{displaymath}
Taking the exact solution into $B_h(\cdot, \cdot)$, we have that
\begin{displaymath}
  \begin{aligned}
    B_h(u, v_h) &= \sum_{K \in \MTh} \int_{K^0 \cup K^1} \beta \lap
    u \lap v_h \d{\bm{x}} + \sum_{e \in \MEh} \int_{e^0 \cup e^1}  
    \jump{v_h} \cdot \aver{\nabla(\beta \lap u)} \d{\bm{s}}
    \\
    &-\sum_{e \in \MEh} \int_{e^0 \cup e^1} \jump{\nabla v_h} 
    \aver{\beta \lap u}  \d{\bm{s}} + \sum_{K \in \MThG} 
    \int_{\Gamma_K} \jump{v_h} \cdot \aver{\nabla (\beta \lap u)} 
    \d{\bm{s}} \\
    &-\sum_{K \in \MThG} \int_{\Gamma_K} \jump{\nabla v_h} 
    \aver{\beta \lap u} \d{\bm{s}} + \sum_{K \in \MThG} \int_{\Gamma_K} 
    \left( a_1 \aver{\un_0 \cdot \nabla(\beta \lap v_h)} - a_2
    \aver{\beta \lap v_h} \right) \d{\bm{s}} \\
    &+ \sum_{e \in \MEh^b} \int_{e^0 \cup e^1} \left( g_1 (\jump{\nabla
    (\beta \lap v_h)} + \mu_1 \un \cdot \jump{v_h}) + g_2 (\un \cdot 
    \jump{-\beta \lap v_h} + \mu_2 \jump{\nabla v_h}) \right) \d{\bm{s}}\\
  \end{aligned}
\end{displaymath}
We multiply the test function $v_h$ at both side of equation
\eqref{eq_interface}, and apply the integration by parts to get that
\begin{displaymath}
  \begin{aligned}
    \sum_{K \in \MTh} \int_{K^0 \cup K^1} &f v_h \d{\bm{x}} = 
    \sum_{K \in \MTh} \int_{K^0 \cup K^1} \lap(\beta
    \lap u) v_h \d{\bm{x}} = \sum_{K \in \MTh} \int_{K^0 \cup K^1}
    \beta \lap u \lap v_h \d{\bm{x}} \\
    &+ \sum_{K \in \MTh} \sum_{i=0,1} \left( \int_{\partial K^i} \un
    \cdot \nabla (\beta \lap u) v_h \d{\bm{s}} - \int_{\partial K^i}
    \beta \lap u \un \cdot \nabla v_h \d{\bm{s}} \right) \\
    &=\sum_{K \in \MTh} \int_{K^0 \cup K^1} \beta \lap u \lap v_h 
    \d{\bm{x}} + \sum_{e \in \MEh} \int_{e^0 \cup e^1}  
    \jump{v_h} \cdot \aver{\nabla(\beta \lap u)} \d{\bm{s}}
    \\
    &-\sum_{e \in \MEh} \int_{e^0 \cup e^1} \jump{\nabla v_h} 
    \aver{\beta \lap u}  \d{\bm{s}} + \sum_{K \in \MThG} 
    \int_{\Gamma_K} \jump{v_h} \cdot \aver{\nabla (\beta \lap u)} 
    \d{\bm{s}} \\
    &-\sum_{K \in \MThG} \int_{\Gamma_K} \jump{\nabla v_h} 
    \aver{\beta \lap u} \d{\bm{s}} + \sum_{K \in \MThG} \int_{\Gamma_K} 
    a_4 \aver{v_h} \d{\bm{s}}
    -\sum_{K \in \MThG} \int_{\Gamma_K} a_3 \aver{\un_0 \cdot \nabla
    v_h} \d{\bm{s}}.
  \end{aligned}
\end{displaymath}
Thus, by simply calculating, we obtain that
\begin{displaymath}
  B_h(u_h, v_h) = l_h(v_h) = B_h(u, v_h),
\end{displaymath}
which completes the proof.
\end{proof}
Then we establish the interpolation error estimate to $\mc{R}$ under
the norm $\DGenorm{\cdot}$.
\begin{lemma}
  For $0 \leq h \leq h_0$ and $m \geq 2$, there exists a constant $C$ 
  such that
  \begin{equation}
    \DGenorm{v - \mc{R} v} \leq C \Lambda_m h^{m-1} \| v
    \|_{H^{m+1}(\Omega_0 \cup \Omega_1)}, \quad \forall v \in
    H^{s}(\Omega_0 \cup \Omega_1), \quad s = \max(4, m+1).
    \label{eq_interpolation_err}
  \end{equation}
  \label{le_interpolation_err}
\end{lemma}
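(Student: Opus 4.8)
The plan is to set $w := v - \mc{R}v$ and to bound separately each of the five groups of terms that make up $\DGenorm{w}^2$: the broken Laplacian term $\sum_{K\in\MTh}\|\lap w\|_{L^2(K^0\cup K^1)}^2$, the scaled jumps $h_e^{-3}\|\jump{w}\|^2_{L^2(e^0\cup e^1)}$ and $h_e^{-1}\|\jump{\nabla w}\|^2_{L^2(e^0\cup e^1)}$ over the faces $\MEh$, their $h_K^{-3}$, $h_K^{-1}$ analogues over the interface pieces $\Gamma_K$ ($K\in\MThG$), the scaled averages $h_e^3\|\aver{\nabla\lap w}\|^2$, $h_e\|\aver{\lap w}\|^2$ over the faces, and the corresponding $\Gamma_K$ averages. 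The common device for every face contribution is to reduce it, by a trace inequality, to volume norms of $w$ and its derivatives on a single mesh cell, and then to invoke the local reconstruction estimate \eqref{eq_localapproximation2}, using that on $K\in\MTh^i$ one has $w|_{K^i}=E^i v-\mc{R}^i(E^i v)$. Since $\|E^i v\|_{H^{m+1}(\Omega)}\le C\|v\|_{H^{m+1}(\Omega_i)}$ and the patches $\{S^i(K)\}$ have bounded overlap (mesh regularity together with Assumption~\ref{as_patch}), any bound of the form $\sum_K\Lambda_m^2 h_K^{2(m-1)}\|E^i v\|_{H^{m+1}(S^i(K))}^2$ will collapse to $C\Lambda_m^2 h^{2(m-1)}\|v\|_{H^{m+1}(\Omega_0\cup\Omega_1)}^2$, which is the target.

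The jump terms are the most transparent. Since $s=\max(4,m+1)>d/2+1$ for $d=2,3$, each $E^i v$ lies in $C^1(\overline{\Omega})$; hence $\jump{v}$ and $\jump{\nabla v}$ vanish on every $e^i$ with $e\in\MEh$, while on $\Gamma_K$ the jumps of $v$ and of $\mc{R}v$ both see $E^0 v$ from the $\Omega_0$ side and $E^1 v$ from the $\Omega_1$ side, so $\jump{w}|_{\Gamma_K}=\sum_{i=0,1}(E^i v-\mc{R}^i(E^i v))|_{K^i}\,\un_i$ and likewise for $\jump{\nabla w}$. Consequently $|\jump{w}|$ and $|\jump{\nabla w}|$ are controlled pointwise, on each face or interface piece, by the traces on the adjacent cells of $E^i v-\mc{R}^i(E^i v)$ and of its gradient. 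Applying \eqref{eq_trace} on $\partial K$ (restricted to $(\partial K)^i$ when $K$ is cut) for the faces in $\MEh$ and Lemma~\ref{le_traceGK} for the pieces $\Gamma_K$, and then inserting \eqref{eq_localapproximation2} with $q=0,1,2$, one obtains, e.g., $h_e^{-3}\|\jump{w}\|_{L^2(e^i)}^2\le C h_e^{-3}\sum_{\pm}\big(h_K^{-1}\|w\|_{L^2(K^{\pm})}^2+h_K\|\nabla w\|_{L^2(K^{\pm})}^2\big)\le C\Lambda_m^2 h_K^{2(m-1)}\|E^i v\|_{H^{m+1}(S^i(K))}^2$; the gradient-jump terms and the $\Gamma_K$ terms are identical after shifting the derivative count by one. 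The broken Laplacian term follows at once from \eqref{eq_localapproximation2} with $q=2$, which is where $m\ge2$ enters.

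For the average terms I again reduce, via \eqref{eq_trace}, \eqref{eq_traceBK} and Lemma~\ref{le_traceGK}, to one-cell traces of $\lap w$ and $\nabla\lap w$, with a cut cell $K$ replaced by the appropriate interior neighbour $\circi{K}$ exactly as in the proof of Lemma~\ref{le_normEq}. For $m\ge4$ all of $\|\lap w\|$, $\|\nabla\lap w\|$, $\|\nabla^2\lap w\|$ on $K$ are bounded by \eqref{eq_localapproximation2} with $q\le m$, and after multiplication by the weights $h_e$ or $h_e^3$ the scalings again combine to $h_K^{2(m-1)}$. For $m=2,3$ the top-order part of $\mc{R}v$ vanishes ($\lap(\mc{R}v)$ is a polynomial of degree $m-2\le 1$), so the relevant top-order traces of $w$ coincide with those of $v$ and are controlled directly by the regularity $v\in H^s$, $s=\max(4,m+1)$ (for $m=3$ this is $H^{m+1}$, for $m=2$ it is $H^4$); this is the only place the $H^4$ hypothesis in the statement is genuinely used. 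Summing the five groups over $K\in\MTh$ and $e\in\MEh$ with the bounded-overlap argument then yields \eqref{eq_interpolation_err}.

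The main obstacle is not any single estimate but the bookkeeping: each face or interface integral must be reduced to a volume integral over a single cell \emph{before} the approximation bound \eqref{eq_localapproximation2} is applied, and the constants in that reduction must be independent of how $\Gamma$ cuts the mesh. This forces the correct element at the trace step — $\circi{K}$ rather than $K$ in \eqref{eq_traceBK}, and the $M^i(K)$-side cell for the contributions on $\Gamma_K$ — which is precisely where Assumption~\ref{as_mesh2} and the construction $S^i(K)=S^i(M^i(K))$ are needed. A secondary point to watch is the $m=2$ (and partly $m=3$) case of the average terms, where the extra $H^4$ regularity carried by $s=\max(4,m+1)$ must be invoked because $\mc{R}v$ no longer has enough smoothness to absorb $\nabla\lap w$.
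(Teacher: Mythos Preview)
Your approach is essentially the paper's: reduce each face and interface contribution to a volume norm on an adjacent cell via a trace inequality, then invoke the local approximation estimate \eqref{eq_localapproximation2} for $E^i v-\mc{R}^i(E^i v)$ and sum using bounded patch overlap. Your treatment of the $m=2,3$ average terms (splitting off the vanishing polynomial part and calling on the $H^4$ regularity) is in fact more explicit than the paper, which dismisses these terms with ``similarly''.

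One correction: for the average terms you should \emph{not} route through \eqref{eq_traceBK} and the interior neighbour $\circi{K}$ ``as in Lemma~\ref{le_normEq}''. That estimate is stated only for $v_h\in U_h^m$ (it relies on the polynomial inverse inequality), and $w=v-\mc{R}v$ is not in $U_h^m$. The right device---and the one the paper actually uses---is to observe that on each $K\in\MTh^i$ the function $E^i v-\mc{R}^i(E^i v)$ is defined on the \emph{whole} element $K$ (not just on $K^i$), so the ordinary trace inequalities \eqref{eq_trace} on $\partial K$ and \eqref{eq_traceGK} on $\Gamma_K$ apply directly, after which \eqref{eq_localapproximation2} on $K$ finishes the job. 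No passage to $\circi{K}$ is needed here; that trick is reserved for bounding discrete functions where the cut piece $K^i$ may be arbitrarily small.
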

\begin{proof}
  From the definition of the extension operator $E^i$ and Theorem
  \ref{th_localapproximation}, we can show that
  \begin{displaymath}
    \begin{aligned}
      \sum_{K \in \MTh^i} \| \lap v - \lap(\mc{R} v) \|^2_{L^2(K^i)} 
      &\leq \sum_{K \in \MTh^i} \| \lap(E^i v) - \lap(\mc{R}(E^i v) 
      \|^2_{L^2(K)} \\
      &\leq \sum_{K \in \MTh^i} C \Lambda_m^2 h_K^{2m-2} \| E^i v
      \|^2_{H^{m+1}(S^i(K))} \\
      &\leq C \Lambda_m^2 h^{2m-2} \| E^i v \|^2_{H^{m+1}(\Omega)} \leq
      C \Lambda_m^2 h^{2m-2} \| v \|^2_{H^{m+1}(\Omega_i)}, \\
    \end{aligned}
  \end{displaymath}
  for $i = 0, 1$. By the trace estimate \eqref{eq_traceGK}, 
  \begin{displaymath}
    \begin{aligned}
      \sum_{K \in \MThG}  h_K^{-1} \| \jump{\nabla ( v - \mc{R} v)} 
      \|^2_{L^2(\Gamma_K)} &\leq C \sum_{K \in \MThG} \sum_{i=0,1}
      \left( h_K^{-2} \| \nabla(E^i v - \mc{R} (E^i v)) \|^2_{L^2(K)}
      + \| \nabla (E^i v - \mc{R}(E^i v)) \|^2_{H^1(K)} \right) \\
      &\leq C \Lambda_m^2 h^{2m-2} \sum_{i=0,1} \| E^i v
      \|^2_{H^{m+1}(\Omega)} \leq C \Lambda_m^2 h^{2m-2} \| v
      \|^2_{H^{m+1}(\Omega_0 \cup \Omega_1)}.
    \end{aligned}
  \end{displaymath}
  The other trace terms in $\DGenorm{v - \mc{R} v}$ can be bounded by
  the trace estimates \eqref{eq_traceGK} and \eqref{eq_trace}
  similarly, which completes the proof.
\end{proof}
Now we are ready to present the \emph{a priori} error estimate under
$\DGenorm{\cdot}$ within the standard Lax-Milgram framework.
\begin{theorem}
  Suppose the biharmonic interface problem \eqref{eq_interface} has a
  solution $u \in H^{s}(\Omega_0 \cup \Omega_1)$, where $s = \max(4,
  m+1)$, $m \geq 2$ and $\Lambda_m$ has a uniform upper bound 
  independent of $h$.
  Let the bilinear form $B_h(\cdot, \cdot)$ be defined with a
  sufficiently large $\eta$ and $u_h in U_h^m$ be the numerical 
  solution to the problem \eqref{eq_variational}. Then for $h
  \leq h_0$ there exists a constant $C$ such that
  \begin{equation}
    \DGenorm{u - u_h} \leq C h^{m-1} \| u \|_{H^{m+1}(\Omega_0
    \cup \Omega_1)}.
    \label{eq_errorestimate}
  \end{equation}
  \label{th_errorestimate}
\end{theorem}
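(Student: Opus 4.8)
The plan is to obtain \eqref{eq_errorestimate} by the standard C\'ea-type argument built on the three ingredients already established: the coercivity of $B_h$ over $\DGenorm{\cdot}$ (Theorem \ref{th_coercivity}), the continuity of $B_h$ over $\DGenorm{\cdot}$ (Theorem \ref{th_continuity}), the Galerkin orthogonality (Lemma \ref{le_orthogonality}), and the interpolation estimate for $\mc{R}$ (Lemma \ref{le_interpolation_err}). Concretely, write $u - u_h = (u - \mc{R} u) + (\mc{R} u - u_h)$ and set $w_h := \mc{R} u - u_h \in U_h^m$. It suffices to bound $\DGenorm{w_h}$, since $\DGenorm{u - \mc{R}u}$ is already controlled by Lemma \ref{le_interpolation_err}.

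First I would apply coercivity: there is a constant $C_0 > 0$ with $C_0 \DGenorm{w_h}^2 \leq B_h(w_h, w_h)$. Then I would split $B_h(w_h, w_h) = B_h(\mc{R}u - u, w_h) + B_h(u - u_h, w_h)$, and observe that the second term vanishes by Lemma \ref{le_orthogonality} because $w_h \in U_h^m$. For the first term I would invoke continuity (Theorem \ref{th_continuity}), which applies since both $\mc{R}u - u$ and $w_h$ lie in $U_h$: $|B_h(\mc{R}u - u, w_h)| \leq C \DGenorm{\mc{R}u - u}\,\DGenorm{w_h}$. Dividing through by $\DGenorm{w_h}$ gives $\DGenorm{w_h} \leq (C/C_0)\,\DGenorm{\mc{R}u - u}$. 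Finally, combining with the triangle inequality and Lemma \ref{le_interpolation_err},
\begin{displaymath}
  \DGenorm{u - u_h} \leq \DGenorm{u - \mc{R}u} + \DGenorm{w_h}
  \leq \left(1 + \frac{C}{C_0}\right)\DGenorm{u - \mc{R}u}
  \leq C \Lambda_m h^{m-1}\|u\|_{H^{m+1}(\Omega_0 \cup \Omega_1)},
\end{displaymath}
and since $\Lambda_m$ is assumed uniformly bounded in $h$, this is exactly \eqref{eq_errorestimate}.

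There is essentially no hard analytic obstacle left here — all the substantive work (the trace inequalities near the interface, the norm equivalence, the stability of the reconstruction, coercivity with the interface penalties, and the Galerkin orthogonality which absorbs the interface jump data into $l_h$) has already been done in the preceding lemmas and theorems. The only points requiring care are bookkeeping ones: checking that $h \leq h_0$ is imposed so that the interface trace inequality \eqref{eq_traceGK} and hence coercivity and the interpolation estimate are all valid simultaneously; confirming that $\mc{R}u$ is well defined and lies in $U_h^m$ (which uses $u \in H^s(\Omega_0 \cup \Omega_1)$ with $s = \max(4, m+1)$ so the extension operators $E^i$ and the reconstruction apply); and making sure the requirement $m \geq 2$ is carried through, as it is needed both for the discrete problem \eqref{eq_variational} to make sense and for the exponent $m - 1 \geq 1$ in Lemma \ref{le_interpolation_err}. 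I would state these prerequisites explicitly at the start of the proof and then present the four-line estimate above.
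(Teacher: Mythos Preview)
Your proposal is correct and follows essentially the same route as the paper: coercivity on $U_h^m$, Galerkin orthogonality, continuity on $U_h$, triangle inequality, and then the interpolation estimate for $\mc{R}u$. The only cosmetic difference is that the paper first derives the quasi-best-approximation bound $\DGenorm{u-u_h}\leq C\inf_{v_h\in U_h^m}\DGenorm{u-v_h}$ for a generic $v_h$ and then specializes to $v_h=\mc{R}u$, whereas you fix $w_h=\mc{R}u-u_h$ from the outset; the estimates are identical.
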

\begin{proof}
  From \eqref{eq_coercivity}, \eqref{eq_continuity} and
  \eqref{eq_orthogonality}, we have that for any $v_h \in U_h^m$,
  \begin{displaymath}
    \begin{aligned}
      \DGenorm{u_h - v_h}^2 &\leq C B_h(u_h - v_h, u_h - v_h) = C
      B_h(u - u_h, u_h - v_h) \\
      &\leq C \DGenorm{u - v_h} \DGenorm{u_h - v_h}.
    \end{aligned}
  \end{displaymath}
  By the triangle inequality, there holds
  \begin{displaymath}
    \DGenorm{u - u_h} \leq \DGenorm{u - v_h} + \DGenorm{v_h - u_h}
    \leq C \inf_{v_h \in U_h^m} \DGenorm{u - v_h}.
  \end{displaymath}
  Let $v_h = \mc{R} u$, by the inequality
  \eqref{eq_interpolation_err}, we arrive at 
  \begin{displaymath}
    \DGenorm{u - u_h} \leq C \DGenorm{u - \mc{R} u} \leq C h^{m-1}
    \|u\|_{H^{m+1}(\Omega_0 \cup \Omega_1)},
  \end{displaymath}
  which completes the proof.
\end{proof}
Ultimately, we prove the $L^2$ error estimate by the duality argument.
Let $\phi \in H^4(\Omega_0 \cup \Omega_1)$ be the solution of the problem
\begin{equation}
  \left\{
  \begin{aligned}
    &\lap (\beta(x) \lap \phi) = u - u_h, \quad \text{ in } \Omega_0
    \cup \Omega_1, \\
    &\phi = \parn{\phi} = 0, \quad \text{ on } \partial \Omega, \\
    & \jump{\phi} = \jump{\beta \lap \phi} = \bm{0}, \quad \text{ on }
    \Gamma, \\
    & \jump{\nabla \phi} = \jump{\nabla(\beta \lap \phi)} = 0, \quad 
    \text{ on } \Gamma,
  \end{aligned}
  \right.
  \label{eq_dualinterface}
\end{equation}
and satisfies 
\begin{equation}
  \| \phi \|_{H^4(\Omega_0 \cup \Omega_1)} \leq C \| u - u_h
  \|_{L^2(\Omega)}.
  \label{eq_dualregularity}
\end{equation}
We refer to \cite{mozolevski2003priori} for the details of the
regularity assumption. 

From the Galerkin orthogonality \eqref{eq_orthogonality} and the
Theorem \ref{th_errorestimate}, we deduce that
\begin{displaymath}
  \begin{aligned}
    \| u - u_h \|^2_{L^2(\Omega)} &= B_h( \phi, u-u_h) = B_h(\phi -
    \mc{R} \phi, u- u_h) \\
    &\leq C \DGenorm{\phi - \mc{R} \phi} \DGenorm{u - u_h} \\
    &\leq 
    \begin{cases}
      &C h^2 \| u \|_{H^3(\Omega_0 \cup \Omega_1)}, \quad m =2, \\
      &C h^{m+1} \| u \|_{H^{m+1}(\Omega_0 \cup \Omega_1)}, \quad m
      \geq 3.
    \end{cases}
  \end{aligned}
\end{displaymath}
We summarize what we have proved as the following theorem
\begin{theorem}
  Suppose the conditions in Theorem \ref{th_errorestimate} and the
  assumption \eqref{eq_dualregularity} hold true, then we have
  \begin{equation}
    \begin{aligned}
      \| u - u_h \|_{L^2(\Omega)} &\leq C h^2 \| u \|_{H^3(\Omega_0
      \cup \Omega_1)}, \quad m = 2, \\
      \| u - u_h \|_{L^2(\Omega)} &\leq C h^{m+1} \| u \|_{H^{m+1}
      (\Omega_0 \cup \Omega_1)}, \quad m \geq 3, 
    \end{aligned}
    \label{eq_L2errorestimate}
  \end{equation}
  \label{th_L2errorestimate}
\end{theorem}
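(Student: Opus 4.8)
The plan is to run a standard Aubin--Nitsche duality argument, exactly along the lines sketched in the text preceding the statement. First I would introduce the auxiliary problem \eqref{eq_dualinterface} with right-hand side $u-u_h$; the regularity hypothesis \eqref{eq_dualregularity} provides a solution $\phi\in H^4(\Omega_0\cup\Omega_1)$ with $\|\phi\|_{H^4(\Omega_0\cup\Omega_1)}\le C\|u-u_h\|_{L^2(\Omega)}$. Testing the dual equation against $u-u_h\in U_h$ and integrating by parts elementwise exactly as in the proof of Lemma \ref{le_orthogonality}---using that $\phi$ has homogeneous boundary and interface data, so all the $a_i$ and $g_i$ contributions drop out---gives the identity $\|u-u_h\|_{L^2(\Omega)}^2=B_h(\phi,u-u_h)$. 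Since $B_h$ is symmetric and $\mc{R}\phi\in U_h^m$, the Galerkin orthogonality \eqref{eq_orthogonality} then upgrades this to $\|u-u_h\|_{L^2(\Omega)}^2=B_h(\phi-\mc{R}\phi,\,u-u_h)$.

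Next I would apply the continuity estimate \eqref{eq_continuity} to obtain
\[
  \|u-u_h\|_{L^2(\Omega)}^2\le C\,\DGenorm{\phi-\mc{R}\phi}\,\DGenorm{u-u_h},
\]
and bound the second factor by $Ch^{m-1}\|u\|_{H^{m+1}(\Omega_0\cup\Omega_1)}$ via Theorem \ref{th_errorestimate}. The remaining task is to estimate $\DGenorm{\phi-\mc{R}\phi}$, and this is where the two regimes separate. For $m=2$ the reconstruction $\mc{R}\phi$ is only piecewise quadratic, so the approximation order is capped by the polynomial degree, and Lemma \ref{le_interpolation_err} applies verbatim (it already requires $\phi\in H^{\max(4,m+1)}=H^4$) to give $\DGenorm{\phi-\mc{R}\phi}\le Ch\,\|\phi\|_{H^3(\Omega_0\cup\Omega_1)}$. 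For $m\ge 3$ the cap is instead the $H^4$-regularity of $\phi$, so Lemma \ref{le_interpolation_err} is not directly available; I would rerun its proof with the target regularity $H^{m+1}$ replaced by $H^4$, invoking the Dupont--Scott local polynomial approximation cited after Lemma \ref{le_approximation} (legitimate for any Sobolev order $\le m+1$, which covers $H^4$ since $m\ge 3$) together with the trace inequalities \eqref{eq_trace} and \eqref{eq_traceGK}, to get $\DGenorm{\phi-\mc{R}\phi}\le Ch^{2}\|\phi\|_{H^4(\Omega_0\cup\Omega_1)}$.

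Finally, inserting the appropriate bound on $\DGenorm{\phi-\mc{R}\phi}$, using \eqref{eq_dualregularity} to replace $\|\phi\|_{H^4(\Omega_0\cup\Omega_1)}$ (hence also $\|\phi\|_{H^3}$) by $\|u-u_h\|_{L^2(\Omega)}$, and cancelling one factor of $\|u-u_h\|_{L^2(\Omega)}$ yields \eqref{eq_L2errorestimate}: for $m=2$ the product is $h\cdot h^{m-1}=h^2$ with the $H^3$-norm of $u$, and for $m\ge 3$ it is $h^2\cdot h^{m-1}=h^{m+1}$ with the $H^{m+1}$-norm. The main obstacle I anticipate is the reduced-regularity interpolation estimate for $\phi$: one must verify that every term of $\DGenorm{\phi-\mc{R}\phi}$---in particular the third-order trace terms $h^3|\aver{\nabla\lap(\cdot)}|^2$, which for $m=2$ do not involve $\mc{R}\phi$ at all---is controlled at the claimed order by $\|\phi\|_{H^4(\Omega_0\cup\Omega_1)}$, which is precisely why one must exploit the full $H^4$ bound on the dual solution rather than the nominal $H^{m+1}$ bound, and why the case $m=2$ ends up with the (necessarily) reduced rate $h^2$.
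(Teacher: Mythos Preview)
Your proposal is correct and follows exactly the duality argument the paper uses: establish $\|u-u_h\|_{L^2}^2=B_h(\phi,u-u_h)=B_h(\phi-\mc{R}\phi,u-u_h)$ via the dual problem and Galerkin orthogonality, apply continuity \eqref{eq_continuity}, then combine Theorem~\ref{th_errorestimate} with an interpolation bound on $\phi-\mc{R}\phi$ and the regularity \eqref{eq_dualregularity}. Your discussion of the reduced-regularity interpolation estimate for $\phi$ (capped at $H^4$ when $m\ge 3$, and the vanishing of $\nabla\lap\mc{R}\phi$ when $m=2$) is more explicit than what the paper records, but the underlying argument is identical.
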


\section{Numerical Results}
\label{sec_numericalresults}
In this section, we conduct a series numerical experiments to test the
performance of our method. For the accuracy $2 \leq m \leq 6$, the
penalty parameter and the threshold $\# S$ we used in Example 1, 2, 4
and 5 are listed in Tab.~\ref{tab_pen_and_patch}. For all examples, 
the jump conditions
$a_1, a_2, a_3, a_4$, the boundary $g_1, g_2$ and the right hand side
$f$ in the equation \eqref{eq_interface} are chosen according to the
exact solution. The integrals on the interface and the curved domains
like
\begin{displaymath}
  \int_{K^0} h(\bm{x}) \d{\bm{x}}, \quad \int_{K^1} h_(\bm{x})
  \d{\bm{x}}, \quad \int_{\Gamma_K} h(\bm{x}) \d{\bm{s}}, \quad
  \forall K \in \MThG,
\end{displaymath}
in the numerical scheme are implemented by the PHG package
\cite{Zhang2009parallel}. 

\noindent \textbf{Example 1.} 
We first consider a problem defined on the squared domain $\Omega =
(-1,1)^2$ with a circular interface inside it. The interface is
described by the level function
\begin{equation}
  \phi(x,y) = x^2 + y^2 - r^2, \quad r = 0.5,
  \label{eq_ex1interface}
\end{equation}
see Fig.~\ref{fig_ex1_interface}. The exact solution is chosen by
\begin{displaymath}
  u(x,y) = \left\{
  \begin{aligned}
    &e^{x^2 + y^2}, \quad \text{ in } \Omega_0,\\
    &0.1(x^2 + y^2)^2 -0.005\mathop{ln}(x^2 + y^2), \quad \text{ in } \Omega_1,
  \end{aligned}
  \right.
\end{displaymath}
with the coefficient to be
\begin{displaymath}
  \beta = \left\{
  \begin{aligned}
    &1, \quad \text{ in } \Omega_0, \\
    &10, \quad \text{ in } \Omega_1.
  \end{aligned}
  \right.
\end{displaymath}
We solve the interface problem on a sequence of meshes with the size
$h = 1/10, 1/20, 1/40, 1/80$. The convergence histories under 
the $\DGenorm{\cdot}$ and $\| \cdot \|_{L^2(\Omega)}$ are shown in
Fig.~\ref{fig_ex1err}. The error under the energy norm is decreasing 
at the speed $O(h^{m-1})$ for fixed $m$. For $L^2$ error, the speed is
$O(h^2)$ when $m = 2$ and $O(h^{m+1})$ when $m \geq 3$. This results
are coincide with the theoretical analysis in Theorem
\ref{th_errorestimate} and Theorem \ref{th_L2errorestimate}.

\begin{table}[htp]
\begin{minipage}[t]{0.3\textwidth}
  \centering
  \begin{tabular}{p{0.6cm}|p{0.6cm}|p{0.6cm}|p{0.6cm}|p{0.6cm}|p{0.6cm}}
  \hline\hline
  $m$    & 2 & 3 & 4 & 5 & 6 \\ \hline
  $\eta$ & 20 & 20 & 20 & 35 & 35 \\ \hline
  $\# S$ & 12 & 18 & 25 & 32 & 55 \\ 
  \hline\hline
  \end{tabular}
\end{minipage}
\hspace{2cm}
\begin{minipage}[t]{0.3\textwidth}
  \begin{tabular}{p{0.6cm}|p{0.6cm}|p{0.6cm}}
    \hline\hline
   $m$ & 2 & 3  \\ \hline
   $\eta$ & 35 & 50 \\ \hline
   $\# S$ & 25 & 45  \\ 
    \hline\hline
  \end{tabular}
\end{minipage}
\caption{The $\eta$ and $\# S$ used in 2D and 3D examples.} 
\label{tab_pen_and_patch}
\end{table}

\begin{figure}[htp]
  \centering
  \begin{minipage}[t]{0.46\textwidth}
    \centering
    \begin{tikzpicture}[scale=2]
      \centering
      \draw[thick, black] (-1, -1) rectangle (1, 1);
      \draw[thick,red] (0,0) circle [radius = 0.5];
    \end{tikzpicture}
  \end{minipage}
  \begin{minipage}[t]{0.46\textwidth}
    \centering
    \begin{tikzpicture}[scale=2]
      \centering
      \input{./figure/mth.tex}
      \draw[thick,red] (0,0) circle [radius = 0.5];
    \end{tikzpicture}
  \end{minipage}
  \caption{The interface and the mesh in Example 1.}
  \label{fig_ex1_interface}
\end{figure}

\begin{figure}[htbp]
  \centering
  \includegraphics[width=0.30\textwidth]{./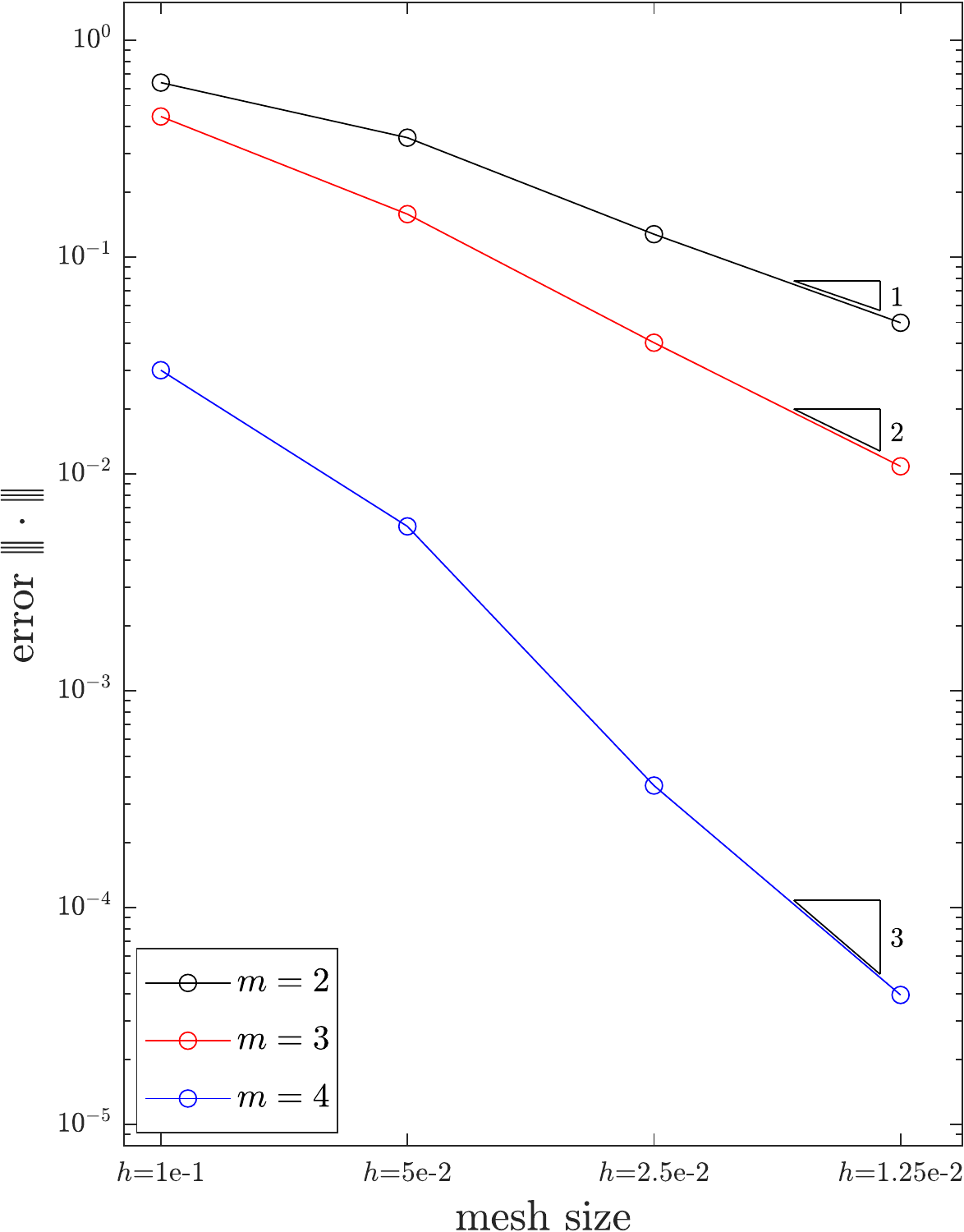}
  \hspace{30pt}
  \includegraphics[width=0.30\textwidth]{./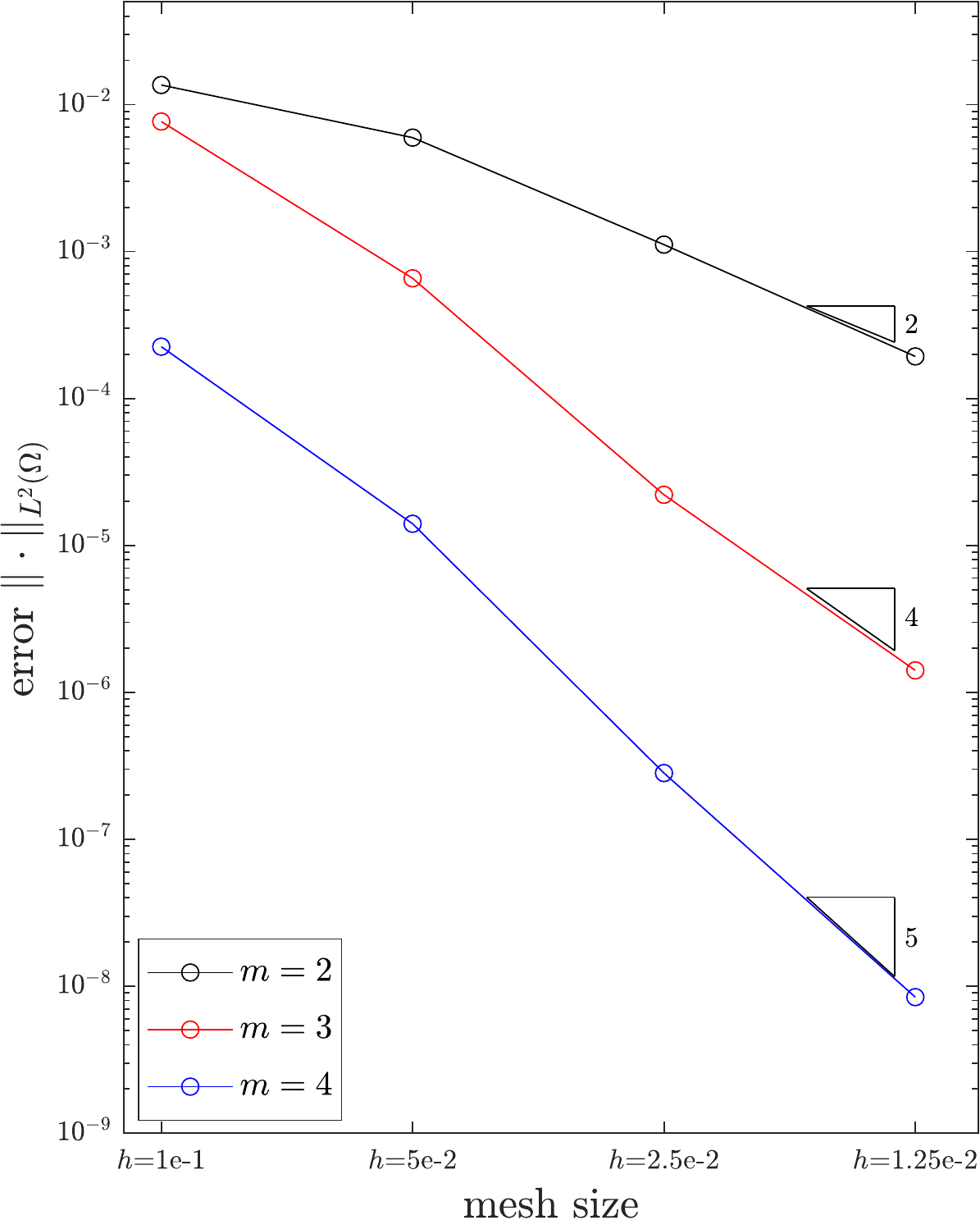}
  \caption{The convergence histories under the $\DGenorm{\cdot}$
  (left) and the $\| \cdot \|_{L^2(\Omega)}$ (right) in Example 1.}
  \label{fig_ex1err}
\end{figure}

\noindent \textbf{Example 2.}
We test the high-order approximation property in this example by 
solving a biharmonic interface problem on the squared domain $\Omega =
(-1,1)^2$. The interface is still given by \eqref{eq_ex1interface}.
The exact solution is taken as
\begin{displaymath}
  u(x,y) = \left\{
  \begin{aligned}
    &\sin^2(2x) \sin^2(2y), \quad \text{ in } \Omega_0,\\
    &\sin(2x) \sin(2y), \quad \text{ in } \Omega_1,
  \end{aligned}
  \right.
\end{displaymath}
The reconstruction order $m$ ranges from $2$ to $6$. And the $\Omega$
is partitioned into triangle mesh with size $h = 0.15, 0.075, 0.0375,
0.01875$. We display the numerical results in Fig.~\ref{fig_ex2err}. 
It can be seen that all the convergence rates are consistent with our
theoretical results and our method can achieve high-order accuracy by
applying high-order reconstruction.
\begin{figure}[htbp]
  \centering
  \includegraphics[width=0.30\textwidth]{./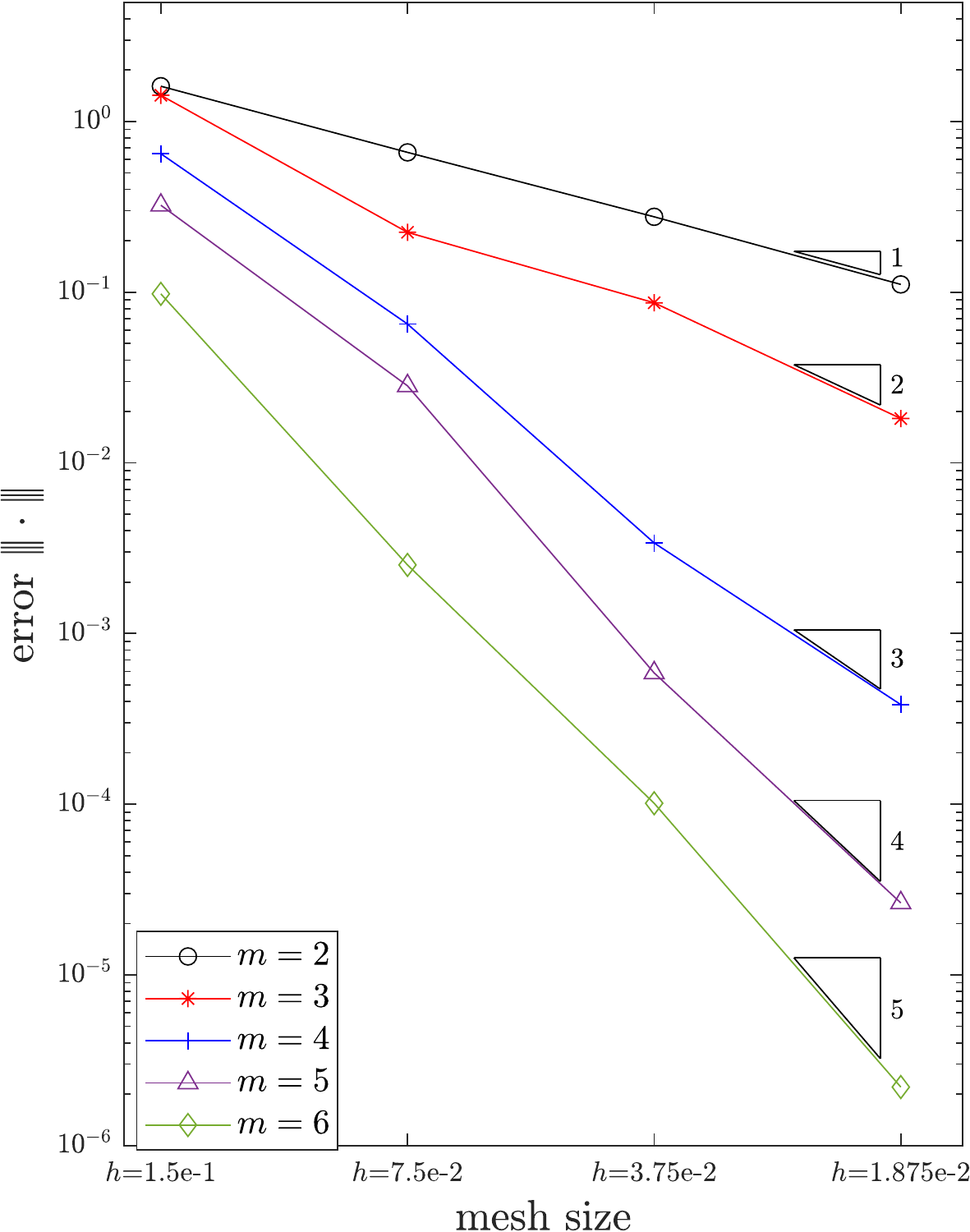}
  \hspace{30pt}
  \includegraphics[width=0.30\textwidth]{./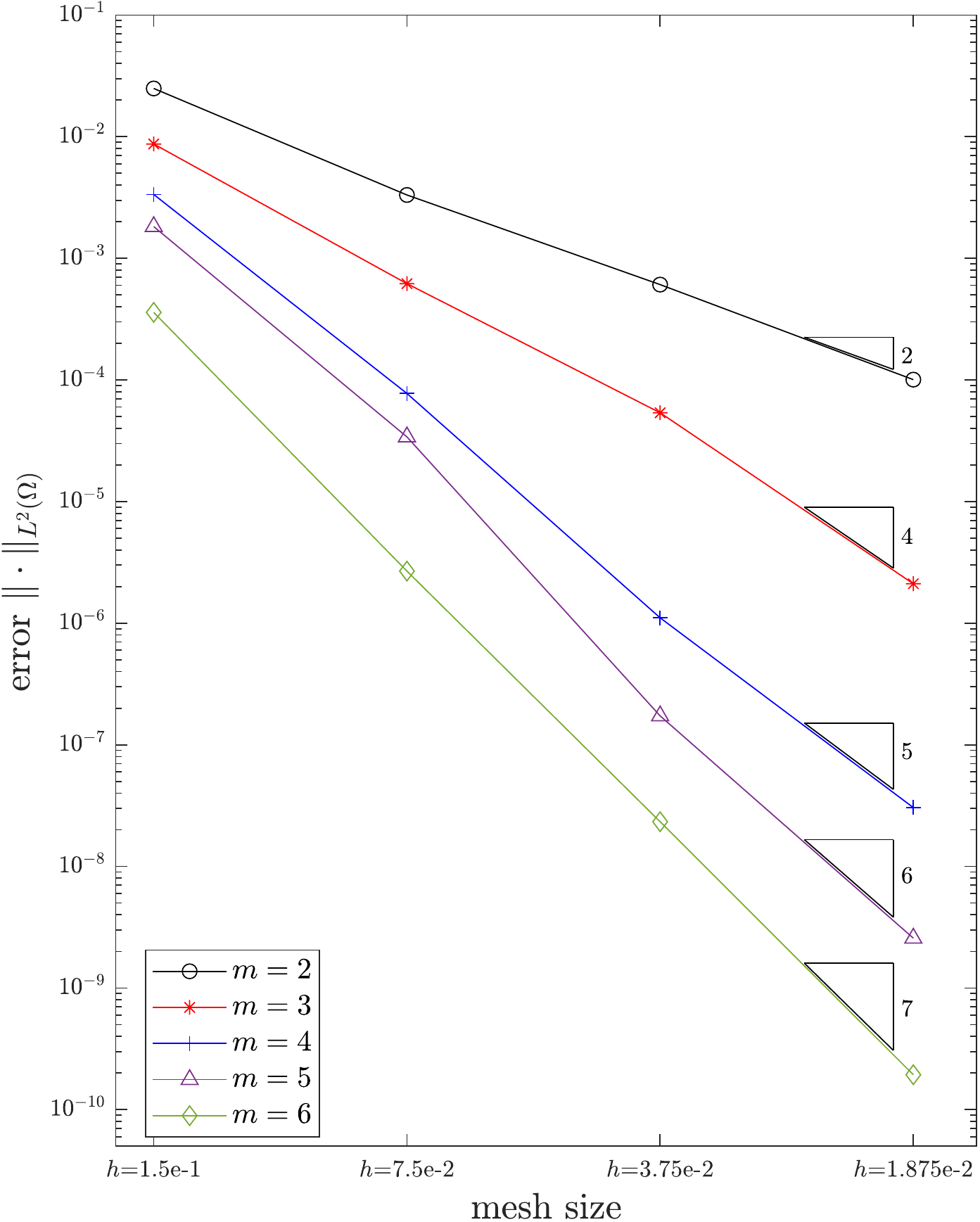}
  \caption{The convergence histories under the $\DGenorm{\cdot}$
  (left) and the $\| \cdot \|_{L^2(\Omega)}$ (right) in Example 2.}
  \label{fig_ex2err}
\end{figure}

\noindent \textbf{Example 3.}
In this example, we solve problem with a strong discontinuous
coefficient $\beta$. The problem is defined on the domain 
$\Omega = (-1,1)^2$ with an ellipse interface (see
Fig.~\ref{fig_ex3_interface}),
\begin{displaymath}
  \phi(x,y) = 2x^2 + 3y^2 - 1.
\end{displaymath}
The exact solution $u(x,y)$ and the coefficient $\beta$ is selected by
\begin{displaymath}
  u(x,y) = \left\{
  \begin{aligned}
    &\sin(2x^2 + y^2 + 2) + x, \quad (x,y) \in \Omega_0, \\
    & 0.1\cos(1-x^2-y^2), \quad (x,y) \in \Omega_1,
  \end{aligned}
  \right.
\end{displaymath}
\begin{displaymath}
  \beta = \left\{
  \begin{aligned}
    &1, \quad (x,y) \in \Omega_0, \\
    &100, \quad (x,y) \in \Omega_1.
  \end{aligned}
  \right.
\end{displaymath}
We have to use large penalty $\eta$ to handle the large jump in
$\beta$, see Tab.~\ref{tab_ex3_pen}. The numerical results presented 
in Fig.~\ref{fig_ex3err} demonstrates that the error $\DGenorm{u-u_h}$ 
and $\| u-u_h \|_{L^2(\Omega)}$ are still tends to zero with the rates 
we predicted in Theorem \eqref{th_errorestimate} and 
\eqref{th_L2errorestimate}. This example shows the robustness of the
proposed method.
\begin{table}[htp]
  \centering
  \begin{tabular}{p{0.6cm}|p{0.6cm}|p{0.6cm}|p{0.6cm}}
  \hline\hline
  $m$    & 2 & 3 & 4 \\ \hline
  $\eta$ & 50 & 100 & 300 \\ \hline
  $\# S$ & 12 & 18 & 25  \\ 
  \hline\hline
  \end{tabular}
\caption{The $\eta$ and $\# S$ used in Example 3..} 
\label{tab_ex3_pen}
\end{table}
\begin{figure}[htp]
  \centering
  \begin{minipage}[t]{0.46\textwidth}
    \centering
    \begin{tikzpicture}[scale=2]
      \centering
      \draw[thick, black] (-1, -1) rectangle (1, 1);
      \draw[thick, red, domain=0:360, samples=120] plot
      ({pow(2,-1/2)*cos(\x)}, {pow(3,-1/2)*sin(\x)});
    \end{tikzpicture}
  \end{minipage}
  \begin{minipage}[t]{0.46\textwidth}
    \centering
    \begin{tikzpicture}[scale=2]
      \centering
      \input{./figure/mth.tex}
      \draw[thick, red, domain=0:360, samples=120] plot
      ({pow(2,-1/2)*cos(\x)}, {pow(3,-1/2)*sin(\x)});
    \end{tikzpicture}
  \end{minipage}
  \caption{The interface and the mesh in Example 3.}
  \label{fig_ex3_interface}
\end{figure}
\begin{figure}[htbp]
  \centering
  \includegraphics[width=0.30\textwidth]{./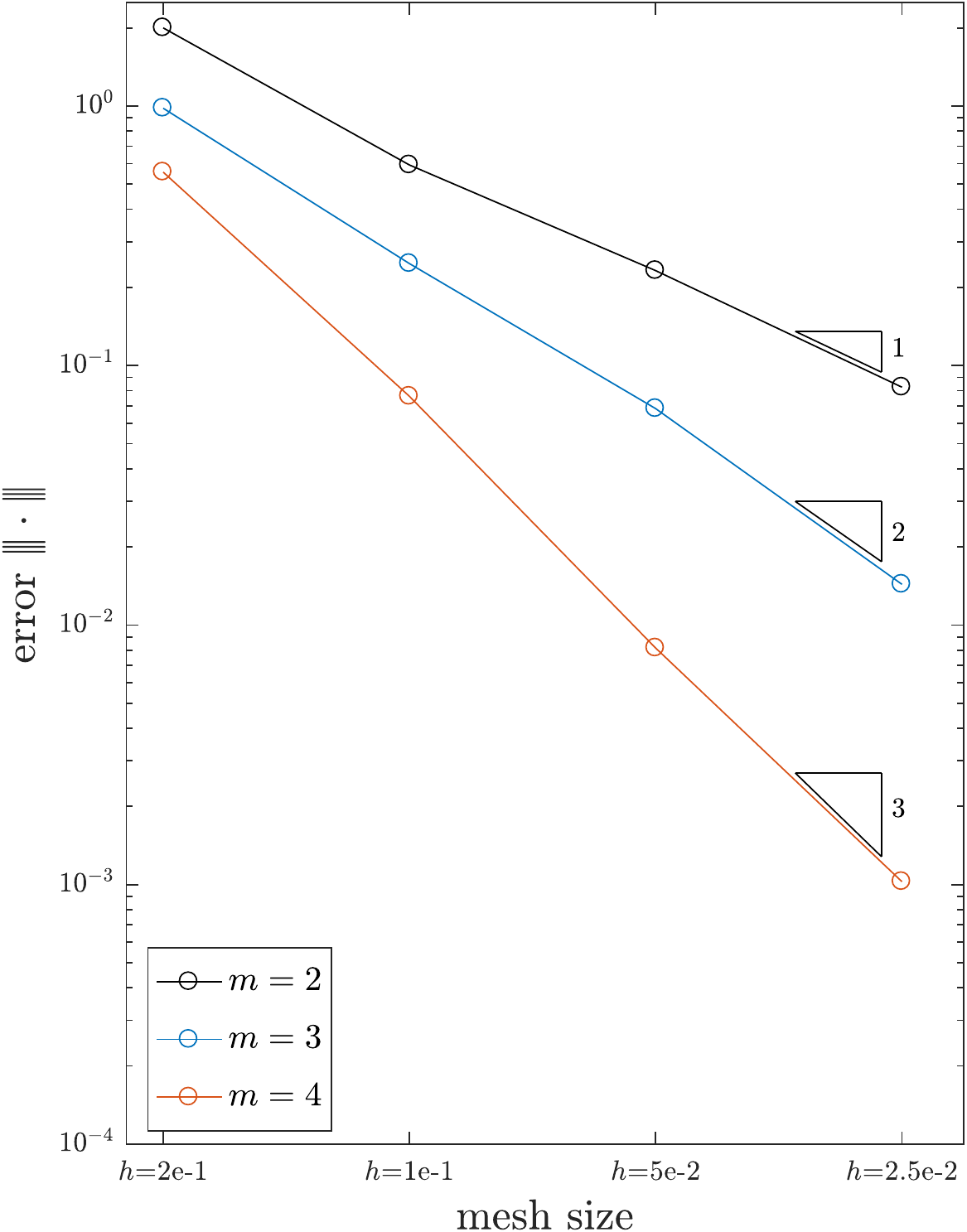}
  \hspace{30pt}
  \includegraphics[width=0.30\textwidth]{./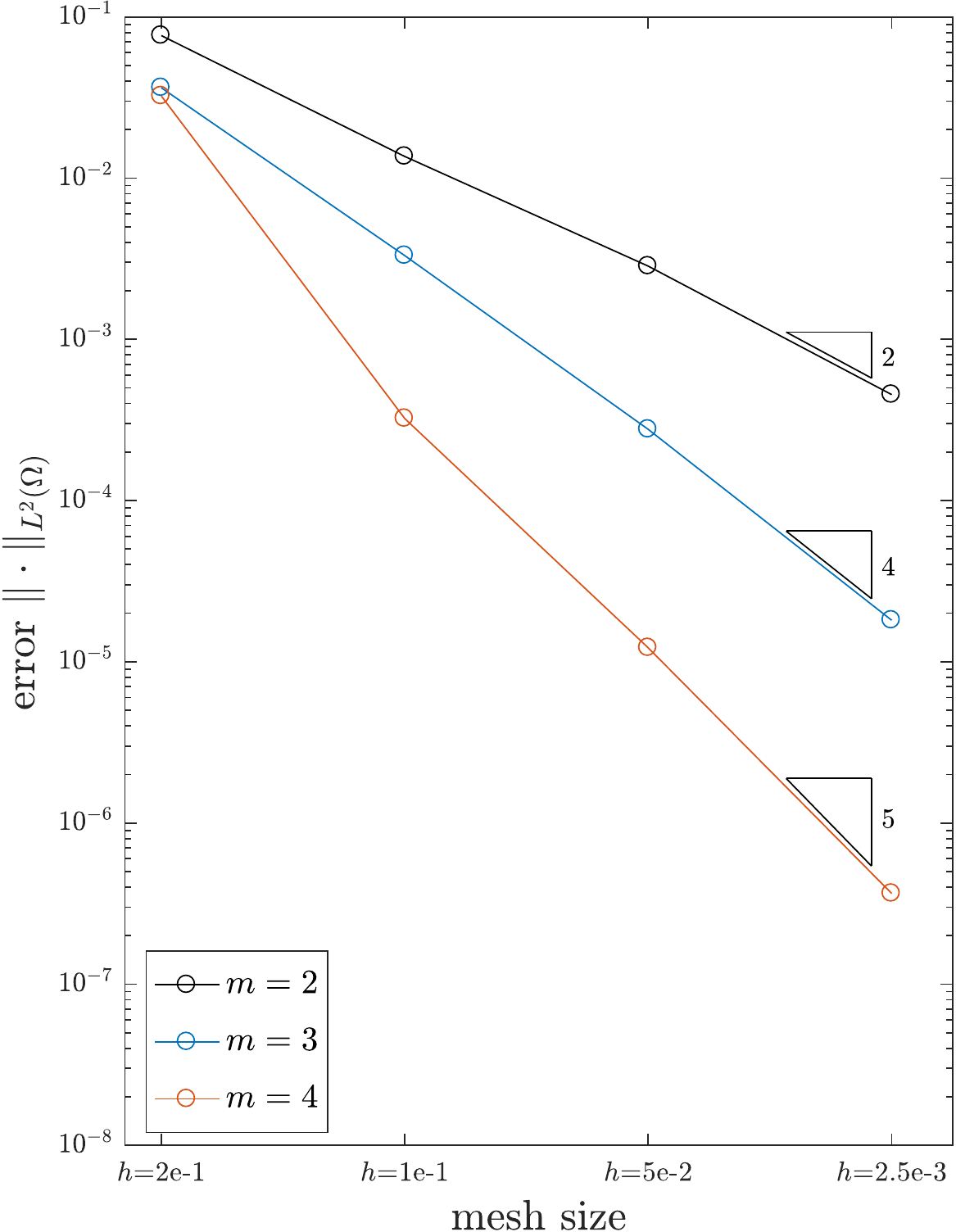}
  \caption{The convergence histories under the $\DGenorm{\cdot}$
  (left) and the $\| \cdot \|_{L^2(\Omega)}$ (right) in Example 3.}
  \label{fig_ex3err}
\end{figure}

\noindent \textbf{Example 4.}
In this example, we solve a problem on the domain $\Omega = (-1,1)^2$
with a five-pointed star shaped interface. The interface is given by
the polar coordinates (see Fig.~\ref{fig_ex4_interface}),
\begin{displaymath}
  r = \frac{1}{2} + \frac{\sin(5 \theta)}{7}.
\end{displaymath}
We choose the same analytical solution as Example 3 and the 
coefficient as
\begin{displaymath}
  \beta = \left\{
  \begin{aligned}
    &1, \quad (x,y) \in \Omega_0, \\
    &10, \quad (x,y) \in \Omega_1.
  \end{aligned}
  \right.
\end{displaymath}
We use the initial mesh size $h = 1/10$ and we successively refine 
the mesh three times for numerical tests. The convergence rates under
the energy norm and the $L^2$ norm are plotted in
Fig.~\ref{fig_ex3err}. We can observe from Fig.~\ref{fig_ex4err} that
the convergence rates under the $\DGenorm{\cdot}$ and the $\| \cdot
\|_{L^2(\Omega)}$ is $m-1$ and $m+1$ (except for the case $m=2$),
respectively, which is still consistent with the theoretical results.
\begin{figure}[htp]
  \centering
  \begin{minipage}[t]{0.46\textwidth}
    \centering
    \begin{tikzpicture}[scale=2]
      \centering
      \draw[thick, black] (-1, -1) rectangle (1, 1);
      \draw[thick, domain=0:360, red, samples=120] plot (\x:{(0.5 +
      sin(\x*5)/7)*1.02});
    \end{tikzpicture}
  \end{minipage}
  \begin{minipage}[t]{0.46\textwidth}
    \centering
    \begin{tikzpicture}[scale=2]
      \centering
      \input{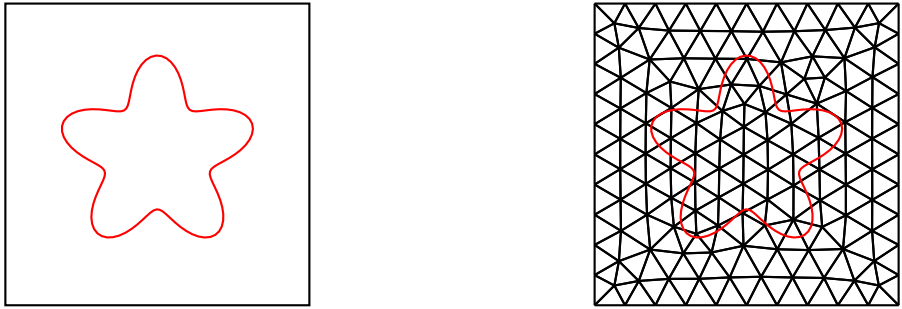}
      \draw[thick, domain=0:360, red, samples=120] plot (\x:{(0.5 +
      sin(\x*5)/7)*1.02});
    \end{tikzpicture}
  \end{minipage}
  \caption{The interface and the mesh in Example 4.}
  \label{fig_ex4_interface}
\end{figure}
\begin{figure}[htbp]
  \centering
  \includegraphics[width=0.30\textwidth]{./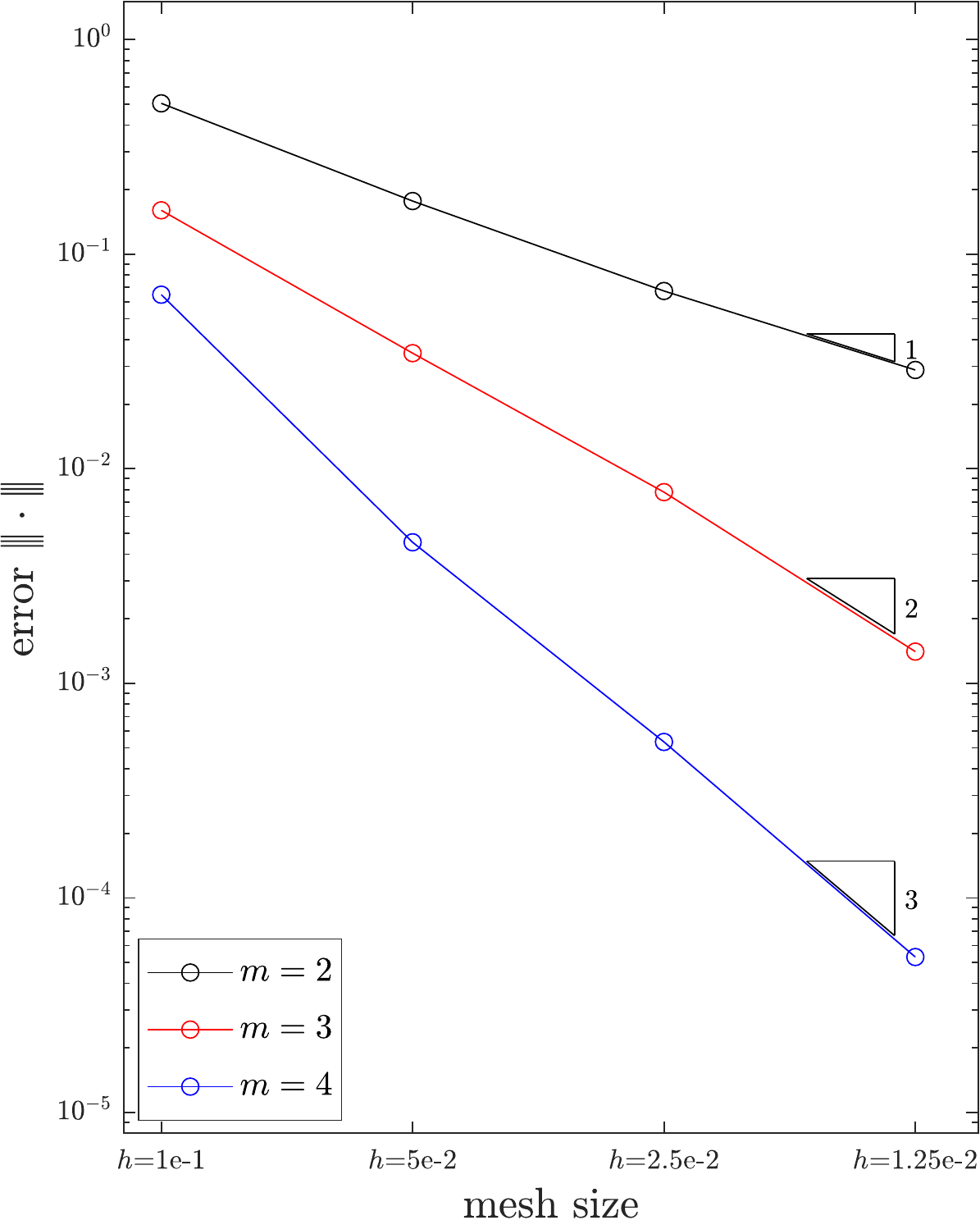}
  \hspace{30pt}
  \includegraphics[width=0.30\textwidth]{./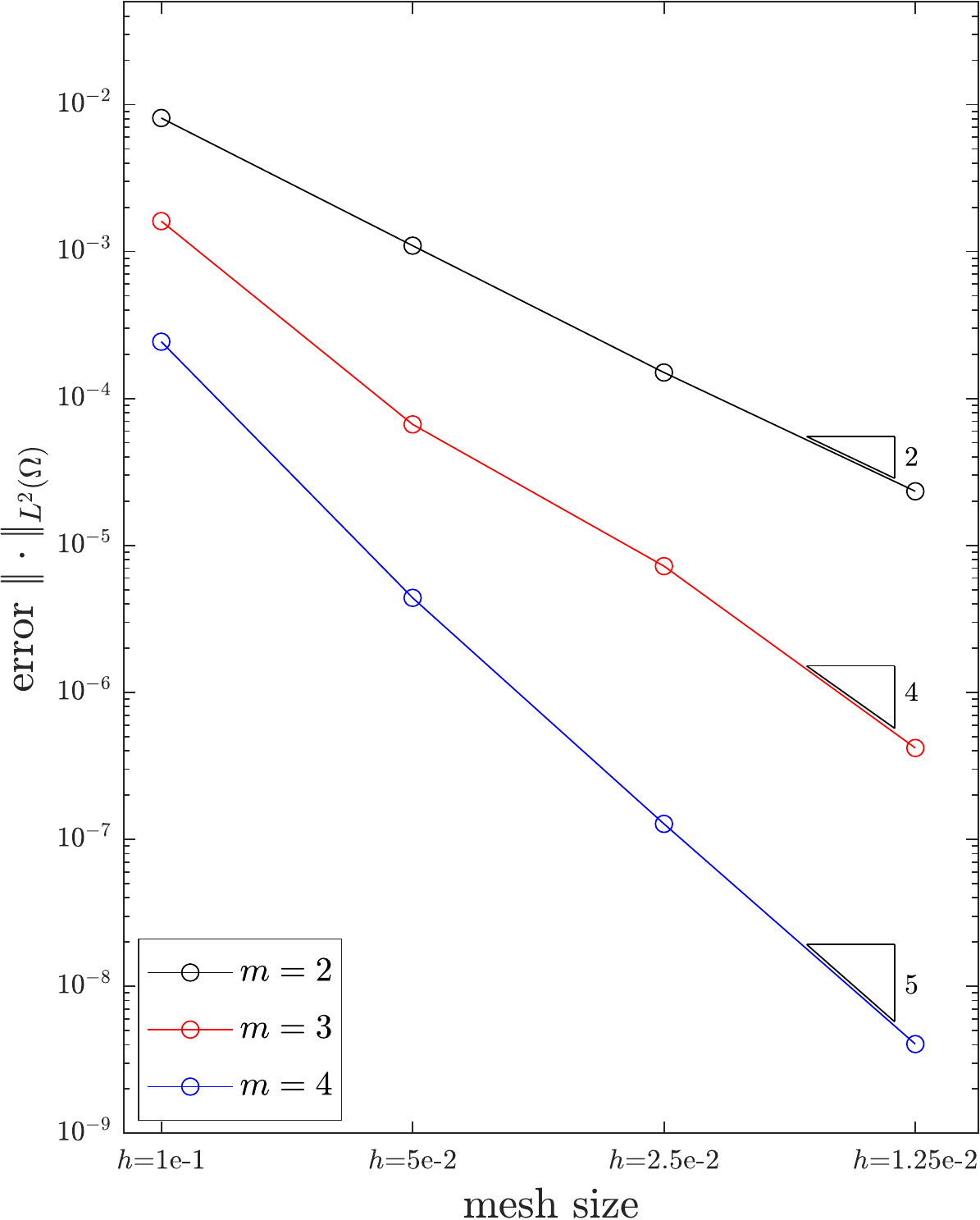}
  \caption{The convergence histories under the $\DGenorm{\cdot}$
  (left) and the $\| \cdot \|_{L^2(\Omega)}$ (right) in Example 4.}
  \label{fig_ex4err}
\end{figure}

\noindent \textbf{Example 5.}
We solve a three-dimensional biharmonic interface problem in this
case. The computation domain is an unit cube $\Omega = (0,1)^3$
containing a spherical interface in its interior (see
Fig.~\ref{fig_ex5_interface}),
\begin{displaymath}
  \phi(x,y,z) = (x-0.5)^2 + (y-0.5)^2 + (z-0.5)^2 - r^2, \quad r =
  0.35.
\end{displaymath}
We select the coefficient and the exact solution as
\begin{displaymath}
  \beta = \left\{
  \begin{aligned}
    &1, \quad \text{ inside } \Gamma, \\
    &2, \quad \text{ outside } \Gamma,
  \end{aligned}
  \right.
\end{displaymath}
\begin{displaymath}
  u(x,y,z) = \left\{
  \begin{aligned}
    &\cos(x^2 + y^2 + z^2), \quad \text{ inside } \Gamma, \\
    &\sin(x)\sin(y)\sin(z), \quad \text{ outside } \Gamma.
  \end{aligned}
  \right.
\end{displaymath}
We use the tetrahedra meshes generated by the Gmsh software
\cite{geuzaine2009gmsh}. We solve the problem on five different meshes
with the reconstruction order $m = 2,3$. The relationship between the
cubic root of degrees of freedom and errors is shown in
Fig.~\ref{fig_ex5err}, which is also clearly consistent with our
theoretical predictions.
\begin{figure}[htp]
  \centering
  \begin{minipage}[t]{0.46\textwidth}
    \centering
    \begin{tikzpicture}[scale=3]
      \centering
      \draw[very thin, dashed,black] (0,0,0) -- (1,0,0);
      \draw[very thin, dashed,black] (0,0,0) -- (0,1,0);
      \draw[very thin, dashed,black] (0,0,0) -- (0,0,1);
      \draw[very thin, black] (0,1,1) -- (1,1,1);
      \draw[very thin, black] (0,1,1) -- (0,1,0);
      \draw[very thin, black] (0,1,1) -- (0,0,1);
      \draw[very thin, black] (1,0,1) -- (0,0,1);
      \draw[very thin, black] (1,0,1) -- (1,1,1);
      \draw[very thin, black] (1,0,1) -- (1,0,0);
      \draw[very thin, black] (1,1,1) -- (1,1,0);
      \draw[very thin, black] (1,1,0) -- (1,0,0);
      \draw[very thin, black] (1,1,0) -- (0,1,0);
      \fill[ball color=red, opacity=0.6] (0.5,0.5,0.5) circle (0.35);
    \end{tikzpicture}
  \end{minipage}
  \hspace{-2cm}
  \begin{minipage}[t]{0.46\textwidth}
    \centering
    \begin{tikzpicture}[scale=3]
      \centering
      \input{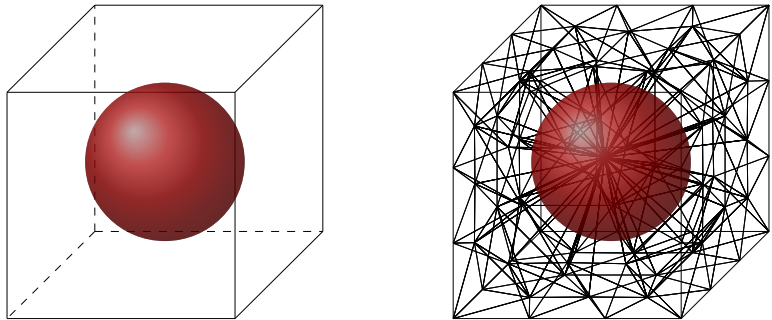}
      \fill[ball color=red, opacity=0.6] (0.5,0.5,0.5) circle (0.35);
    \end{tikzpicture}
  \end{minipage}
  \caption{The interface and the mesh in Example 5.}
  \label{fig_ex5_interface}
\end{figure}
\begin{figure}[htbp]
  \centering
  \includegraphics[width=0.30\textwidth]{./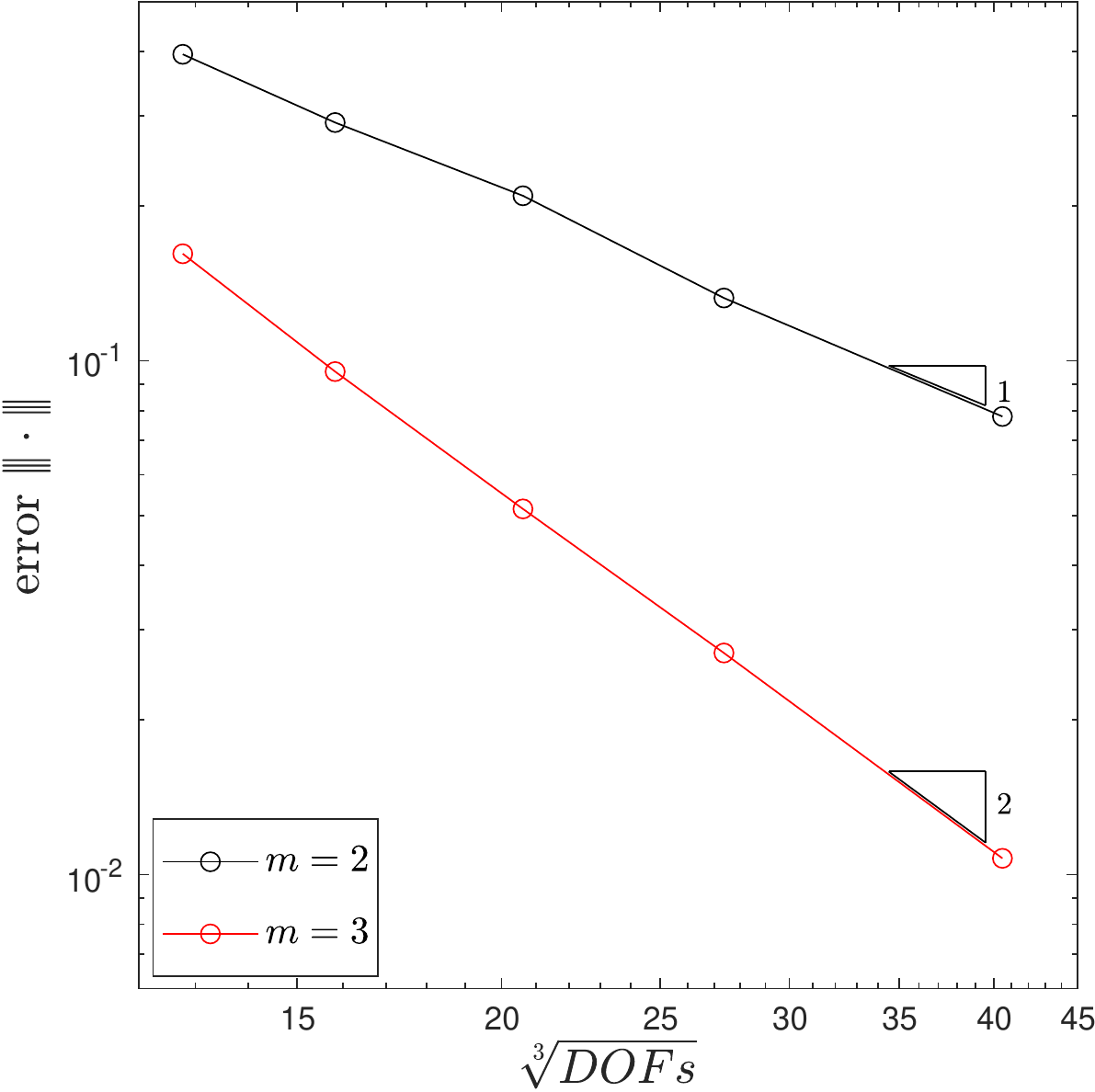}
  \hspace{30pt}
  \includegraphics[width=0.30\textwidth]{./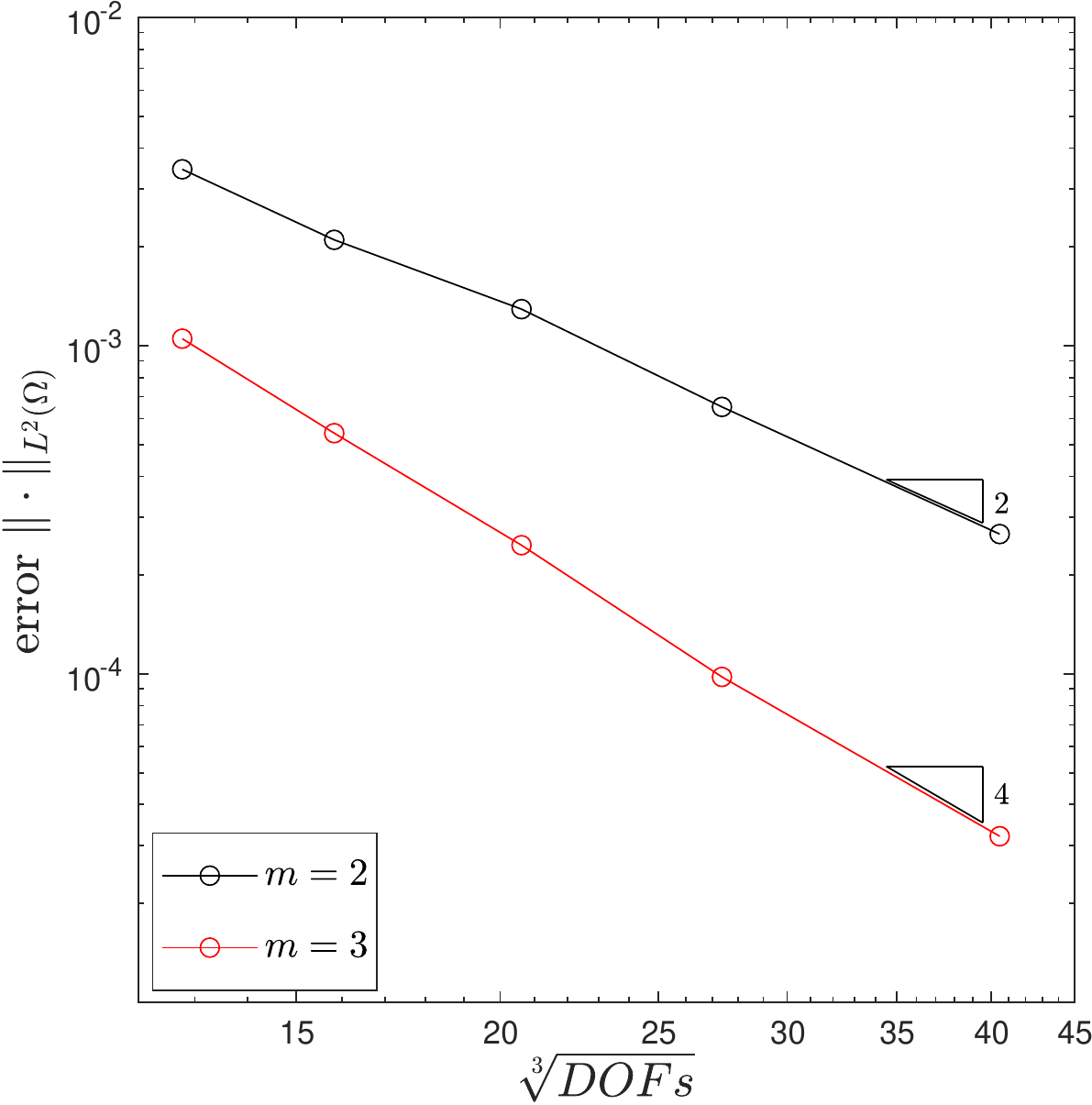}
  \caption{The convergence histories under the $\DGenorm{\cdot}$
  (left) and the $\| \cdot \|_{L^2(\Omega)}$ (right) in Example 5.}
  \label{fig_ex5err}
\end{figure}

\section{Conclusion}
\label{sec_conclusion}
In this paper, we propose an arbitrary order discontinuous Galerkin
extended finite element method for solving the biharmonic interface 
problem. The discrete formulation is obtained by a symmetric interior 
penalty method and the jump condition is enforced by Nitsche's trick
in a weak sense. The approximation space is constructed by a patch
reconstruction operator and the number of degrees of freedom is
independent of the approximation order. Our method is easily
implemented and can achieve high-order accuracy. It is shown the
optimal convergence rates for the numerical errors under the energy
norm and the $L^2$ norm. We present a series of numerical experiments
to verify the theoretical results and the efficiency of the proposed
method.

\bibliographystyle{amsplain}
\bibliography{../ref}

\providecommand{\bysame}{\leavevmode\hbox to3em{\hrulefill}\thinspace}
\providecommand{\MR}{\relax\ifhmode\unskip\space\fi MR }
\providecommand{\MRhref}[2]{%
  \href{http://www.ams.org/mathscinet-getitem?mr=#1}{#2}
}
\providecommand{\href}[2]{#2}
\begin{thebibliography}{10}

\bibitem{Adams2003sobolev}
R.~A. Adams and J.~J.~F. Fournier, \emph{{Sobolev Spaces}}, second ed., Pure
  and Applied Mathematics (Amsterdam), vol. 140, Elsevier/Academic Press,
  Amsterdam, 2003.

\bibitem{Blum1980boundary}
H.~Blum and R.~Rannacher, \emph{On the boundary value problem of the biharmonic
  operator on domains with angular corners}, Math. Methods Appl. Sci.
  \textbf{2} (1980), no.~4, 556--581.

\bibitem{Brenner2007mathematical}
S.~C. Brenner and L.~R. Scott, \emph{{The Mathematical Theory of Finite Element
  Methods}}, third ed., Texts in Applied Mathematics, vol.~15, Springer, New
  York, 2008.

\bibitem{Burman2010ghost}
E.~Burman, \emph{Ghost penalty}, C. R. Math. Acad. Sci. Paris \textbf{348}
  (2010), no.~21-22, 1217--1220.

\bibitem{Burman2021unfitted}
E.~Burman, M.~Cicuttin, G.~Delay, and A.~Ern, \emph{An unfitted hybrid
  high-order method with cell agglomeration for elliptic interface problems},
  SIAM J. Sci. Comput. \textbf{43} (2021), no.~2, A859--A882.

\bibitem{Burman2015cutfem}
E.~Burman, S.~Claus, P.~Hansbo, M.~G. Larson, and A.~Massing, \emph{Cut{FEM}:
  discregurkan2019stabilizedtizing geometry and partial differential
  equations}, Internat. J. Numer. Methods Engrg. \textbf{104} (2015), no.~7,
  472--501.

\bibitem{Burman2020cut}
E.~Burman, P.~Hansbo, and M.~Larson, \emph{Cut bogner-fox-schmit elements for
  plates}, Adv. Model. and Simul. in Eng. Sci. \textbf{7} (2020), no.~1, 27.

\bibitem{Burman2021cutfem}
E.~Burman, P.~Hansbo, and Larson. M., \emph{Cutfem based on extended finite
  element spaces},  (2021), Arxiv:2101.10052.

\bibitem{Cai2021Nitsche}
Y.~Cai, J.~Chen, and N.~Wang, \emph{A {N}itsche extended finite element method
  for the biharmonic interface problem}, Comput. Methods Appl. Mech. Engrg.
  \textbf{382} (2021), Paper No. 113880, 24. \MR{4251529}

\bibitem{Cai2023Nitsche}
\bysame, \emph{A {N}itsche mixed extended finite element method for the
  biharmonic interface problem}, Math. Comput. Simulation \textbf{203} (2023),
  112--130.

\bibitem{cockburn2009hybridizable}
B.~Cockburn, B.~Dong, and J.~Guzm\'an, \emph{A hybridizable and superconvergent
  discontinuous {G}alerkin method for biharmonic problems}, J. Sci. Comput.
  \textbf{40} (2009), no.~1-3, 141--187.

\bibitem{Dupont1980polynomial}
T.~Dupont and L.~R. Scott, \emph{Polynomial approximation of functions in
  {S}obolev spaces}, Math. Comp. \textbf{34} (1980), no.~150, 441--463.

\bibitem{geuzaine2009gmsh}
C.~Geuzaine and J.~F. Remacle, \emph{Gmsh: {A} 3-{D} finite element mesh
  generator with built-in pre- and post-processing facilities}, Internat. J.
  Numer. Methods Engrg. \textbf{79} (2009), no.~11, 1309--1331.

\bibitem{Gurkan2019stabilized}
C.~G\"{u}rkan and A.~Massing, \emph{A stabilized cut discontinuous {G}alerkin
  framework for elliptic boundary value and interface problems}, Comput.
  Methods Appl. Mech. Engrg. \textbf{348} (2019), 466--499.

\bibitem{Han2023interface}
Y.~Han, X.~Wang, and X.~Xie, \emph{An interface/boundary-unfitted e{X}tended
  {HDG} method for linear elasticity problems}, J. Sci. Comput. \textbf{94}
  (2023), no.~3, Paper No. 61, 29. \MR{4543179}

\bibitem{Hansbo2002unfittedFEM}
A.~Hansbo and P.~Hansbo, \emph{An unfitted finite element method, based on
  {N}itsche's method, for elliptic interface problems}, Comput. Methods Appl.
  Mech. Engrg. \textbf{191} (2002), no.~47-48, 5537--5552.

\bibitem{Hansbo:2002}
P.~Hansbo and M.~G. Larson, \emph{A discontinuous {G}alerkin method for the
  plate equation}, Calcolo \textbf{39} (2002), no.~1, 41--59.

\bibitem{Hansbo2014cut}
P.~Hansbo, M.~G. Larson, and S.~Zahedi, \emph{A cut finite element method for a
  {S}tokes interface problem}, Appl. Numer. Math. \textbf{85} (2014), 90--114.

\bibitem{Harari2012embeded}
I.~Harari and E.~Shavelzon, \emph{Embedded kinematic boundary conditions for
  thin plate bending by {N}itsche's approach}, Internat. J. Numer. Methods
  Engrg. \textbf{92} (2012), no.~1, 99--114. \MR{2979251}

\bibitem{He2022stabilized}
X.~He, F.~Song, and W.~Deng, \emph{A stabilized nonconforming {N}itsche's
  extended finite element method for {S}tokes interface problems}, Discrete
  Contin. Dyn. Syst. Ser. B \textbf{27} (2022), no.~5, 2849--2871. \MR{4405301}

\bibitem{Huang2017unfitted}
P.~Huang, H.~Wu, and Y.~Xiao, \emph{An unfitted interface penalty finite
  element method for elliptic interface problems}, Comput. Methods Appl. Mech.
  Engrg. \textbf{323} (2017), 439--460.

\bibitem{Hughes2000comparison}
T.~J.~R. Hughes, G.~Engel, L.~Mazzei, and M.~G. Larson, \emph{A comparison of
  discontinuous and continuous {G}alerkin methods based on error estimates,
  conservation, robustness and efficiency}, Discontinuous {G}alerkin methods
  ({N}ewport, {RI}, 1999), Lect. Notes Comput. Sci. Eng., vol.~11, Springer,
  Berlin, 2000, pp.~135--146.

\bibitem{Johansson2013high}
A.~Johansson and M.~G. Larson, \emph{A high order discontinuous {G}alerkin
  {N}itsche method for elliptic problems with fictitious boundary}, Numer.
  Math. \textbf{123} (2013), no.~4, 607--628.

\bibitem{Li2023reconstructed}
R.~Li, Q.~Liu, and F.~Yang, \emph{A reconstructed discontinuous approximation
  on unfitted meshes to {$H({\rm curl})$} and {$H({\rm div})$} interface
  problems}, Comput. Methods Appl. Mech. Engrg. \textbf{403} (2023), no.~part
  A, Paper No. 115723, 27.

\bibitem{Li2019biharmonic}
R.~Li, P.~Ming, Z.~Sun, F.~Yang, and Z.~Yang, \emph{A discontinuous {G}alerkin
  method by patch reconstruction for biharmonic problem}, J. Comput. Math.
  \textbf{37} (2019), no.~4, 524--540, [Paging previously given as 563--580].

\bibitem{Li2016discontinuous}
R.~Li, P.~Ming, Z.~Sun, and Z.~Yang, \emph{An arbitrary-order discontinuous
  {G}alerkin method with one unknown per element}, J. Sci. Comput. \textbf{80}
  (2019), no.~1, 268--288.

\bibitem{Li2012efficient}
R.~Li, P.~Ming, and F.~Tang, \emph{An efficient high order heterogeneous
  multiscale method for elliptic problems}, Multiscale Model. Simul.
  \textbf{10} (2012), no.~1, 259--283.

\bibitem{Li2019reconstructed}
R.~Li and F.~Yang, \emph{A reconstructed discontinuous approximation to
  {M}onge-{A}mpere equation in least squares formation}, arXiv:1912.05822
  (2019).

\bibitem{Li2020interface}
\bysame, \emph{A discontinuous {G}alerkin method by patch reconstruction for
  elliptic interface problem on unfitted mesh}, SIAM J. Sci. Comput.
  \textbf{42} (2020), no.~2, A1428--A1457.

\bibitem{Li2019sequential}
\bysame, \emph{A sequential least squares method for {P}oisson equation using a
  patch reconstructed space}, SIAM J. Numer. Anal. \textbf{58} (2020), no.~1,
  353--374.

\bibitem{Lin2011immersed}
T.~Lin, Y.~Lin, W.-W. Sun, and Z.~Wang, \emph{Immersed finite element methods
  for 4th order differential equations}, J. Comput. Appl. Math. \textbf{235}
  (2011), no.~13, 3953--3964. \MR{2794188}

\bibitem{Liu2020interface}
H.~Liu, L.~Zhang, X.~Zhang, and W.~Zheng, \emph{Interface-penalty finite
  element methods for interface problems in {$H^1$}, {${\bold {H} {\bold (\rm
  curl)}}$}, and {$\bold {H}(\rm div)$}}, Comput. Methods Appl. Mech. Engrg.
  \textbf{367} (2020), 113137, 16.

\bibitem{Liu2023PNPB}
J.~Liu, Y.~Tang, and Y.~Zhao, \emph{On the equilibrium of the
  {P}oisson-{N}ernst-{P}lanck-{B}ikermann model equipping with the steric and
  correlation effects}, Commun. Math. Sci. \textbf{21} (2023), no.~2, 485--515.
  \MR{4545836}

\bibitem{mozolevski2003priori}
I.~Mozolevski and E.~S\"uli, \emph{A priori error analysis for the
  {$hp$}-version of the discontinuous {G}alerkin finite element method for the
  biharmonic equation}, Comput. Methods Appl. Math. \textbf{3} (2003), no.~4,
  596--607.

\bibitem{Mozolevski2007hp}
I.~Mozolevski, E.~S\"uli, and P.~R. B\"osing, \emph{{$hp$}-version a priori
  error analysis of interior penalty discontinuous {G}alerkin finite element
  approximations to the biharmonic equation}, J. Sci. Comput. \textbf{30}
  (2007), no.~3, 465--491.

\bibitem{Nicaise1994biharmonic}
S.~Nicaise, \emph{Polygonal interface problems for the biharmonic operator},
  Math. Methods Appl. Sci. \textbf{17} (1994), no.~1, 21--39. \MR{1257586}

\bibitem{Wu2012unfitted}
H.~Wu and Y.~Xiao, \emph{An unfitted $hp$-interface penalty finite element
  method for elliptic interface problems}, J. Comput. Math. \textbf{37} (2019),
  no.~3, 316--339.

\bibitem{Yang2022an}
F.~Yang and X.~Xie, \emph{An unfitted finite element method by direct extension
  for elliptic problems on domains with curved boundaries and interfaces}, J.
  Sci. Comput. \textbf{93} (2022), no.~3, Paper No. 75, 26.

\bibitem{Zhang2009parallel}
L.~Zhang, \emph{A parallel algorithm for adaptive local refinement of
  tetrahedral meshes using bisection}, Numer. Math. Theory Methods Appl.
  \textbf{2} (2009), no.~1, 65--89.

\bibitem{ZienkiewiczTaylorZhu:2015}
O.~C. Zienkiewicz, R.~L. Taylor, and D.~D. Fox, \emph{The {F}inite {E}lement
  {M}ethod for {S}olid and {S}tructural {M}echanics}, seventh ed.,
  Elsevier/Butterworth Heinemann, Amsterdam, 2014.

\end{thebibliography}

\end{document}